\documentclass[12pt,letterpaper]{article}
\IfFileExists{lmodern.sty}{%
  \usepackage[T1]{fontenc}
  \usepackage{lmodern}
}{}
\IfFileExists{geometry.sty}{%
  \usepackage[margin=1in]{geometry}
}{%
  \setlength{\textwidth}{6.5in}
  \setlength{\textheight}{9in}
  \setlength{\oddsidemargin}{0pt}
  \setlength{\evensidemargin}{0pt}
  \setlength{\topmargin}{0pt}
  \addtolength{\topmargin}{-\headheight}
  \addtolength{\topmargin}{-\headsep}
}
\usepackage{amsmath,amssymb,amsthm}
\IfFileExists{enumitem.sty}{%
  \usepackage{enumitem}
  \setlist[enumerate,1]{label=(\roman*),leftmargin=*}
}{%
  
}
\IfFileExists{mathrsfs.sty}{\usepackage{mathrsfs}}{%
  \providecommand{\mathscr}[1]{\mathcal{#1}}
}
\IfFileExists{titlesec.sty}{\usepackage{titlesec}}{}
\IfFileExists{microtype.sty}{%
  \IfFileExists{lmodern.sty}{\usepackage{microtype}}{%
    \usepackage[expansion=false]{microtype}
  }
}{}
\IfFileExists{hyperref.sty}{\usepackage[colorlinks=true,linkcolor=blue,citecolor=blue,urlcolor=blue]{hyperref}}{%
  \providecommand{\href}[2]{#2}
  \providecommand{\texorpdfstring}[2]{#1}
  \providecommand{\hypersetup}[1]{}
}
\numberwithin{equation}{section}
\newtheorem{theorem}{Theorem}[section]
\newtheorem{lemma}[theorem]{Lemma}
\newtheorem{proposition}[theorem]{Proposition}
\newtheorem{corollary}[theorem]{Corollary}
\theoremstyle{definition}

\theoremstyle{remark}
\newtheorem{remark}[theorem]{Remark}
\numberwithin{figure}{section}
\numberwithin{table}{section}
\newcommand{\R}{\mathbb{R}}
\newcommand{\N}{\mathbb{N}}
\newcommand{\D}{D^{N,q}(\R^N)}
\newcommand{\norm}[1]{\lVert#1\rVert}
\newcommand{\Ph}{\Phi_{N,q,\beta}}
\newcommand{\AMTC}{\mathrm{AMTC}}
\newcommand{\AMTSC}{\mathrm{AMTSC}}
\newcommand{\J}{\mathcal{J}}
\newcommand{\Sa}{\mathcal{S}}
\newcommand{\dd}{\,\mathrm{d}}
\newcommand{\e}{\mathrm{e}}
\DeclareMathOperator{\diver}{div}
\DeclareMathOperator{\Hess}{Hess}
\allowdisplaybreaks[2]
\hypersetup{pdftitle={Extremals and Thresholds for Critical Singular Anisotropic Moser--Trudinger Inequalities},pdfauthor={Kaiwen Guo, Yanjun Liu}}
\title{Extremals and Thresholds for Critical Singular\\
Anisotropic Moser--Trudinger Inequalities}
\author{Kaiwen Guo\textsuperscript{a}, Yanjun Liu\textsuperscript{b,*}\\[1ex]
\small \textsuperscript{a}Mathematical Research Center, Shandong University,\\
\small Jinan, 250100, P.~R.~China\\
\small \textsuperscript{b}School of Mathematical Sciences, Chongqing Normal University,\\
\small Chongqing, 401331, P.~R.~China}
\date{}
\begin{document}
\maketitle
\begingroup
\renewcommand{\thefootnote}{\fnsymbol{footnote}}
\footnotetext{Email addresses: \texttt{kwguo@mail.nankai.edu.cn} (Kaiwen Guo).\\

\raggedright Corresponding author:\texttt{liuyj@mail.nankai.edu.cn} (Yanjun Liu).}
\endgroup
\begin{abstract}
Let $N\ge2$, $q>1$, and $0<\beta<N$, we study extremals for the critical
singular anisotropic Moser--Trudinger functional under
\[
 \norm{F(\nabla u)}_N^a+\norm{u}_q^b\le1,\qquad a>0,\qquad 0<b\le N.
\]
For $b<N$, the supremum is attained. At $b=N$, put
\[
 q_-:=\frac{N^2(N-2)}{(N-1)(N-\beta)},\qquad q_+:=\frac{N^2}{N-\beta}.
\]
For $q<q_-$, the supremum is attained for every $a>0$. For $q_-\le q<q_+$,
there exists $a_c\in(N,\infty]$ such that the supremum is attained for
$0<a<a_c$ and is not attained for $a>a_c$ if $a_c<\infty$. The argument
does not determine attainment at $a=a_c$. Radial blow-up analysis gives a
concentration bound in terms of a nonlinear Green function. The first
nonzero Taylor term determines the strict comparison, an exact Euclidean
reduction shows that the threshold is independent of $F$, a separate supplement gives an alternative proof for $q=N$
using the classical Green function and compact Taylor terms.
\end{abstract}
\noindent\textbf{Keywords:} Moser--Trudinger inequality, 
singular weight, extremal function, concentration--compactness, Finsler
$N$-Laplacian, threshold phenomenon

\medskip\noindent\textbf{2020 MSC:} 46E35, 35J60, 35B33, 26D10

\section{Introduction and main results}\label{sec:1}
The Moser--Trudinger inequality describes the limiting Sobolev embedding
when the integrability exponent of the gradient equals the dimension.
For a bounded smooth domain $\Omega\subset\R^N$, $N\ge2$, the space
$W_0^{1,N}(\Omega)$ embeds into every finite-power Lebesgue space, but not
into $L^\infty(\Omega)$. Trudinger \cite{refTrudinger1967} established an
exponential Orlicz-space embedding, and Moser \cite{ref1} identified its
sharp form:
\[
 \sup_{\substack{u\in W_0^{1,N}(\Omega)\\\norm{\nabla u}_N\le1}}
 \int_\Omega \exp\left(\alpha_N|u|^{\frac{N}{N-1}}\right)\dd x
 \le C_N|\Omega|,
 \qquad
 \alpha_N=N^{\frac{N}{N-1}}\omega_N^{\frac1{N-1}},
\]
where $\omega_N$ is the volume of the Euclidean unit ball. The constant
$\alpha_N$ cannot be increased. This exponential growth is the natural
critical growth for variational problems involving the $N$-Laplacian.

Sharpness of the exponential constant and existence of a maximizer are
distinct questions. At the critical exponent, bounded sequences may
concentrate, so weak convergence alone does not imply convergence of the
exponential integral. Carleson and Chang \cite{refCarlesonChang1986}
proved attainment on the unit ball, and Flucher \cite{refFlucher1992}
established existence on general bounded smooth planar domains by
analyzing concentration through Green functions. These works explain
the role of a strict comparison between the supremum and the largest
value attainable by concentrating sequences, a principle that also
guides the present argument.

On the entire space, one must additionally control the behavior at
infinity and the effect of spatial dilations. Adachi and Tanaka
\cite{refAdachiTanaka2000} established sharp subcritical inequalities
normalized by the $L^N$ norm. At the critical constant, Ruf
\cite{refRuf2005} in dimension two and Li and Ruf \cite{refLiRuf2008}
in higher dimensions obtained sharp inequalities under the full Sobolev
constraint $\norm{\nabla u}_N^N+\norm{u}_N^N\le1$, with the exponential
replaced by its standard Taylor remainder to ensure integrability on
$\R^N$. Thus the form of the constraint is part of the critical problem,
rather than merely a choice of normalization.

Singular weights introduce a further interaction between concentration
and the geometry of the functional. Adimurthi and Sandeep
\cite{refAdimurthiSandeep2007} proved a singular Moser--Trudinger embedding
on bounded domains containing the origin: for the weight $|x|^{-\beta}$,
$0<\beta<N$, the sharp exponential constant becomes
$(1-\beta/N)\alpha_N$. Adimurthi and Yang \cite{ref2} developed a singular
whole-space inequality, and Li and Yang \cite{ref3} proved existence of
extremal functions for singular inequalities in the entire Euclidean
space. These results show why the singularity and the behavior at
infinity must both be included in a critical compactness argument.

For anisotropic energies, the Euclidean Dirichlet norm is replaced by
$\norm{F(\nabla u)}_N$, and Wulff balls take the place of Euclidean balls.
The convex symmetrization of Alvino, Ferone, Trombetti and Lions
\cite{ref4} provides a basic tool for this passage. Wang and Xia
\cite{refWangXia2012} established a sharp anisotropic Moser--Trudinger
inequality and developed blow-up analysis for a two-dimensional
Finsler--Liouville equation. Zhou and Zhou subsequently obtained
extremal functions on bounded domains \cite{ref5}, and established
concentration--compactness and extremal existence under the full
anisotropic Sobolev norm on unbounded domains \cite{ref6}.

Another development concerns inequalities with exact growth, which
balance the critical exponential by an appropriate denominator.
Masmoudi and Sani \cite{refMasmoudiSani2015} established such estimates in
the Euclidean whole space. Liu \cite{refLiu2022} extended this theory
to anisotropic energies, obtaining the sharp exponential constant and
the optimal power $N/(N-1)$ in the denominator, together with existence
and nonexistence results for the associated maximization problems.
Liu also showed how the exact-growth inequality implies a whole-space
inequality under the full anisotropic Sobolev norm. This work connects
anisotropic sharp estimates with the variational question of attainment;
the singular mixed-norm constraint considered here requires an additional
analysis of its critical concentration level.

For singular anisotropic functionals, Liu \cite{ref7} established a
concentration--compactness principle, while Lu, Shen, Xue and Zhu
\cite{ref10} developed weighted anisotropic isoperimetric inequalities
and proved existence of extremals for singular anisotropic
Moser--Trudinger inequalities. The dependence of attainment on the
constraint exponents is already significant in the Euclidean singular
case, as shown by Nguyen's threshold results \cite{ref11}. The present
paper studies this dependence for the anisotropic mixed-norm family:
the questions are whether the critical supremum is attained and how
the first retained Taylor term affects the threshold.

To formulate the problem, let $F:\R^N\to[0,\infty)$ be even, convex and positively homogeneous of
degree one. We assume that $F\in C^2(\R^N\setminus\{0\})$, $F(\xi)>0$ for
$\xi\ne0$, and that $\Hess(F^2)$ is positive definite away from the origin.
Its polar norm is
\[
 F^{o}(x):=\sup_{\xi\ne0}\frac{x\cdot\xi}{F(\xi)}.
\]
For $r>0$ let
\[
 W_r:=\{x\in\R^N:F^{o}(x)<r\},\qquad \kappa_N:=|W_1|,\qquad K_N:=N\kappa_N.
\]
The sharp anisotropic Moser--Trudinger constant is
\[
 \lambda_N=N^{\frac{N}{N-1}}\kappa_N^{\frac1{N-1}}=N K_N^{\frac1{N-1}}.
\]
We follow the notation of \cite{ref8,ref9} for $F^{o}$, $W_r$,
$\kappa_N$, $\lambda_N$, $\D$, and $\Ph$. For the calculations below,
we introduce the abbreviations
\begin{equation}\label{eq:1}
 \delta:=1-\frac{\beta}{N}=\frac{N-\beta}{N},
 \qquad \lambda_\beta:=\delta\lambda_N.
\end{equation}
We also write $H_{N-1}:=\sum_{j=1}^{N-1}j^{-1}$ for the harmonic sum.
For nonnegative quantities $A$ and $B$, we write $A\simeq B$ if
$cB\le A\le CB$ for some constants $c,C>0$ independent of the varying
parameter, in the range under consideration. This notation does not
assert that $A/B\to1$.

Following Guo and Liu \cite{ref8}, let $\D$ denote the completion of
$C_c^\infty(\R^N)$ with respect to the norm
\[
 u\longmapsto\norm{\nabla u}_N+\norm{u}_q.
\]
Since $F$ is equivalent to the Euclidean norm, this is the same space
obtained from the equivalent norm $\norm{F(\nabla u)}_N+\norm{u}_q$.
For $q>1$ and $0<\beta<N$, define
\begin{equation}\label{eq:2}
 m=m(N,q,\beta):=\min\left\{j\in\N:j>\frac{q(N-1)}{N}\delta\right\}
\end{equation}
and
\begin{equation}\label{eq:3}
 \Ph(s):=\sum_{j=m}^\infty\frac{s^j}{j!},\qquad s\ge0.
\end{equation}
The strict inequality in \eqref{eq:2} gives
\begin{equation}\label{eq:4}
 \frac{mN}{N-1}>q\delta.
\end{equation}
Following \cite{ref8,ref9}, the critical mixed-norm supremum is denoted by
\begin{equation}\label{eq:5}
 \AMTC_{a,b}(N,q,\beta):=
 \sup_{\norm{F(\nabla u)}_N^a+\norm{u}_q^b\le1}
 \int_{\R^N}\frac{\Ph(\lambda_\beta|u|^{\frac{N}{N-1}})}{F^{o}(x)^\beta}\dd x.
\end{equation}
We denote the integral in \eqref{eq:5} by $\J(u)$. In the notation
of \cite{ref9}, $\AMTC_{a,b}(N,q,\beta)$ is the critical endpoint
$\Lambda_{a,b}(N,q,\lambda_N,\beta)$ of the mixed-norm family.
Guo and Liu proved that \eqref{eq:5} is finite precisely for $b\le N$ and
established an exact relation between the critical and subcritical suprema
\cite{ref8}. In a subsequent paper they proved existence and Wulff symmetry
of subcritical maximizers \cite{ref9}. Together, these results reduce critical
attainment to the endpoint behavior of a scalar variational function.

At $b=N$, the first nonzero Taylor term of $\Ph$ determines the two ranges
in our attainment result. Set
\begin{equation}\label{eq:6}
 q_-:=\frac{N^2(N-2)}{(N-1)(N-\beta)},\qquad q_+:=\frac{N^2}{N-\beta}.
\end{equation}
The three ranges are characterized by
\begin{equation}\label{eq:7}
\begin{aligned}
 q<q_-&\iff m\le N-2,\\
 q_-\le q<q_+&\iff m=N-1,\\
 q\ge q_+&\iff m\ge N.
\end{aligned}
\end{equation}
The strict inequality in \eqref{eq:2} places $q=q_-$ in the range $m=N-1$.

Liang and Xiong \cite[Theorem 1.2]{ref12} established a threshold theory
for $q\ge q_+$, subject to an additional quantitative hypothesis on the
anisotropy and a Gagliardo--Nirenberg constant. We study the complementary
range $1<q<q_+$. The case $m=N-1$ requires a first-order comparison:
the leading Taylor contribution and the normalization error have the same
logarithmic order, so the coefficient depending on $a$ must be retained.

We state the three parameter regimes separately. The assumptions on $F$
made above remain in force.
\begin{theorem}[Attainment for $0<b<N$]\label{thm:1.1}
Let $N\ge2$, $q>1$, $0<\beta<N$, $a>0$, and $0<b<N$.
Then $\AMTC_{a,b}(N,q,\beta)$ is attained by a nonnegative
Wulff-symmetric function.
\end{theorem}

\begin{theorem}[Attainment for all $a>0$ when $q<q_-$]\label{thm:1.2}
Let $N\ge2$, $0<\beta<N$, and $1<q<q_-$. For $b=N$,
the supremum $\AMTC_{a,N}(N,q,\beta)$ is attained for every $a>0$.
A maximizer may be chosen nonnegative and Wulff symmetric.
\end{theorem}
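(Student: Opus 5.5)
We will argue through a strict comparison between the supremum and the concentration level. By the results of Guo and Liu \cite{ref8,ref9}, the critical supremum can be expressed through subcritical suprema. Subcritical maximizers exist and may be taken nonnegative and Wulff symmetric. Concretely, fix $a>0$, $b=N$, and take maximizers $u_k$ of the subcritical problems with exponent $\lambda_k\uparrow\lambda_\beta$. Each $u_k$ is nonnegative, Wulff symmetric, and saturates the constraint $\norm{F(\nabla u_k)}_N^a+\norm{u_k}_q^N=1$.

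If $c_k:=u_k(0)=\max u_k$ stays bounded, the radial lemma together with $\norm{u_k}_q\le1$ gives uniform decay at infinity. Elliptic estimates for the Euler--Lagrange equation (a Finsler $N$-Laplacian with singular weight) then yield $u_k\to u$ locally uniformly and strongly in the energy. Passing to the limit via the exact relation of \cite{ref8} shows that $u$ is a maximizer. The tail is controlled because $\Ph(s)\lesssim s^m$ for bounded $s$ and $mN/(N-1)>q\delta$ by \eqref{eq:4}, so $\int |u|^{mN/(N-1)}F^o(x)^{-\beta}$ is controlled by radial decay and the $L^q$ norm. It therefore suffices to exclude blow-up $c_k\to\infty$.

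Assuming $c_k\to\infty$, we perform radial blow-up analysis. Rescaling at scale $r_k$ defined by $r_k^{N-\beta}c_k^{N/(N-1)}e^{\lambda_k c_k^{N/(N-1)}}\sim 1$ gives convergence to the singular anisotropic Liouville bubble. Away from the origin, $c_k^{1/(N-1)}u_k$ converges to a nonlinear Green function $G$ of the operator coupled with the $L^q$ term coming from the Lagrange multiplier. Capacity estimates on annuli $W_\delta\setminus W_{Rr_k}$ then give the upper bound
\[
 \limsup_k \J_k(u_k)\le \frac{\kappa_N}{\delta}\,e^{\lambda_\beta A_0+\delta H_{N-1}}+\text{(contribution of }\Ph\text{ lower terms)},
\]
where $A_0$ is the constant term of $G$. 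This is the Carleson--Chang type concentration level; because of the mixed constraint, the normalization $\norm{F(\nabla u_k)}_N^a=1-\norm{u_k}_q^N$ enters through a factor depending on $a$.

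The main step, and the expected obstacle, is building a test function whose value strictly exceeds this level. We take the Carleson--Chang/Moser-type function on $W_{R\epsilon}$, glue it to $G$ outside, normalize it to satisfy the constraint, and expand in $\epsilon$. The $q$-norm correction costs a term of order $\norm{u_\epsilon}_q^N\sim c_\epsilon^{-N}$ in the exponent. The gain from the first Taylor term $s^m/m!$ of $\Ph$, integrated over the region where $u_\epsilon\sim G/c^{1/(N-1)}$, is of order $c^{-mN/(N-1)}\cdot c^{N/(N-1)}$ relative to the exponential part. When $m\le N-2$ (equivalently $q<q_-$ by \eqref{eq:7}), the gain is of order $c^{-(m-1)N/(N-1)}$, which strictly dominates both the $O(c^{-N}\log c)$ normalization loss and the $q$-norm loss regardless of $a$. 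Hence $\J(u_\epsilon)>$ the concentration level for small $\epsilon$. This contradicts blow-up, since $\AMTC_{a,N}$ is the limit of the subcritical suprema. I expect the delicate point to be verifying that the exponent comparison indeed has the gap $m\le N-2$ uniformly in $a$. I also expect care in handling the Green function's constant term when $q\ne N$, since then $G$ is nonlinear and its expansion near $0$ must be justified.
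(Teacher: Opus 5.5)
Your overall scheme (maximizers of subcritical problems under the same mixed constraint, compactness versus blow-up, then a Green-function test function above the concentration level) is a legitimate alternative to the paper's route. However, the quantitative comparison that is supposed to produce the condition $m\le N-2$ is wrong in two places.

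First, the $L^q$ part of the constraint is not a small loss. Suppose the test function has height about $Y_\varepsilon^{(N-1)/N}$ and equals $Y_\varepsilon^{-1/N}G$ away from the pole. Then $\norm{u_\varepsilon}_q^N\simeq Y_\varepsilon^{-1}$, not $c_\varepsilon^{-N}$. Through the normalization this shifts $\lambda_\beta u_\varepsilon^{N/(N-1)}$ at the peak by an amount of order one, so the functional value changes by a fixed factor. This shift is neutralized only by an exact cancellation against the constant in the outer Dirichlet energy of the Green function,
$\int_{\{G<t\}}F(\nabla G)^N\dd x=t-\eta\norm{G}_q^q+o(1)$.
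That cancellation requires the Pohozaev identity $\eta\norm{G}_q^q=c_q$ (Lemma~\ref{lem:6.4}) and the correct normalization of $\norm{G}_q$. On the upper-bound side, the same order-one effect appears as $-\frac{\lambda_\beta}{N-1}(S-c_q)$ in \eqref{eq:85}. The paper removes the dependence on the unknown $S$ through the dilation law (Lemma~\ref{lem:7.2}) and obtains $d(N)\le D_N$, with $D_N$ attached to $\norm{G_*}_q^N=c_q$ (Corollary~\ref{cor:7.3}). No $o(1)$ Taylor gain can absorb an $O(1)$ mismatch, so this matching is essential. Your proposal leaves the $a$-dependent factor in the upper bound unspecified while treating the same effect as negligible on the test side, so the comparison does not close.

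Second, once the leading terms agree, the gain has to be weighed against a correction of exact order $Y_\varepsilon^{-1}$.
The gain is the annular term $c_{\mathcal A}Y_\varepsilon^{-m/(N-1)}$, coming from $\Ph(s)\ge s^m/m!$ on a fixed annulus where $\varphi_\varepsilon=Y_\varepsilon^{-1/N}G_*$ (see \eqref{eq:108}).
The correction comes, for $a\ne N$, from the nonlinear normalization $\norm{F(\nabla u)}_N^a=1-\norm{u}_q^N$. In the paper it is the factor $(a/N)^\sigma[1+O_a(Y_\varepsilon^{-1})]$ in \eqref{eq:115}, whose first-order term is negative for $a>N$. At $a=N$ the remaining error is only $o(Y_\varepsilon^{-1})$.
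The comparison therefore gives strictness uniformly in $a$ precisely because $m/(N-1)<1$, i.e.\ $m\le N-2$.

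Your exponents (gain $c^{-(m-1)N/(N-1)}$ against a loss $c^{-N}\log c$) would give strictness whenever $m<N$. That would include $m=N-1$ for all $a$, which the $O(Y_\varepsilon^{-1})$ term does not allow, so your bookkeeping does not capture the actual mechanism.

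Two further points are assumed rather than proved:
\begin{enumerate}
\item A nondegenerate Green limit with $\eta\in(0,\infty)$ needs two-sided bounds on $\norm{u_k}_q^Nc_k^{N/(N-1)}$ and no loss of $L^q$ mass. The paper supplies these in Lemmas~\ref{lem:4.3}, \ref{lem:6.1} and~\ref{lem:6.4}.
\item The singular Carleson--Chang constant carries $\e^{H_{N-1}}$, not $\e^{\delta H_{N-1}}$.
\end{enumerate}

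Finally, the paper never redoes the blow-up for general $a$. It reduces attainment to $\mathcal{M}(a)>d(a)$ via Lemma~\ref{lem:2.2} and uses $d(a)=(a/N)^\sigma d(N)$ from Lemma~\ref{lem:3.1}. It then transports the $a=N$ test function by the exact dilation $T_{N,a}$ of Lemma~\ref{lem:3.3}.
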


\begin{samepage}
\begin{theorem}[Threshold for $q_-\le q<q_+$]\label{thm:1.3}
Let $N\ge2$, $q>1$, $0<\beta<N$, $b=N$, and $q_-\le q<q_+$.
There exists
\[
 a_c=a_c(N,q,\beta)\in(N,\infty]
\]
such that the following assertions hold.
\begin{enumerate}
\item For $0<a<a_c$, the supremum $\AMTC_{a,N}(N,q,\beta)$ is attained
by a nonnegative Wulff-symmetric function.
\item If $a_c<\infty$, then $\AMTC_{a,N}(N,q,\beta)$ is not attained for $a>a_c$.
\end{enumerate}
The present argument does not decide attainability at $a=a_c$ when
$a_c<\infty$.
\end{theorem}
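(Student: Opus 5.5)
Following the reduction recalled after \eqref{eq:5}, I would work with the scalar variational function from \cite{ref8,ref9}. For $t\in(0,1)$ set
\[
 \mathcal{S}(t):=\sup\Bigl\{\J(u):\ \norm{F(\nabla u)}_N\le t^{1/a},\ \norm{u}_q^N\le 1-t\Bigr\},
\]
so that $\AMTC_{a,N}=\sup_{0<t<1}\mathcal{S}(t)$. By dilation $u\mapsto u(\lambda\,\cdot)$ and rescaling $u\mapsto t^{1/a}u$, each $\mathcal{S}(t)$ is a subcritical-type problem in the sense of \cite{ref9}, with a Wulff-symmetric maximizer for $t<1$. The parameter $a$ enters only through $t^{1/a}$. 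Hence attainment of $\AMTC_{a,N}$ is equivalent to the supremum over $t$ being attained in the open interval $(0,1)$ rather than approached as $t\to1$ (concentration) or $t\to0$ (vanishing). Vanishing is excluded because $\Ph$ starts at power $m$ with $mN/(N-1)>q\delta$ by \eqref{eq:4}. Using Wulff symmetrization (\cite{ref4}) I restrict to nonnegative Wulff-symmetric $u$, which reduces everything to a one-dimensional radial problem.

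The first step is the concentration level. Take a maximizing sequence $u_k$ with $\norm{F(\nabla u_k)}_N\to1$ that concentrates at the origin. Radial blow-up analysis, via the Euler--Lagrange equation for the Finsler $N$-Laplacian with singular weight, will give the upper bound $\limsup\J(u_k)\le \Lambda_*$. Here $\Lambda_*$ is expressed through the constant term of the associated radial nonlinear Green function. The exact Euclidean reduction along Wulff-radial profiles makes $\Lambda_*$, and hence $a_c$, independent of $F$.

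The second step is the strict comparison. I test with Carleson--Chang/Moser-type functions glued to $\log$-profiles of the Green function, with $\norm{F(\nabla u_\epsilon)}_N^a=1-\norm{u_\epsilon}_q^N$. The $L^q$ normalization changes the gradient norm by
\[
 \norm{F(\nabla u_\epsilon)}_N^N=(1-\norm{u_\epsilon}_q^N)^{N/a}\approx 1-\tfrac{N}{a}\norm{u_\epsilon}_q^N,
\]
which lowers the exponent and costs about $\tfrac{N}{a}\lambda_\beta\norm{u_\epsilon}_q^N|\log\epsilon|\cdot(\text{const})$. Meanwhile the first retained Taylor term $\lambda_\beta^m|u|^{mN/(N-1)}/m!$ gains a contribution of size $\int|u_\epsilon|^{mN/(N-1)}|x|^{-\beta}$ in the region away from the peak.

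For $m\le N-2$ the Taylor gain dominates and $\J(u_\epsilon)>\Lambda_*$. Theorem~\ref{thm:1.2} is the analogue of this for every $a$. For $m=N-1$ both terms are of the same logarithmic order, so the comparison reads
\[
 \J(u_\epsilon)=\Lambda_*+\bigl(A-\tfrac{B}{a}\bigr)\eta(\epsilon)+o(\eta(\epsilon)),
\]
with $A,B>0$ depending on $N,q,\beta$ and a common small factor $\eta(\epsilon)$.

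This explains the threshold. Define $a_c:=\sup\{a>0:\AMTC_{a,N}>\Lambda_*\}$. Monotonicity in $a$ holds because $t^{1/a}$ is increasing in $a$, so the constraint set shrinks as $a$ grows; hence $\AMTC_{a,N}$ is nonincreasing in $a$ while $\Lambda_*$ is independent of $a$. This makes $\{a:\AMTC_{a,N}>\Lambda_*\}$ an interval $(0,a_c)$. The bound $a_c>N$ comes from $B/N<A$, equivalently from the case $a=N$, where the normalization error is handled by the known full-Sobolev comparison.

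For $a<a_c$ strict inequality gives compactness: no concentration, no vanishing, and \cite{ref9} provides a maximizer, which is Wulff symmetric and nonnegative. For $a>a_c$ we have $\AMTC_{a,N}=\Lambda_*$. A maximizer $u$ at level $a$ would have $\norm{F(\nabla u)}_N<1$ and $\norm{u}_q>0$. It would then be admissible for some $a'\in(a_c,a)$ with slack, and scaling $u$ up would give $\AMTC_{a',N}>\Lambda_*$, contradicting the definition of $a_c$.

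The main obstacle is the sharp upper bound $\Lambda_*$ through blow-up together with the exact logarithmic-order expansion in the case $m=N-1$. The coefficients $A,B$ must be computed precisely, including the $H_{N-1}$ constants, to show that $a_c>N$.
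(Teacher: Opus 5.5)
Your blow-up and test-function outline follows the paper's approach. The threshold mechanism, however, rests on two false claims, and these are exactly the points on which the proof of Theorem~\ref{thm:1.3} turns.

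First, the monotonicity is reversed. Every competitor has $r=\norm{F(\nabla u)}_N<1$, so $r^a$ decreases in $a$ and the admissible set grows with $a$. Your own remark that $t^{1/a}$ increases in $a$ says exactly this. Hence $\mathcal{M}(a)=\AMTC_{a,N}(N,q,\beta)$ is nondecreasing in $a$.

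Second, the concentration level depends on $a$. In your parametrization, the lifting argument gives $\mathcal{S}(t)=\bigl((1-t)/t^{N/a}\bigr)^{\sigma}f(\lambda)$ with $(\lambda/\lambda_N)^{N-1}=t^{N/a}$. By \eqref{eq:14}, $f(\lambda)\simeq(1-t^{N/a})^{-\sigma}$, so the level as $t\to1$ is $d(a)=(a/N)^{\sigma}d(N)$ (Lemma~\ref{lem:3.1}). Your own heuristic shows the same thing. Since $\norm{u_\epsilon}_q^N\sim|\log\epsilon|^{-1}$, the cost $\tfrac{N}{a}\lambda_\beta\norm{u_\epsilon}_q^N|\log\epsilon|$ is of order one and shifts the leading level; it is not a small term $\tfrac{B}{a}\eta(\epsilon)$.

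With the signs corrected, a nondecreasing $\mathcal{M}$ compared against an $a$-independent level would give a final interval $(a_c,\infty)$, not $(0,a_c)$. Your expansion $A-B/a$, which increases in $a$, points the same wrong way.

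The missing ingredient is a comparison of the two quantities after dividing by $a^\sigma$. The dilation $T_{a,a_1}u=u(\tau_{a,a_1}(r)\,\cdot)$ of Lemma~\ref{lem:3.3} maps $\Sa_a$ bijectively onto $\Sa_{a_1}$ and multiplies $\J$ by $\bigl((1-r^{a_1})/(1-r^a)\bigr)^{\sigma}$. For $a<a_1$ one has $1<(1-r^{a_1})/(1-r^a)<a_1/a$. Hence $\mathcal{M}(a)/a^\sigma$ is nonincreasing, and strictly decreasing below any $a_1$ at which $\mathcal{M}(a_1)$ is attained, while $d(a)/a^\sigma$ is constant. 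This gives the initial interval $(0,a_c)$.

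Nonattainment for $a_0>a_c$ comes from pushing a putative maximizer, for which $\mathcal{M}(a_0)=d(a_0)$, down by $T_{a_0,a_1}$. This gives $\mathcal{M}(a_1)>(a_1/a_0)^\sigma d(a_0)=d(a_1)$ for every $a_1<a_0$, so $a_c\ge a_0$. Your nonattainment step fails as written. A maximizer lies on $\Sa_a$, so $r^{a'}+\norm{u}_q^N>1$ for $a'<a$ and it is not admissible there. Admissibility for $a'>a$ gives no contradiction, because $d(a')>d(a)$.

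Finally, $a_c>N$ cannot be imported from the full-Sobolev case when $q\ne N$. It is the strict inequality $\mathcal{M}(N)>d(N)$ of Proposition~\ref{prop:8.2}, and at $a=N$ there is no competing $B/N$ term to compute. The Pohozaev normalization $\norm{G_*}_q^N=c_q$ cancels the outer-energy constant $-c_q$ exactly, and the inner region reproduces $D_N$ up to $o(Y_\varepsilon^{-1})$. The annular Taylor term $c_{\mathcal{A}}Y_\varepsilon^{-1}$ therefore only has to beat these $o(Y_\varepsilon^{-1})$ errors.
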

\end{samepage}

\begin{corollary}[The case $q=N$]\label{cor:1.2}
Let $N\ge2$, $a>0$, and $0<\beta<N$. Under the assumptions on $F$ above,
the following assertions hold.
\begin{enumerate}
\item If $0<b<N$, then $\AMTC_{a,b}(N,N,\beta)$ is attained.
\item If $b=N$ and $0<\beta\le N/(N-1)$, there is a threshold
$a_c(N,N,\beta)\in(N,\infty]$ such that attainment holds for $0<a<a_c$ and,
if $a_c<\infty$, fails for $a>a_c$. Attainment at $a=a_c$ is not decided here.
\item If $N\ge3$, $b=N$, and $N/(N-1)<\beta<N$, then $\AMTC_{a,N}(N,N,\beta)$
is attained for every $a>0$.
\end{enumerate}
In each attainment assertion a maximizer may be chosen nonnegative and
Wulff symmetric. Moreover, for $0<b\le N$,
\begin{equation}\label{eq:8}
 \AMTC_{a,b}^F(N,N,\beta)=\frac{\kappa_N}{\omega_N}\AMTC_{a,b}^E(N,N,\beta),
\end{equation}
where the superscripts $F$ and $E$ refer to the anisotropic and Euclidean
problems, respectively. Attainment
is equivalent in the two problems, and the threshold is independent of $F$.
\end{corollary}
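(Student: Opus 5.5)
The corollary follows by specializing Theorems \ref{thm:1.1}--\ref{thm:1.3} to $q=N$ and locating $q=N$ relative to $q_\pm$. The identity \eqref{eq:8} then comes from an explicit amplitude rescaling between Wulff-symmetric and radial functions.

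\emph{Assertions (i)--(iii).} Assertion (i) is Theorem \ref{thm:1.1} with $q=N$. For $b=N$, compare $N$ with the thresholds in \eqref{eq:6}.
\begin{itemize}
\item Since $N-\beta<N$, we have $N<N^2/(N-\beta)=q_+$ for every $\beta\in(0,N)$. So we never leave the range $q<q_+$.
\item Next, $N<q_-$ is equivalent to $(N-1)(N-\beta)<N(N-2)$. Expanding gives $N^2-N-N\beta+\beta<N^2-2N$, that is, $\beta(N-1)>N$, or $\beta>N/(N-1)$.
\end{itemize}
Hence $0<\beta\le N/(N-1)$ gives $q_-\le N<q_+$, and Theorem \ref{thm:1.3} yields (ii). When $\beta>N/(N-1)$ we have $N<q_-$, and Theorem \ref{thm:1.2} yields (iii). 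For $N=2$ the condition $\beta>2=N/(N-1)$ is incompatible with $\beta<N$; this explains the restriction $N\ge3$ in (iii). The nonnegativity and Wulff symmetry of maximizers are inherited from the cited theorems. A quick consistency check is that, by \eqref{eq:7}, these cases correspond to $m=N-1$ and $m\le N-2$ for $m=m(N,N,\beta)$.

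\emph{Reduction to symmetric functions.} For \eqref{eq:8}, I would first reduce both suprema to symmetric competitors. On the anisotropic side, use convex symmetrization \cite{ref4}: it preserves $\norm{u}_N$, does not increase $\norm{F(\nabla u)}_N$ (P\'olya--Szeg\H{o}), and does not decrease $\J$, because $F^{o}(x)^{-\beta}$ is Wulff-symmetric decreasing (Hardy--Littlewood). On the Euclidean side, Schwarz symmetrization plays the same role. So both suprema may be taken over $u(x)=v(F^{o}(x))$, respectively $u_E(y)=w(|y|)$, with $v,w\ge0$ nonincreasing. Moreover, the symmetrization of a maximizer is again a maximizer, so attainment in either problem is equivalent to attainment within the symmetric class. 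No equality cases in P\'olya--Szeg\H{o} are needed for this.

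\emph{The rescaling and \eqref{eq:8}.} In polar Wulff coordinates, for $u(x)=v(F^{o}(x))$, using $F(\nabla F^{o})=1$:
\[
\norm{F(\nabla u)}_N^N=K_N\int_0^\infty|v'|^Nr^{N-1}\dd r,\qquad
\norm{u}_N^N=K_N\int_0^\infty|v|^Nr^{N-1}\dd r,
\]
\[
\J(u)=K_N\int_0^\infty\Ph\bigl(\lambda_\beta v^{\frac N{N-1}}\bigr)r^{N-1-\beta}\dd r.
\]
The Euclidean formulas are identical with $K_N$ replaced by $N\omega_N$ and $\lambda_N$ replaced by $\alpha_N$.

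Set $c:=K_N/(N\omega_N)$ and $w:=c^{1/N}v$. Then both $\norm{\nabla u_E}_N^N$ and $\norm{u_E}_N^N$ equal their anisotropic counterparts exactly. Here it is essential that $q=N$: the $L^q$ norm would otherwise pick up the factor $c^{q/N}\neq c$. The exponent also matches:
\[
\delta\alpha_N w^{\frac N{N-1}}=\delta\, N(N\omega_N)^{\frac1{N-1}}c^{\frac1{N-1}}v^{\frac N{N-1}}=\lambda_\beta v^{\frac N{N-1}}.
\]
Consequently the Euclidean functional equals $(N\omega_N/K_N)\J(u)=(\omega_N/\kappa_N)\J(u)$.

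The map $v\mapsto c^{1/N}v$ is a bijection between the two admissible symmetric classes, for every $a>0$ and $0<b\le N$. Taking suprema gives \eqref{eq:8}. Since this bijection carries maximizers to maximizers, attainment is equivalent in the two problems. As $\kappa_N/\omega_N$ does not depend on $a$, the set of $a$ with attainment, and hence $a_c$, is independent of $F$.

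\emph{Main obstacle.} The only delicate point is justifying the symmetrization step in $\D$ with $q=N$ and the singular weight. This requires approximation by compactly supported functions, together with the fact that symmetrization respects both norms in the mixed constraint simultaneously. I would invoke the Wulff symmetry results of \cite{ref9}, which already supply this step.
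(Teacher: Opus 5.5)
Your proposal is correct and follows the paper's proof. Assertions (i)--(iii) come from Theorems~\ref{thm:1.1}--\ref{thm:1.3} after placing $q=N$ relative to $q_\pm$ exactly as you do, and \eqref{eq:8} is Proposition~\ref{prop:2.1} at $q=N$: there the paper's map $v(y)=\gamma U(c|y|)$ has $c=1$ and $\gamma=(\kappa_N/\omega_N)^{1/N}$, which is precisely your symmetrize-then-rescale-amplitude-by-$(K_N/(N\omega_N))^{1/N}$ argument.
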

An independent proof for $q=N$, using the classical Green function and
compactness of the additional Taylor terms, is given in the supplementary material.

The proofs of Theorems~\ref{thm:1.1}--\ref{thm:1.3} have two main components. First, the
critical--subcritical relation and exact spatial dilations reduce attainment
to a strict inequality above an endpoint level. Second, we lift subcritical
maximizers to the critical constraint and analyze their concentration.
The inner profile is explicit, while the outer limit solves
\begin{equation}\label{eq:9}
 -Q_NG+\eta G^{q-1}=\delta_0\qquad\text{in }\R^N,
\end{equation}
where $Q_Nu:=\diver(F(\nabla u)^{N-1}F_\xi(\nabla u))$.
A Pohozaev identity determines the optimal normalization of its dilation
family and the associated concentration upper bound.

The Green-function test has an inner contribution equal to this bound up
to $o(1/\log(1/\varepsilon))$. On a fixed Wulff annulus, the first retained
Taylor term contributes a positive multiple of
$(\log(1/\varepsilon))^{-m/(N-1)}$. For $m\le N-2$, this term dominates
the normalization error for every fixed $a>0$. For $m=N-1$, the refined
error gives a strict comparison at $a=N$, from which the threshold follows.

The paper is organized as follows. Section~\ref{sec:variational} establishes
the critical--subcritical reduction and the threshold criterion;
Theorem~\ref{thm:1.1} is proved in Section~\ref{sec:2}.
Section~\ref{sec:concentration} develops the blow-up analysis, identifies
the inner profile and the nonlinear Green function, and derives the
concentration bound. Section~\ref{sec:endpoint} constructs the Green-function
test and completes the endpoint arguments. Theorems~\ref{thm:1.2}
and~\ref{thm:1.3} are proved in Sections~\ref{sec:9.1} and~\ref{sec:9.2},
respectively, while Corollary~\ref{cor:1.2} is proved in
Section~\ref{sec:9.3}. Appendix~\ref{sec:A} contains the refined test-function
estimates, and Appendix~\ref{sec:diff} supplies the differentiability argument
used in the Euler--Lagrange equation.

\section{Variational reduction and threshold structure}\label{sec:variational}
\subsection{Critical--subcritical reduction}\label{sec:2}
As in \cite{ref9}, $u^{\star}$ and $u^{\diamond}$ denote the convex
and Schwarz symmetrizations, respectively. The following reduction
determines the dependence of the supremum on $F$.
\begin{proposition}[Euclidean reduction]\label{prop:2.1}
Let $\omega_N$ be the volume of the Euclidean unit ball and put
\[
 \gamma=(\kappa_N/\omega_N)^{1/N},\qquad c=\gamma^{q/N-1}.
\]
For $0<b\le N$,
\begin{equation}\label{eq:10}
 \AMTC_{a,b}^F(N,q,\beta)=\gamma^{\beta+q\delta}\AMTC_{a,b}^E(N,q,\beta),
\end{equation}
where the Euclidean problem has the same constraint and Taylor remainder,
and $\lambda_N$ is replaced by
$\alpha_N=N^{N/(N-1)}\omega_N^{1/(N-1)}$. Attainment is equivalent in
the two problems. In particular, a threshold defined by attainment or
by a strict endpoint inequality is independent of $F$.
\end{proposition}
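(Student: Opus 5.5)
The plan is to reduce both suprema in \eqref{eq:10} to one-dimensional problems over symmetric profiles and then match the two profile problems by an explicit change of amplitude and scale.

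\textbf{Step 1: reduction to symmetric functions.} For $u\in\D$ we may replace $u$ by $|u|$, which changes none of the three quantities. Then pass to the convex symmetrization $u^\star$, which depends only on $F^{o}(x)$. By the anisotropic P\'olya--Szeg\H{o} inequality of \cite{ref4},
\[
\norm{F(\nabla u^\star)}_N\le\norm{F(\nabla u)}_N .
\]
Equimeasurability gives $\norm{u^\star}_q=\norm{u}_q$. Since $\Ph(\lambda_\beta s^{N/(N-1)})$ is increasing in $s\ge 0$ and $F^{o}(x)^{-\beta}$ is symmetric decreasing with respect to Wulff balls, the Hardy--Littlewood inequality gives $\J(u)\le\J(u^\star)$. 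Hence the constraint is preserved, the functional does not decrease, and $\AMTC^F_{a,b}$ equals the supremum over nonnegative functions of the form $u=\varphi(F^{o}(x))$ with $\varphi$ nonincreasing. Moreover, if $u$ is a maximizer, then $u^\star$ is also a maximizer. The Euclidean problem reduces in the same way via Schwarz symmetrization, to functions $v=\varphi(|x|)$. I expect the one delicate point to lie here: justifying P\'olya--Szeg\H{o} and the weighted rearrangement inequality on the completion $\D$, rather than on $W^{1,N}$. For this I would use that functions in $\D$ vanish at infinity in measure, because they lie in $L^q$, and invoke the symmetrization framework of \cite{ref9}.

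\textbf{Step 2: profile computation.} For $u(x)=\varphi(F^{o}(x))$ we use $F(\nabla F^{o})=1$ a.e.\ and $|\{F^{o}<r\}|=\kappa_Nr^N$. Polar coordinates adapted to Wulff shapes then give
\[
\int_{\R^N}F(\nabla u)^N\dd x=K_N\int_0^\infty|\varphi'|^Nr^{N-1}\dd r,\qquad \int_{\R^N}|u|^q\dd x=K_N\int_0^\infty|\varphi|^qr^{N-1}\dd r,
\]
and
\[
\J(u)=K_N\int_0^\infty\Ph\bigl(\lambda_\beta|\varphi|^{\frac N{N-1}}\bigr)r^{N-1-\beta}\dd r .
\]
The Euclidean formulas are identical, with $K_N$ replaced by $N\omega_N$ and $\lambda_\beta$ replaced by $\delta\alpha_N$.

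\textbf{Step 3: matching map.} Given a Euclidean radial profile $\varphi$, set
\[
u(x)=\gamma^{-1}\varphi\bigl(F^{o}(x)/c\bigr).
\]
Check each term in turn.
\begin{itemize}
\item The gradient term is dilation invariant in $c$ and acquires the factor $\gamma^{-N}K_N=N\omega_N$, so it equals the Euclidean one.
\item The exponent becomes $\lambda_\beta\gamma^{-N/(N-1)}=\delta NK_N^{1/(N-1)}(\omega_N/\kappa_N)^{1/(N-1)}=\delta\alpha_N$.
\item The $L^q$ term acquires the factor $\gamma^{-q}c^N\kappa_N/\omega_N=\gamma^{-q}\gamma^{q-N}\gamma^N=1$.
\item The functional acquires the factor $(\kappa_N/\omega_N)c^{N-\beta}=\gamma^N\gamma^{(q-N)\delta}=\gamma^{\beta+q\delta}$.
\end{itemize}
Thus the constraint $\norm{F(\nabla u)}_N^a+\norm{u}_q^b\le1$ corresponds exactly to the Euclidean constraint for $v=\varphi(|x|)$, and $\J^F(u)=\gamma^{\beta+q\delta}\J^E(v)$. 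The map is a bijection between the symmetric classes, with inverse $\varphi(r)=\gamma\,u$ evaluated at $F^{o}=cr$. Together with Step 1, this proves \eqref{eq:10}.

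\textbf{Step 4: attainment and thresholds.} If one problem is attained, Step 1 yields a symmetric maximizer, and the bijection of Step 3 sends it to a maximizer of the other problem, since the values scale by the same fixed factor. Any threshold defined through attainment, or through a strict inequality between the supremum and a comparison level, therefore coincides for $F$ and for the Euclidean norm. This requires that the comparison level be defined by a radial, concentration-type quantity that scales by the same factor $\gamma^{\beta+q\delta}$; that holds because the same profile map applies to it.
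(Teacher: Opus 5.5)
Your proposal is correct and follows the paper's proof. You symmetrize via $u^\star$ (respectively $u^\diamond$) using P\'olya--Szeg\H{o} and the weighted Hardy--Littlewood inequality, and then match Wulff-radial and Euclidean-radial profiles by an amplitude--dilation map; your $u(x)=\gamma^{-1}\varphi(F^{o}(x)/c)$ is exactly the inverse of the paper's $v(y)=\gamma U(c|y|)$, with the same identities $\alpha_N\gamma^{N/(N-1)}=\lambda_N$, $\gamma^{q-N}c^{-N}=1$ and $\gamma^{N}c^{N-\beta}=\gamma^{\beta+q\delta}$. The endpoint-level claim, which you leave a little vague, is made precise by applying the same map to the subcritical suprema: this gives $f^F(\lambda)=\gamma^{\beta+q\delta}f^E(\lambda\alpha_N/\lambda_N)$, and hence $d^F(a)=\gamma^{\beta+q\delta}d^E(a)$, because the prefactor in $H_{a,b}$ depends only on $\lambda/\lambda_N$.
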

\begin{proof}
Passing to $u^{\star}$ and using the weighted Hardy--Littlewood inequality
reduce the anisotropic problem to $u(x)=U(F^{o}(x))$, with $U$ nonnegative and
nonincreasing. Set $v(y)=\gamma U(c|y|)$. Polar integration gives
\[
 \norm{\nabla v}_N^N=\norm{F(\nabla u)}_N^N,\qquad
 \norm{v}_q^q=\gamma^{q-N}c^{-N}\norm{u}_q^q=\norm{u}_q^q.
\]
Since $\alpha_N\gamma^{\frac{N}{N-1}}=\lambda_N$, a change of radial variable gives
\[
 \J_F(u)=\gamma^N c^{N-\beta}\J_E(v)=\gamma^{\beta+q\delta}\J_E(v).
\]
The inverse radial map and passage to $u^{\diamond}$ on the Euclidean side
prove the claim. The multiplicative factor is positive and independent of $a$.
\end{proof}

\begin{lemma}[Weighted power estimate]\label{lem:weighted}
For $p>q\delta$ and $u\in\D$,
\begin{equation}\label{eq:weighted}
 \int_{\R^N}\frac{|u|^p}{F^{o}(x)^\beta}\dd x
 \le C\norm{F(\nabla u)}_N^{\,p-q\delta}\norm{u}_q^{q\delta},
\end{equation}
where $C=C(N,q,\beta,p,F)$. In particular, for $p>\max\{1,q\delta\}$,
$D^{N,q}$ embeds continuously into $L^p(F^{o}(x)^{-\beta}\dd x)$.
\end{lemma}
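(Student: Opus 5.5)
By the two invariances of \eqref{eq:weighted} I would reduce to the normalized case $\norm{F(\nabla u)}_N=\norm{u}_q=1$. Then I would bound the weighted integral by an absolute constant, splitting $\R^N$ into $W_1$ and its complement. It suffices to treat $u\in C_c^\infty(\R^N)$; the general case follows by density and Fatou's lemma. Since $F$ and $F^{o}$ are equivalent to the Euclidean norms, I may replace $F(\nabla u)$ by $|\nabla u|$ and $F^{o}(x)$ by $|x|$ at the cost of constants depending on $F$.

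\emph{Scaling.} For $u_{t,\lambda}(x)=t\,u(\lambda x)$ we have $\norm{\nabla u_{t,\lambda}}_N=t\norm{\nabla u}_N$ and $\norm{u_{t,\lambda}}_q^{q}=t^q\lambda^{-N}\norm{u}_q^q$. The left-hand side of \eqref{eq:weighted} scales as $t^p\lambda^{\beta-N}$. The right-hand side scales as $t^{p-q\delta}\cdot t^{q\delta}\lambda^{-N\delta}=t^p\lambda^{\beta-N}$, because $N\delta=N-\beta$. Both sides are therefore homogeneous of the same bidegree. Choosing $t$ and $\lambda$ suitably (for $u\ne0$) gives $\norm{\nabla u}_N=\norm{u}_q=1$, and it remains to show $\int|u|^p|x|^{-\beta}\dd x\le C$.

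\emph{Inner region.} I would use the whole-space Gagliardo--Nirenberg inequality $\norm{u}_s\le C\norm{\nabla u}_N^{1-q/s}\norm{u}_q^{q/s}$, valid for every $s\in[q,\infty)$ in the borderline case of gradient exponent $N$. It follows from the Adachi--Tanaka/Ruf type bound, or from Sobolev's inequality applied to powers $|u|^\gamma$ and iteration. Pick $\tau>1$ with $\beta\tau'<N$, where $\tau'=\tau/(\tau-1)$, and with $p\tau\ge q$. H\"older's inequality then gives
\[
\int_{W_1}\frac{|u|^p}{|x|^\beta}\dd x\le\norm{u}_{p\tau}^p\Bigl(\int_{W_1}|x|^{-\beta\tau'}\dd x\Bigr)^{1/\tau'}\le C.
\]

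\emph{Outer region.} Here the hypothesis $p>q\delta$ enters, and this is the step I expect to need care. If $p\ge q$, then $|x|^{-\beta}\le C$ outside $W_1$, and $\norm{u}_p^p\le C$ by the same Gagliardo--Nirenberg inequality. If $q\delta<p<q$, I would apply H\"older with exponents $q/p$ and $q/(q-p)$:
\[
\int_{\R^N\setminus W_1}\frac{|u|^p}{|x|^\beta}\dd x\le\norm{u}_q^{p}\Bigl(\int_{|x|\ge c}|x|^{-\beta q/(q-p)}\dd x\Bigr)^{(q-p)/q}.
\]
The last integral is finite exactly when $\beta q/(q-p)>N$, that is when $p>q(1-\beta/N)=q\delta$. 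This shows the hypothesis is sharp for this argument.

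\emph{Embedding.} The continuous embedding then follows from Young's inequality, since $\norm{F(\nabla u)}_N^{p-q\delta}\norm{u}_q^{q\delta}\le C\bigl(\norm{\nabla u}_N+\norm{u}_q\bigr)^p$. The restriction $p>1$ is only needed so that $L^p(F^{o}(x)^{-\beta}\dd x)$ is a normed space. The main technical point is to cite, or quickly prove, the whole-space Gagliardo--Nirenberg inequality in the $N$-gradient case for all $s\ge q$. The remaining steps are elementary H\"older estimates combined with the scaling normalization.
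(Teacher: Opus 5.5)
Your proof is correct, and it takes a genuinely different route from the paper. The paper derives \eqref{eq:weighted} from the subcritical singular anisotropic Moser--Trudinger inequality \cite[Theorem 1.1]{ref8}. It applies that inequality with a fixed $\lambda<\lambda_N$ to $w=u/\norm{F(\nabla u)}_N$, discards the exponential, and rescales.

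You avoid the exponential-integrability machinery entirely. You use only the two-parameter scaling, the Gagliardo--Nirenberg inequality with $N$-gradient (valid for all $s\ge q$), and H\"older's inequality on $W_1$ and on its complement. This makes it clear where each hypothesis enters: $\beta<N$ is needed for integrability of the weight at the origin, and $p>q\delta$ is exactly the condition for integrability of $|x|^{-\beta q/(q-p)}$ at infinity.

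Given \cite{ref8}, the paper's route is a one-liner and stays within the toolkit used throughout. Read literally, however, it needs the pointwise bound $|w|^p\le C\,\Phi_{N,q,\beta}(c|w|^{N/(N-1)})$. Since $\Phi_{N,q,\beta}(s)\sim s^m/m!$ as $s\to0$, this bound fails as $w\to0$ unless $p\ge mN/(N-1)$. So the paper's argument directly gives only $p\ge mN/(N-1)$. That suffices for its later uses, where the exponent is $mN/(N-1)$ or can be chosen large. Your H\"older step against $\norm{u}_q$ at infinity also covers the intermediate range $q\delta<p<mN/(N-1)$, uniformly and in a self-contained way.
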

\begin{proof}
For $u\ne0$, apply \cite[Theorem 1.1]{ref8} with a fixed strictly subcritical
exponential parameter to $w=u/\norm{F(\nabla u)}_N$ and discard the exponential
factor. Rescaling gives \eqref{eq:weighted}; zero gradient implies $u=0$.
\end{proof}

For $0<\lambda<\lambda_N$, use the subcritical supremum of
\cite{ref8,ref9}, with the abbreviation
\begin{equation}\label{eq:11}
\begin{aligned}
 f(\lambda)&:=\AMTSC(N,q,\lambda,\beta)\\
 &=\sup_{\substack{u\in\D\setminus\{0\}\\\norm{F(\nabla u)}_N\le1}}
 \frac1{\norm{u}_q^{q\delta}}\int_{\R^N}
 \frac{\Ph(\lambda\delta|u|^{\frac{N}{N-1}})}{F^{o}(x)^\beta}\dd x.
\end{aligned}
\end{equation}
The sharp critical--subcritical relation of Guo and Liu \cite{ref8} reads
\begin{equation}\label{eq:12}
 \AMTC_{a,b}(N,q,\beta)=\sup_{0<\lambda<\lambda_N}H_{a,b}(\lambda),
\end{equation}
where
\begin{equation}\label{eq:13}
 H_{a,b}(\lambda):=\left(\frac{1-(\lambda/\lambda_N)^{a(N-1)/N}}
 {(\lambda/\lambda_N)^{b(N-1)/N}}\right)^{q\delta/b}f(\lambda).
\end{equation}
Moreover,
\begin{equation}\label{eq:14}
 f(\lambda)\simeq\left[1-\left(\frac{\lambda}{\lambda_N}\right)^{N-1}\right]^{-q\delta/N}
 \qquad\text{as }\lambda\uparrow\lambda_N.
\end{equation}
The subcritical supremum $f(\lambda)$ is attained for every $\lambda\in(0,\lambda_N)$;
in the singular case its maximizer may be chosen nonnegative and Wulff
symmetric, and $f$ is continuous on $(0,\lambda_N)$ \cite{ref9}.
We use the normalization
\begin{equation}\label{eq:15}
 \norm{F(\nabla u_\lambda)}_N=\norm{u_\lambda}_q=1.
\end{equation}
\begin{lemma}[Endpoint criterion and lifting]\label{lem:2.2}
For every $a,b>0$, $H_{a,b}(\lambda)\to0$ as $\lambda\downarrow0$. If
\[
 \AMTC_{a,b}(N,q,\beta)>
 \limsup_{\lambda\uparrow\lambda_N}H_{a,b}(\lambda),
\]
then the supremum is attained by a nonnegative Wulff-symmetric function.
\end{lemma}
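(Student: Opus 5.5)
We prove the two assertions in turn: first the behaviour of $H_{a,b}$ near $\lambda=0$, then attainment under the strict endpoint inequality.

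\emph{Small $\lambda$.} We bound $f(\lambda)$ by a power of $\lambda$ that is large enough to absorb the singular prefactor in \eqref{eq:13}. Let $u$ be admissible in \eqref{eq:11}, so $\norm{F(\nabla u)}_N\le1$, and fix $\lambda_0<\lambda_N$. For $\lambda\le\lambda_0$, each term of $\Ph$ satisfies
\[
 \frac{(\lambda\delta)^j|u|^{jN/(N-1)}}{j!}\le\left(\frac{\lambda}{\lambda_0}\right)^{m}\frac{(\lambda_0\delta)^j|u|^{jN/(N-1)}}{j!},\qquad j\ge m .
\]
Hence $\Ph(\lambda\delta|u|^{N/(N-1)})\le(\lambda/\lambda_0)^m\Ph(\lambda_0\delta|u|^{N/(N-1)})$, and therefore $f(\lambda)\le(\lambda/\lambda_0)^m f(\lambda_0)$. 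Here $f(\lambda_0)<\infty$ by the subcritical inequality of \cite{ref8}.

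The prefactor in \eqref{eq:13} is $O(\lambda^{-(N-1)q\delta/N})$ as $\lambda\downarrow0$. By \eqref{eq:4}, $m>(N-1)q\delta/N$, so
\[
 H_{a,b}(\lambda)=O\bigl(\lambda^{\,m-(N-1)q\delta/N}\bigr)\to0 .
\]

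\emph{Attainment.} Put $L:=\limsup_{\lambda\uparrow\lambda_N}H_{a,b}(\lambda)$ and assume $\AMTC_{a,b}(N,q,\beta)>L$.

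1. The critical supremum is positive, since any nonzero test function gives a positive value of $\J$. So both ends of $(0,\lambda_N)$ lie strictly below the supremum: the left end by the first part, the right end by hypothesis.

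2. Choose $\lambda_k$ with $H_{a,b}(\lambda_k)\to\AMTC_{a,b}$, using \eqref{eq:12}. By step 1, the $\lambda_k$ stay in a compact subinterval $[\lambda_1,\lambda_2]\subset(0,\lambda_N)$.

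3. The function $H_{a,b}$ is continuous on $(0,\lambda_N)$, because $f$ is continuous there \cite{ref9}. Passing to a subsequence $\lambda_k\to\lambda^*$ gives $H_{a,b}(\lambda^*)=\AMTC_{a,b}$.

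\emph{Lifting.} Let $u_{\lambda^*}$ be the nonnegative Wulff-symmetric subcritical maximizer normalized as in \eqref{eq:15}. We need a function in the critical constraint whose value of $\J$ equals $H_{a,b}(\lambda^*)$. Write $t=(\lambda^*/\lambda_N)^{(N-1)/N}$ and consider the family
\[
 w(x)=A\,u_{\lambda^*}(x/s),\qquad A,s>0 .
\]
Then:
\[
 \norm{F(\nabla w)}_N=A,\qquad \norm{w}_q=As^{N/q},\qquad \J(w)=s^{N-\beta}\int\frac{\Ph(\lambda_\beta A^{N/(N-1)}|u_{\lambda^*}|^{N/(N-1)})}{F^o(x)^\beta}\dd x .
\]
Choose $A=t$. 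Since $\lambda_\beta t^{N/(N-1)}=\lambda^*\delta$, the integral becomes the subcritical one and equals $f(\lambda^*)$, because $\norm{u_{\lambda^*}}_q=1$.

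Next choose $s$ so the constraint is saturated: $t^a+t^bs^{Nb/q}=1$. This gives
\[
 s^{N}=\left(\frac{1-t^a}{t^b}\right)^{q/b},\qquad\text{so}\qquad s^{N-\beta}=s^{N\delta}=\left(\frac{1-t^a}{t^b}\right)^{q\delta/b}.
\]
With these choices $\J(w)=H_{a,b}(\lambda^*)$, which is exactly the identity behind \eqref{eq:12}. Thus $w$ attains $\AMTC_{a,b}$.

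Finally, $w$ is a dilation of a Wulff-symmetric function, so it is Wulff symmetric. It is nonnegative because $u_{\lambda^*}$ is.

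The only delicate point is the continuity of $H_{a,b}$, and hence of $f$, on the open interval; it is supplied by \cite{ref9}. The bound $f(\lambda)\lesssim\lambda^m$ handles the behaviour at $0$. The hypothesis handles the behaviour at $\lambda_N$; without it, the maximizing sequence could escape to $\lambda_N$, where the subcritical maximizers concentrate.
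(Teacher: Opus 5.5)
Your proposal is correct and follows essentially the same route as the paper. You bound $f(\lambda)\le C\lambda^m$ near zero; your termwise comparison $f(\lambda)\le(\lambda/\lambda_0)^m f(\lambda_0)$ just makes explicit what the paper cites from \cite{ref8}. You then use continuity of $f$ and the strict endpoint inequality to obtain an interior maximizer $\lambda^*$ of $H_{a,b}$, and lift the normalized subcritical maximizer by the amplitude $t=\theta_*$ and a dilation whose scale $s$ is exactly the paper's $1/\rho_*$.
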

\begin{proof}
For fixed $\lambda_0\in(0,\lambda_N)$, the first retained Taylor index and
\cite[Theorem 1.1]{ref8} give $f(\lambda)\le C\lambda^m$ for
$0<\lambda\le \lambda_0$. Thus
\[
 H_{a,b}(\lambda)\le C\lambda^{m-q\delta(N-1)/N}\longrightarrow0
\]
by \eqref{eq:4}. The strict endpoint inequality and continuity of $f$
then imply that $H_{a,b}$ attains its maximum at some $\lambda_*\in(0,\lambda_N)$.

Let $u_*$ be a subcritical maximizer satisfying \eqref{eq:15}. Set
\[
 \theta_*=\left(\frac{\lambda_*}{\lambda_N}\right)^{(N-1)/N},\qquad
 s_*=(1-\theta_*^a)^{1/b},\qquad
 \rho_*=(\theta_*/s_*)^{q/N},\qquad
 v_*(x)=\theta_*u_*(\rho_*x).
\]
Spatial dilation preserves the $N$-Dirichlet norm and scales the $L^q$ norm
by $\rho_*^{-N/q}$. Consequently,
\[
 \norm{F(\nabla v_*)}_N=\theta_*,\qquad \norm{v_*}_q=s_*,
 \qquad \J(v_*)=H_{a,b}(\lambda_*)=\AMTC_{a,b}(N,q,\beta).
\]
Thus $v_*$ is admissible, and it inherits nonnegativity and Wulff symmetry
from $u_*$.
\end{proof}

\begin{proof}[Proof of Theorem~\ref{thm:1.1}]
By \eqref{eq:14},
\[
 H_{a,b}(\lambda)\simeq
 \left(1-\frac{\lambda}{\lambda_N}\right)^{q\delta(1/b-1/N)}
 \longrightarrow0\qquad(\lambda\uparrow\lambda_N).
\]
Since the supremum is positive, Lemma~\ref{lem:2.2} applies.
\end{proof}
From now on $b=N$.

\subsection{Scaling and threshold criterion}\label{sec:3}
For $a>0$ set
\[
 \mathcal{M}(a):=\AMTC_{a,N}(N,q,\beta),\qquad H_a(\lambda):=H_{a,N}(\lambda),
\]
and set
\begin{equation}\label{eq:18}
 \sigma:=\frac{q\delta}{N}=\frac{q(N-\beta)}{N^2}.
\end{equation}
With
\[
 r=\left(\frac{\lambda}{\lambda_N}\right)^{(N-1)/N},
\]
we have
\begin{equation}\label{eq:19}
 H_a(\lambda)=\left(\frac{1-r^a}{r^N}\right)^\sigma f(\lambda).
\end{equation}
Define the upper-endpoint level
\begin{equation}\label{eq:20}
 d(a):=\limsup_{\lambda\uparrow\lambda_N}H_a(\lambda).
\end{equation}
By \eqref{eq:14}, $0<d(a)<\infty$.
\begin{lemma}[Endpoint scaling]\label{lem:3.1}
For every $a>0$,
\begin{equation}\label{eq:21}
 d(a)=\left(\frac{a}{N}\right)^\sigma d(N).
\end{equation}
\end{lemma}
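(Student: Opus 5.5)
We need $d(a)=\limsup_{r\uparrow1}\bigl((1-r^a)/r^N\bigr)^\sigma f(\lambda)$, where $r=(\lambda/\lambda_N)^{(N-1)/N}$. The plan is to compare $H_a(\lambda)$ with $H_N(\lambda)$ along the same sequence $\lambda\uparrow\lambda_N$. The parameter $a$ enters \eqref{eq:19} only through the explicit prefactor, while $f(\lambda)$ does not depend on $a$. So we write
\[
 H_a(\lambda)=\left(\frac{1-r^a}{1-r^N}\right)^{\sigma}H_N(\lambda),
\]
which holds for every $\lambda\in(0,\lambda_N)$, since both $1-r^a$ and $1-r^N$ are positive when $0<r<1$.

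The key elementary fact is that, as $r\uparrow1$ (equivalently, as $\lambda\uparrow\lambda_N$),
\[
 \frac{1-r^a}{1-r^N}\longrightarrow\frac aN .
\]
This follows from l'H\^opital's rule, or from writing $1-r^a=a(1-r)+O((1-r)^2)$. Hence the ratio factor
\[
 g_a(\lambda):=\left(\frac{1-r^a}{1-r^N}\right)^\sigma
\]
is a positive function that converges to the finite positive limit $(a/N)^\sigma$.

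We then use the standard fact that if $g(\lambda)\to L\in(0,\infty)$ and $h\ge0$, then $\limsup (gh)=L\limsup h$. This needs $\limsup h$ to be finite, or else it is handled with the convention that both sides are infinite; here finiteness is guaranteed by \eqref{eq:14}, which gives $0<d(N)<\infty$. To verify the fact, take $\varepsilon>0$. For $\lambda$ close enough to $\lambda_N$ we have $(L-\varepsilon)h\le gh\le(L+\varepsilon)h$. Taking $\limsup$ of each side gives $(L-\varepsilon)d(N)\le d(a)\le(L+\varepsilon)d(N)$, and letting $\varepsilon\to0$ yields \eqref{eq:21}.

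No genuine obstacle is expected. The only points needing care are the positivity of the factors and the finiteness of $d(N)$, which together make the product rule for $\limsup$ legitimate. Both are supplied by \eqref{eq:14} and the range $0<r<1$.
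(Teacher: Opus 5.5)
Your proposal is correct and follows the paper's proof exactly: write $H_a(\lambda)=\bigl((1-r^a)/(1-r^N)\bigr)^{\sigma}H_N(\lambda)$, note that the ratio tends to $a/N$ as $r\uparrow1$, and take the limsup. Your explicit justification of the limsup product rule, using positivity of the factor and finiteness of $d(N)$ from \eqref{eq:14}, is a welcome addition that the paper leaves implicit.
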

\begin{proof}
From \eqref{eq:19},
\[
 \frac{H_a(\lambda)}{H_N(\lambda)}=\left(\frac{1-r^a}{1-r^N}\right)^\sigma.
\]
As $\lambda\uparrow\lambda_N$, one has $r\uparrow1$ and
\[
 \frac{1-r^a}{1-r^N}\longrightarrow\frac{a}{N}.
\]
Taking the limsup proves \eqref{eq:21}.
\end{proof}
The constraint hypersurfaces corresponding to different values of $a$ are
related by an exact spatial dilation.
For $a>0$ let
\[
 \Sa_a:=\{u\in\D\setminus\{0\}:\norm{F(\nabla u)}_N^a+\norm{u}_q^N=1\}.
\]
Since every coefficient of $\Ph$ is positive, a nonzero competitor satisfying
a strict inequality in the constraint can be multiplied by a factor larger
than one until the boundary is reached. Hence $\mathcal{M}(a)$ is unchanged
if the supremum is taken over $\Sa_a$.

\begin{lemma}[Exact scaling with respect to $a$]\label{lem:3.3}
Let $a,{a_1}>0$. For $u\in\Sa_a$ set
\[
 r:=\norm{F(\nabla u)}_N\in(0,1)
\]
and
\begin{equation}\label{eq:22}
 \tau_{a,{a_1}}(r):=\left(\frac{1-r^a}{1-r^{a_1}}\right)^{q/N^2}.
\end{equation}
Define
\[
 (T_{a,{a_1}}u)(x):=u(\tau_{a,{a_1}}(r)x).
\]
Then $T_{a,{a_1}}:\Sa_a\to\Sa_{a_1}$ is a bijection with inverse
$T_{{a_1},a}$, and
\begin{equation}\label{eq:23}
 \J(T_{a,{a_1}}u)=\left(\frac{1-r^{a_1}}{1-r^a}\right)^\sigma\J(u).
\end{equation}
\end{lemma}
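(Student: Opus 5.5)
The plan is a direct computation based on how a spatial dilation acts on the three quantities involved. Fix $u\in\Sa_a$ and write $r=\norm{F(\nabla u)}_N\in(0,1)$, so that $\norm{u}_q^N=1-r^a$. For $t>0$ put $u_t(x)=u(tx)$. A change of variables gives three scaling rules:
\[
\norm{F(\nabla u_t)}_N=\norm{F(\nabla u)}_N,\qquad
\norm{u_t}_q^q=t^{-N}\norm{u}_q^q,\qquad
\J(u_t)=t^{\beta-N}\J(u).
\]
The first holds because $F$ is positively homogeneous, so $F(\nabla u_t)(x)=tF(\nabla u)(tx)$, and the $N$-th power of $t$ cancels against the Jacobian. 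The third holds because $F^{o}(x)^{-\beta}$ is homogeneous of degree $-\beta$ and $\Ph$ is applied pointwise. In particular $T_{a,a_1}$ preserves $\norm{F(\nabla\cdot)}_N$, so the number $r$ is the same for $u$ and for $T_{a,a_1}u$.

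Next I check that $T_{a,a_1}u$ lies in $\Sa_{a_1}$. With $t=\tau:=\tau_{a,a_1}(r)$,
\[
\norm{u_\tau}_q^N=\tau^{-N^2/q}\norm{u}_q^N=\frac{1-r^{a_1}}{1-r^a}\,(1-r^a)=1-r^{a_1}.
\]
Hence $\norm{F(\nabla u_\tau)}_N^{a_1}+\norm{u_\tau}_q^N=1$. Nonzeroness and membership in $\D$ are preserved by dilation, since the operation is bounded on $C_c^\infty$ and passes to the completion.

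For bijectivity, $T_{a_1,a}$ uses the same $r$, because that value is invariant under the first map. We have $\tau_{a_1,a}(r)=\tau_{a,a_1}(r)^{-1}$, so composing the two dilations gives $x\mapsto u(\tau^{-1}\tau x)=u(x)$. The composition in the other order is handled identically.

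For the functional,
\[
\J(T_{a,a_1}u)=\tau^{\beta-N}\J(u)=\left(\frac{1-r^a}{1-r^{a_1}}\right)^{-q(N-\beta)/N^2}\J(u)=\left(\frac{1-r^{a_1}}{1-r^a}\right)^{\sigma}\J(u),
\]
using $\sigma=q(N-\beta)/N^2$ from \eqref{eq:18}.

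The main obstacle is bookkeeping rather than anything substantive. The points to get right are the exponent $-\beta$ coming from the weight $F^{o}(tx)^{-\beta}$ under the change of variables $y=tx$, and the fact that the dilation factor depends on $u$ only through the quantity $r$, which the map itself preserves. It is this invariance that makes the inverse well defined.
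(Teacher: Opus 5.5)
Your proposal is correct and is essentially the paper's proof: invariance of $\norm{F(\nabla\cdot)}_N$ under dilation, the $L^q$ factor $\tau^{-N^2/q}$ to land on $\Sa_{a_1}$, the reciprocal factors $\tau_{a_1,a}(r)=\tau_{a,a_1}(r)^{-1}$ for the inverse, and $\J(u(\tau\,\cdot))=\tau^{\beta-N}\J(u)$ combined with $\sigma=q(N-\beta)/N^2$. You state explicitly that $r$ is unchanged by the map, which is what makes the inverse well defined; the paper leaves this inside ``direct substitution''.
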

\begin{proof}
Spatial dilation leaves the $L^N$ norm of $F(\nabla u)$ invariant.
Since $u\in\Sa_a$,
\[
 \norm{u}_q^N=1-r^a.
\]
Moreover,
\[
 \norm{T_{a,{a_1}}u}_q^N=\tau_{a,{a_1}}^{-N^2/q}(1-r^a)=1-r^{a_1}.
\]
Thus $T_{a,{a_1}}u\in\Sa_{a_1}$, and direct substitution shows that
$T_{{a_1},a}$ is its inverse. Moreover, the weighted functional satisfies
\[
 \J(u(\tau\,\cdot))=\tau^{-(N-\beta)}\J(u).
\]
Using \eqref{eq:22} and \eqref{eq:18} gives \eqref{eq:23}.
\end{proof}
\begin{proposition}[Monotonicity in $a$]\label{prop:3.4}
If $0<a<{a_1}$, then
\begin{equation}\label{eq:24}
 \mathcal{M}(a)\le\mathcal{M}({a_1})\le\left(\frac{{a_1}}{a}\right)^\sigma\mathcal{M}(a).
\end{equation}
Consequently, $\mathcal{M}$ is continuous and nondecreasing on $(0,\infty)$, while
\[
a\longmapsto\frac{\mathcal{M}(a)}{a^\sigma}
\]
is nonincreasing. If, in addition, $\mathcal{M}({a_1})$ is attained, then
\begin{equation}\label{eq:26}
 \frac{\mathcal{M}(a)}{a^\sigma}>\frac{\mathcal{M}({a_1})}{{a_1}^\sigma}.
\end{equation}
\end{proposition}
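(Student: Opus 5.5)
The plan is to use the exact scaling of Lemma~\ref{lem:3.3} together with the fact, noted before that lemma, that $\mathcal{M}(a)$ may be computed over $\Sa_a$. Fix $0<a<a_1$ and $u\in\Sa_a$ with $r=\norm{F(\nabla u)}_N\in(0,1)$. Since $r<1$, the function $t\mapsto 1-r^t$ is strictly increasing in $t$, so $1\le (1-r^{a_1})/(1-r^a)$. The lower bound in this ratio comes for free. For the upper bound, I would use the elementary inequality
\[
 \frac{1-r^{a_1}}{1-r^a}\le\frac{a_1}{a},\qquad 0<r<1,
\]
which is equivalent to monotonicity of $t\mapsto(1-r^t)/t$. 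Writing $r=\e^{-s}$ with $s>0$, this is the statement that $t\mapsto(1-\e^{-st})/t$ is nonincreasing, which holds because $1-\e^{-x}$ is concave with value $0$ at $x=0$. The inequality is in fact strict for $a<a_1$, since that concavity is strict.

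Inserting these bounds into \eqref{eq:23} gives
\[
 \J(u)\le \J(T_{a,a_1}u)\le (a_1/a)^\sigma\J(u).
\]
Taking the supremum over $u\in\Sa_a$, and using that $T_{a,a_1}$ is a bijection onto $\Sa_{a_1}$, yields both inequalities in \eqref{eq:24}. For the right-hand inequality I would argue from the inverse direction: for $w\in\Sa_{a_1}$ the function $u=T_{a_1,a}w$ lies in $\Sa_a$ and satisfies $\J(w)\le (a_1/a)^\sigma\J(u)\le(a_1/a)^\sigma\mathcal{M}(a)$.

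Monotonicity of $\mathcal{M}$ and of $\mathcal{M}(a)/a^\sigma$ follow by rewriting \eqref{eq:24}. Continuity then follows by squeezing: as $a_1\downarrow a$ or $a\uparrow a_1$, the factor $(a_1/a)^\sigma$ tends to $1$, and $\mathcal{M}$ is finite by \cite{ref8}.

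For the strict inequality \eqref{eq:26}, suppose $w\in\Sa_{a_1}$ attains $\mathcal{M}(a_1)$; we may assume it lies on the constraint boundary, by the remark preceding Lemma~\ref{lem:3.3}. Let $r=\norm{F(\nabla w)}_N$. This $r$ lies in $(0,1)$: it is positive since $w\neq0$, and less than $1$ since otherwise $\norm{w}_q=0$. Set $u=T_{a_1,a}w\in\Sa_a$. Then
\[
 \mathcal{M}(a_1)=\J(w)=\Bigl(\frac{1-r^{a_1}}{1-r^a}\Bigr)^\sigma\J(u)<\Bigl(\frac{a_1}{a}\Bigr)^\sigma\mathcal{M}(a),
\]
where the strict inequality uses the strict form of the elementary inequality and $\J(u)>0$. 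This is \eqref{eq:26}.

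The only delicate point is ensuring that strictness survives. That requires $r$ to lie strictly inside $(0,1)$ and requires strict concavity, both of which are settled above. I do not expect any analytic obstacle beyond this.
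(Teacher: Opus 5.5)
Your proof is correct and follows the paper's argument. You feed the two-sided bound $1<(1-r^{a_1})/(1-r^a)<a_1/a$ into the exact scaling identity of Lemma~\ref{lem:3.3}, and you obtain \eqref{eq:26} by pulling a maximizer back through $T_{a_1,a}$; you derive the bound from strict concavity of $x\mapsto1-\e^{-x}$, whereas the paper differentiates $(1-r^t)/t$, and your checks that the maximizer lies on $\Sa_{a_1}$ with $r\in(0,1)$ and that $\J(u)>0$ are exactly what the paper uses implicitly.
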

\begin{proof}
Fix $0<r<1$ and consider
\[
 h(t)=\frac{1-r^t}{t}.
\]
Writing $L=-\log r>0$, one obtains
\[
 h'(t)=\frac{\e^{-Lt}(1+Lt)-1}{t^2}<0.
\]
Hence, for $0<a<{a_1}$,
\begin{equation}\label{eq:27}
 1<\frac{1-r^{a_1}}{1-r^a}<\frac{{a_1}}{a}.
\end{equation}
Applying Lemma~\ref{lem:3.3} to arbitrary competitors gives \eqref{eq:24}.
Continuity follows from the same two-sided estimate.

If $u_{a_1}\in\Sa_{a_1}$ is a maximizer, then $r_{a_1}:=\norm{F(\nabla u_{a_1})}_N$
lies in $(0,1)$. Applying the inverse map $T_{{a_1},a}$ and using
the strict part of \eqref{eq:27} gives
\[
 \mathcal{M}(a)\ge\J(T_{{a_1},a}u_{a_1})>
 \left(\frac{a}{{a_1}}\right)^\sigma\mathcal{M}({a_1}),
\]
which is \eqref{eq:26}.
\end{proof}
The scaling identities reduce the threshold analysis to a strict comparison
between $\mathcal{M}(a)$ and the upper-endpoint level $d(a)$ at one value
of the constraint exponent.
\begin{proposition}[Threshold criterion]\label{prop:3.5}
Assume
\begin{equation}\label{eq:28}
 \mathcal{M}(N)>d(N).
\end{equation}
Define
\begin{equation}\label{eq:29}
 a_c:=\sup\{a>0:\mathcal{M}(a)>d(a)\}.
\end{equation}
Then $a_c\in(N,\infty]$, and
\begin{equation}\label{eq:30}
 \mathcal{M}(a)>d(a),\qquad 0<a<a_c.
\end{equation}
If $a_c<\infty$, then
\begin{equation}\label{eq:31}
 \mathcal{M}(a)=d(a),\qquad a>a_c,
\end{equation}
and $\mathcal{M}(a)$ is not attained for $a>a_c$. Moreover,
\[
 \mathcal{M}(a_c)=d(a_c)
\]
whenever $a_c<\infty$.
\end{proposition}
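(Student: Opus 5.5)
The plan is to divide everything by $a^\sigma$, so that the endpoint level becomes a constant and the set in \eqref{eq:29} becomes a superlevel set of a monotone function. Put $g(a):=\mathcal{M}(a)/a^\sigma$. By Lemma~\ref{lem:3.1},
\[
 \frac{d(a)}{a^\sigma}=\frac{d(N)}{N^\sigma}=:\ell
\]
for every $a>0$. Hence $\mathcal{M}(a)>d(a)$ if and only if $g(a)>\ell$. By Proposition~\ref{prop:3.4}, $g$ is nonincreasing and continuous on $(0,\infty)$, since $\mathcal{M}$ is continuous. I also record the general lower bound $\mathcal{M}(a)\ge d(a)$ for all $a$. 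It follows from \eqref{eq:12}, because $\mathcal{M}(a)=\sup_\lambda H_a(\lambda)\ge\limsup_{\lambda\uparrow\lambda_N}H_a(\lambda)$. So $g\ge\ell$ everywhere.

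Next I identify the structure of $A:=\{a>0:g(a)>\ell\}$.
\begin{enumerate}
\item $A$ is an initial interval. If $a_1\in A$ and $a<a_1$, then $g(a)\ge g(a_1)>\ell$.
\item $A$ is open, since $g$ is continuous.
\item $N\in A$ by \eqref{eq:28}.
\end{enumerate}
Therefore $A=(0,a_c)$ with $a_c=\sup A$. Openness at $N$ gives $a_c>N$, so $a_c\in(N,\infty]$, and \eqref{eq:30} follows. If $a_c<\infty$, then $a_c\notin A$, and the same holds for every $a>a_c$ because $A$ is an interval. So $g(a)\le\ell$ for all $a\ge a_c$. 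Combined with $g\ge\ell$, this gives $\mathcal{M}(a)=d(a)$ for every $a\ge a_c$. In particular $\mathcal{M}(a_c)=d(a_c)$, which also follows from continuity of $g$ at the boundary of the open set $A$.

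For non-attainment, suppose $a_c<\infty$ and $\mathcal{M}(a_1)$ is attained for some $a_1>a_c$. Pick any $a\in(a_c,a_1)$. The strict inequality \eqref{eq:26} gives $g(a)>g(a_1)$. Since $g(a_1)=\ell$, this means $g(a)>\ell$, so $a\in A$. That contradicts $a>a_c$.

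Almost nothing here is an obstacle. The points that need care are:
\begin{itemize}
\item the inequality $\mathcal{M}\ge d$, which comes directly from \eqref{eq:12};
\item continuity of $\mathcal{M}$, which gives openness of $A$ and hence $a_c>N$ strictly;
\item the use of the strict inequality \eqref{eq:26} rather than the weak monotonicity, which is exactly what rules out attainment beyond $a_c$.
\end{itemize}
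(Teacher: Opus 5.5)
Your proof is correct and follows essentially the same route as the paper. You normalize by $a^\sigma$ so that $d(a)/a^\sigma$ is constant (Lemma~\ref{lem:3.1}), use monotonicity and continuity of $\mathcal{M}(a)/a^\sigma$ (Proposition~\ref{prop:3.4}) to show that $\{a:\mathcal{M}(a)>d(a)\}$ is an open initial interval containing $N$, and invoke the strict inequality \eqref{eq:26} to exclude attainment beyond $a_c$; you also make explicit the lower bound $\mathcal{M}(a)\ge d(a)$ from \eqref{eq:12}, which the paper uses only tacitly when deducing \eqref{eq:31}.
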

\begin{proof}
By Lemma~\ref{lem:3.1}, $d(a)/a^\sigma$ is constant, whereas
Proposition~\ref{prop:3.4} shows that $\mathcal{M}(a)/a^\sigma$ is
nonincreasing. Therefore the set $\{a>0:\mathcal{M}(a)>d(a)\}$ is an
initial interval. Assumption \eqref{eq:28} and continuity imply $a_c>N$.
This proves \eqref{eq:30} and \eqref{eq:31}. Continuity gives equality at
$a_c$ when the threshold is finite.

Suppose that $a_0>a_c$ and that $\mathcal{M}(a_0)$ is attained.
Then \eqref{eq:31} gives $\mathcal{M}(a_0)=d(a_0)$. For every $0<{a_1}<a_0$,
the strict inequality \eqref{eq:26} yields
\[
 \mathcal{M}({a_1})>\left(\frac{{a_1}}{a_0}\right)^\sigma\mathcal{M}(a_0)
 =\left(\frac{{a_1}}{a_0}\right)^\sigma d(a_0)=d({a_1}),
\]
where the last identity follows from Lemma~\ref{lem:3.1}. Hence $a_c\ge a_0$,
a contradiction.
\end{proof}
By Lemma~\ref{lem:2.2}, $\mathcal{M}(a)>d(a)$ implies attainment.
It therefore remains to prove \eqref{eq:28} for $m=N-1$ and the stronger
inequality $\mathcal{M}(a)>d(a)$ for every $a>0$ when $m\le N-2$.

\section{Blow-up analysis and concentration bound}\label{sec:concentration}
\subsection{Blow-up analysis at \texorpdfstring{$a=N$}{a=N}}\label{sec:4}
We now fix $a=b=N$. Choose a sequence
\begin{equation}\label{eq:32}
 \lambda_k\uparrow\lambda_N,\qquad H_N(\lambda_k)\longrightarrow d(N).
\end{equation}
Let $u_k$ be a nonnegative Wulff-symmetric maximizer of $f(\lambda_k)$ satisfying
\eqref{eq:15}. Put
\begin{equation}\label{eq:33}
 \theta_k:=\left(\frac{\lambda_k}{\lambda_N}\right)^{(N-1)/N},\qquad
 s_k:=(1-\theta_k^N)^{1/N},\qquad
 \rho_k:=\left(\frac{\theta_k}{s_k}\right)^{q/N},
\end{equation}
and
\begin{equation}\label{eq:34}
 v_k(x):=\theta_k u_k(\rho_k x).
\end{equation}
Then
\[
\norm{F(\nabla v_k)}_N^N+\norm{v_k}_q^N=1.
\]
Set
\[
E_k:=\int_{\R^N}F(\nabla v_k)^N\dd x=\theta_k^N\to1,\qquad
 S_k:=\int_{\R^N}v_k^q\dd x=s_k^q\to0,
\]
and
\begin{equation}\label{eq:37}
 A_k:=\J(v_k)=H_N(\lambda_k)\longrightarrow d(N)\in(0,\infty).
\end{equation}
\begin{lemma}[Variational characterization of the lifted extremal]\label{lem:4.1}
The function $v_k$ maximizes $\J(v)/\norm{v}_q^{q\delta}$ over $v\in\D\setminus\{0\}$ under
$\norm{F(\nabla v)}_N^N\le E_k$. In particular, it maximizes $\J$ on
\[
 \mathcal{A}_k=\left\{v\in\D:
 \int_{\R^N}F(\nabla v)^N\dd x=E_k,\quad
 \int_{\R^N}|v|^q\dd x=S_k\right\}.
\]
\end{lemma}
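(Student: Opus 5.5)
The plan is to transport the maximality of $u_k$ for $f(\lambda_k)$ to $v_k$ through the change of variables \eqref{eq:34}, exactly as in the lifting step of Lemma~\ref{lem:2.2}.

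For $w\in\D\setminus\{0\}$ with $\norm{F(\nabla w)}_N\le1$, set $v(x)=\theta_k w(\rho_k x)$. This correspondence is a bijection between such $w$ and the $v\in\D\setminus\{0\}$ with $\norm{F(\nabla v)}_N^N\le\theta_k^N=E_k$, because spatial dilation preserves the $N$-Dirichlet norm.

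Next compute how each quantity transforms. Since $|v|^{N/(N-1)}=\theta_k^{N/(N-1)}|w(\rho_k\cdot)|^{N/(N-1)}$ and $\theta_k^{N/(N-1)}=\lambda_k/\lambda_N$, we get
\[
\lambda_\beta|v|^{\frac N{N-1}}=\lambda_k\delta\,|w(\rho_k\cdot)|^{\frac N{N-1}}.
\]
The substitution $y=\rho_k x$ then gives
\[
\J(v)=\rho_k^{-(N-\beta)}\int_{\R^N}\frac{\Ph(\lambda_k\delta|w|^{\frac N{N-1}})}{F^{o}(y)^\beta}\dd y ,
\]
using $F^{o}(x)=\rho_k^{-1}F^{o}(\rho_k x)$. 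Similarly
\[
\norm{v}_q^{q\delta}=\theta_k^{q\delta}\rho_k^{-N\delta}\norm{w}_q^{q\delta}=\theta_k^{q\delta}\rho_k^{-(N-\beta)}\norm{w}_q^{q\delta}.
\]

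Taking the quotient, the factors of $\rho_k$ cancel:
\[
\frac{\J(v)}{\norm{v}_q^{q\delta}}=\theta_k^{-q\delta}\,\frac{1}{\norm{w}_q^{q\delta}}\int_{\R^N}\frac{\Ph(\lambda_k\delta|w|^{\frac N{N-1}})}{F^{o}(y)^\beta}\dd y .
\]
So maximizing the left side over the admissible $v$ is the same as maximizing the functional in \eqref{eq:11} with $\lambda=\lambda_k$. That functional is maximized by $w=u_k$, which is the first claim.

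For the second claim, take any $v\in\mathcal A_k$. Then $\norm{F(\nabla v)}_N^N=E_k$ and $\norm{v}_q^{q\delta}=S_k^{\delta}=\norm{v_k}_q^{q\delta}$. The first claim gives $\J(v)/S_k^\delta\le\J(v_k)/S_k^\delta$, hence $\J(v)\le\J(v_k)$. Also $v_k\in\mathcal A_k$ by construction, so it attains this bound. If $v\ne0$ is needed, note it is automatic, since $S_k>0$.

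No step is a real obstacle. The only point needing care is bookkeeping the exponents of $\rho_k$, namely the identity $N\delta=N-\beta$ that makes them cancel.
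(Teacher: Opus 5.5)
Your proof is correct and is essentially the paper's argument. The paper uses the inverse substitution $z(x)=\theta_k^{-1}w(x/\rho_k)$ to rewrite $\J(w)/\norm{w}_q^{q\delta}$ as $\theta_k^{-q\delta}$ times the subcritical quotient defining $f(\lambda_k)$, evaluated at $z$. It bounds this by $\theta_k^{-q\delta}f(\lambda_k)$ with equality at $v_k$, and gets the claim on $\mathcal{A}_k$ from the fixed denominator. Your check that the powers of $\rho_k$ cancel via $N\delta=N-\beta$ is exactly the bookkeeping the paper leaves implicit.
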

\begin{proof}
For $w\ne0$ satisfying the gradient constraint, put
$z(x)=\theta_k^{-1}w(x/\rho_k)$. Then $\norm{F(\nabla z)}_N\le1$ and
\[
 \frac{\J(w)}{\norm{w}_q^{q\delta}}
 =\frac{\theta_k^{-q\delta}}{\norm{z}_q^{q\delta}}
 \int_{\R^N}\frac{\Ph(\lambda_k\delta|z|^{\frac{N}{N-1}})}{F^{o}(x)^\beta}\dd x
 \le\theta_k^{-q\delta}f(\lambda_k).
\]
Equality holds for $w=v_k$. On $\mathcal{A}_k$, the denominator is fixed.
\end{proof}
Write $d_k=v_k(0)=\norm{v_k}_\infty$; its finiteness for each fixed $k$
is proved below.
\begin{lemma}\label{lem:4.2}
There exists $\Lambda_k>0$ such that
\begin{equation}\label{eq:38}
 -\Lambda_k Q_Nv_k+\frac{q\delta A_k}{S_k}v_k^{q-1}
 =\frac{N\lambda_\beta}{N-1}\,\Ph'(\lambda_\beta v_k^{\frac{N}{N-1}})v_k^{\frac{1}{N-1}}F^{o}(x)^{-\beta}
\end{equation}
in the weak sense. If
\[
J_k:=\int_{\R^N}\lambda_\beta v_k^{\frac{N}{N-1}}\Ph'(\lambda_\beta v_k^{\frac{N}{N-1}})
 F^{o}(x)^{-\beta}\dd x,
\]
then
\begin{equation}\label{eq:40}
 \Lambda_k E_k=pJ_k-q\delta A_k.
\end{equation}
Moreover,
\begin{equation}\label{eq:41}
 c\le\Lambda_k\le C(1+d_k^{\frac{N}{N-1}}).
\end{equation}
\end{lemma}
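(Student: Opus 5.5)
The plan is to read \eqref{eq:38} as the Euler--Lagrange equation for the \emph{single-constraint} problem in Lemma~\ref{lem:4.1}, not the two-constraint problem on $\mathcal{A}_k$, so that only one multiplier appears. Throughout, $p:=N/(N-1)$.

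\textbf{Step 1: the gradient constraint is active.} By Lemma~\ref{lem:4.1}, $v_k$ maximizes
\[
R(v):=\J(v)\norm{v}_q^{-q\delta}\quad\text{subject to}\quad \norm{F(\nabla v)}_N^N\le E_k .
\]
For $t>1$ the function $t^{-q\delta}\Ph(t^{p}s)$ is strictly increasing in $t$, since every power in $\Ph$ satisfies $jp\ge mp>q\delta$ by \eqref{eq:4}. Hence $R(tv)>R(v)$, and $v_k$ must satisfy $\norm{F(\nabla v_k)}_N^N=E_k$.

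\textbf{Step 2: Lagrange multiplier.} I take the Fréchet differentiability of $\J$ on $\D$ from Appendix~\ref{sec:diff}, with derivative
\[
\langle \J'(v),\varphi\rangle=p\lambda_\beta\int \Ph'(\lambda_\beta|v|^{p})|v|^{p-2}v\,\varphi\,F^{o}(x)^{-\beta}\dd x .
\]
This is where $E_k<1$ and the subcritical integrability of $\exp(\lambda_\beta|v|^{p})$ near $v_k$ enter, via \cite[Theorem 1.1]{ref8}.

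The constraint functional $v\mapsto\int F(\nabla v)^N$ is $C^1$ with derivative $-NQ_Nv$. This derivative is nonzero at $v_k$, because $v_k\ne0$ and testing with $v_k$ gives $NE_k>0$.

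The Lagrange multiplier rule for $\log R$ then gives
\[
\frac{\J'(v_k)}{A_k}-\frac{q\delta\,v_k^{q-1}}{S_k}=\mu\,(-NQ_Nv_k).
\]
Multiplying by $A_k$ and setting $\Lambda_k:=N\mu A_k$ yields \eqref{eq:38}, using $v_k\ge0$.

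\textbf{Step 3: the identity \eqref{eq:40}.} Test \eqref{eq:38} with $v_k$ itself. This is legitimate since $v_k\in\D$ and both sides define continuous functionals. The three terms become $\Lambda_kE_k$, $q\delta A_k$ and $pJ_k$, which is exactly \eqref{eq:40}.

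\textbf{Step 4: the bounds \eqref{eq:41}.} Both rest on comparing $s\Ph'(s)$ with $\Ph(s)$.

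For the lower bound, termwise $s\Ph'(s)=\sum_{j\ge m} j\,s^j/j!\ge m\Ph(s)$. Hence $J_k\ge mA_k$, and
\[
\Lambda_kE_k\ge (pm-q\delta)A_k>0 .
\]
Since $E_k\le1$, $A_k\to d(N)>0$ and $pm>q\delta$, this gives $\Lambda_k\ge c$.

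For the upper bound, use
\[
\Ph'(s)=\Ph(s)+\frac{s^{m-1}}{(m-1)!},\qquad \frac{s^{m}}{(m-1)!}=m\,\frac{s^m}{m!}\le m\Ph(s),
\]
so that $s\Ph'(s)\le s\Ph(s)+m\Ph(s)$. With $s=\lambda_\beta v_k^{p}\le\lambda_\beta d_k^{p}$ this gives $J_k\le(\lambda_\beta d_k^{p}+m)A_k$. Since $E_k\to1$ and $A_k$ is bounded,
\[
\Lambda_k\le C(1+d_k^{p}) .
\]
If $d_k=\infty$ the upper bound is vacuous and becomes meaningful once finiteness of $d_k$ is established.

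\textbf{Main obstacle.} The only nontrivial point is Step 2: $C^1$-regularity of $\J$ on $\D$ in the presence of the singular weight and critical exponential growth. I would rely on Appendix~\ref{sec:diff}, combining Hölder's inequality with Lemma~\ref{lem:weighted} and the strict inequality $E_k<1$. The strict inequality gives room to raise the exponential to a power slightly larger than one while staying below the critical constant.
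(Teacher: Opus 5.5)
Your proposal is correct and follows the paper's own argument. Both use a single Lagrange multiplier for the Dirichlet constraint of Lemma~\ref{lem:4.1}, with $D\J$ taken from Appendix~\ref{sec:diff} since $E_k<1$. Both then test with $v_k$ to get \eqref{eq:40}, and use the comparisons $m\Ph(s)\le s\Ph'(s)\le(s+m)\Ph(s)$ for \eqref{eq:41}. Your Step~1 is harmless but unnecessary, because $\norm{F(\nabla v_k)}_N^N=\theta_k^N=E_k$ holds by construction.

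The one thing the paper does inside this proof that you defer is showing $d_k<\infty$ for each fixed $k$. It obtains an $L^s(W_R)$ bound, $s>1$, on the right-hand side of \eqref{eq:38} from the margin $E_k<1$. Radial integration then gives $r^{N-1}|V_k'(r)|^{N-1}\le C_kr^{N(1-1/s)}$, and the flux vanishes at $0$. As you say, \eqref{eq:41} is formally true without finiteness, but the blow-up analysis that follows needs it, starting with the definition \eqref{eq:46}.
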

\begin{proof}
Since $E_k<1$, Lemma~\ref{lem:differentiability} applies near $v_k$.
Write
\[
 E(v)=\int_{\R^N}F(\nabla v)^N\dd x,\qquad
 S(v)=\int_{\R^N}|v|^q\dd x.
\]
Differentiating the quotient in Lemma~\ref{lem:4.1} under its Dirichlet
constraint gives
\[
 D\J(v_k)=\alpha_kDE(v_k)+\frac{\delta A_k}{S_k}DS(v_k).
\]
The constraint derivative is nonzero since $DE(v_k)[v_k]=NE_k>0$.
Taking $\Lambda_k=N\alpha_k$ yields \eqref{eq:38}. Testing with $v_k$
gives \eqref{eq:40}. Since
\begin{equation}\label{eq:42}
 s\Ph'(s)\ge m\Ph(s),\qquad s\Ph'(s)\le(s+m)\Ph(s),
\end{equation}
we obtain $\Lambda_kE_k\ge(\frac{mN}{N-1}-q\delta)A_k>0$, uniformly bounded away from zero.

We next prove that $d_k<\infty$ for each fixed $k$. Choose $s>1$ close enough
to one that
\[
 s\left(\delta E_k^{1/(N-1)}+\frac{\beta}{N}\right)<1.
\]
The inequality remains strict after a sufficiently small increase in the
exponential coefficient. To use the bounded-domain inequality, write the
radial profile on $W_R$ as $(v_k-V_k(R))_++V_k(R)$; the boundary value is
finite by the radial $L^q$ bound. The estimate
$(x+y)^{\frac{N}{N-1}}\le(1+\varepsilon)x^{\frac{N}{N-1}}+C_\varepsilon y^{\frac{N}{N-1}}$, with $\varepsilon>0$
sufficiently small, preserves the strict inequality above. The singular
weight for the $L^s$ estimate is $F^{o}(x)^{-\beta s}$. The remaining
exponential margin absorbs every fixed polynomial factor, so the nonlinear
density in \eqref{eq:38} belongs to $L^s(W_R)$. The absorption term is
also locally in $L^s$, since local $W^{1,N}$ bounds imply all finite
Lebesgue exponents. Local $W^{1,N}$ membership follows from the Dirichlet
bound and the $L^q$ bound by Poincar\'e's inequality. The radial weak
equation consequently gives
\[
 r^{N-1}|V_k'(r)|^{N-1}\le C_k r^{N(1-1/s)}.
\]
The flux at zero must vanish, since a nonzero limit would imply infinite
$N$-Dirichlet energy. The resulting derivative bound is integrable at zero
because $s>1$. Thus $V_k(0)$ is finite, $V_k$ is continuous at zero, and
its radial flux vanishes there. The integrated radial equation yields
continuity of the radial derivative on $(0,\infty)$.

Finally, the second inequality in \eqref{eq:42} gives
$J_k\le(\lambda_\beta d_k^{\frac{N}{N-1}}+m)A_k$. Together with \eqref{eq:40},
$A_k=O(1)$ and $E_k\to1$, this proves the upper bound in \eqref{eq:41}.
\end{proof}
\begin{lemma}[Blow-up and mass lower bound]\label{lem:4.3}
One has $d_k\to\infty$ and
\begin{equation}\label{eq:45}
 S_kd_k^{q/(N-1)}\ge c>0
\end{equation}
for all sufficiently large $k$.
\end{lemma}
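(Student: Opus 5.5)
The plan is to bound $A_k=\J(v_k)$ from above by quantities that would tend to zero if either assertion failed. Since $A_k\to d(N)>0$ by \eqref{eq:37}, this gives a contradiction in each case. The tools are the weighted power estimate of Lemma~\ref{lem:weighted}, the monotonicity of the Wulff-symmetric profile, and the bounded-domain singular anisotropic Moser--Trudinger inequality. Throughout, $p:=mN/(N-1)>q\delta$ by \eqref{eq:4}.

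\emph{Step 1: $d_k\to\infty$.} Suppose instead that $d_k\le D$ along a subsequence. On $[0,\lambda_\beta D^{N/(N-1)}]$ we have $\Ph(s)\le C_D s^m$. Hence Lemma~\ref{lem:weighted} gives
\[
A_k\le C_D\int_{\R^N}\frac{v_k^{p}}{F^{o}(x)^\beta}\dd x\le C_D E_k^{(p-q\delta)/N}S_k^{\delta}.
\]
The right-hand side tends to $0$ because $S_k\to0$ and $E_k\le1$. This contradicts $A_k\to d(N)>0$.

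\emph{Step 2: choice of level and radius.} For the mass bound, fix the level $t_k:=d_k^{-1/(N-1)}$, which is at most $1$ for large $k$ by Step~1. Since $v_k$ is nonincreasing in $F^{o}$, $v_k\ge t_k$ on $W_{\rho}$ whenever $v_k=t_k$ on $\partial W_\rho$. Therefore $\kappa_N\rho^N t_k^q\le S_k$, so the superlevel set $\{v_k>t_k\}$ is a Wulff ball $W_{R_k}$ with
\[
R_k^N\le C S_k t_k^{-q}=C\,S_k d_k^{q/(N-1)}.
\]

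\emph{Step 3: splitting the integral.} Write $A_k=I_k+O_k$ according to $W_{R_k}$ and its complement.

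On the complement, $v_k\le t_k\le1$, so $\Ph(\lambda_\beta v_k^{N/(N-1)})\le C v_k^{p}$. Apply Lemma~\ref{lem:weighted} to $\min\{v_k,t_k\}$, whose gradient norm is at most $1$ and whose $L^q$ norm is at most that of $v_k$. This gives $O_k\le C S_k^\delta\to0$.

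On $W_{R_k}$, write $v_k=w_k+t_k$ with $w_k:=(v_k-t_k)_+\in W_0^{1,N}(W_{R_k})$ and $\norm{F(\nabla w_k)}_N^N\le E_k\le1$. Using $(x+y)^{N/(N-1)}\le x^{N/(N-1)}+\frac{N}{N-1}y(x+y)^{1/(N-1)}$ and $v_k\le d_k$, we get
\[
\lambda_\beta v_k^{N/(N-1)}\le\lambda_\beta w_k^{N/(N-1)}+C\,t_kd_k^{1/(N-1)}=\lambda_\beta w_k^{N/(N-1)}+C.
\]
The choice of $t_k$ is exactly what makes the error constant bounded. Bound $\Ph\le e^{s}$ and apply the singular anisotropic Moser--Trudinger inequality on Wulff balls (Adimurthi--Sandeep type with constant $\lambda_\beta$, valid in the anisotropic setting by convex symmetrization \cite{ref4}; the scaling $x\mapsto R_kx$ produces the volume factor $R_k^{N-\beta}$). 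This yields
\[
I_k\le C\int_{W_{R_k}}\frac{e^{\lambda_\beta w_k^{N/(N-1)}}}{F^{o}(x)^{\beta}}\dd x\le C R_k^{N-\beta}\le C\bigl(S_kd_k^{q/(N-1)}\bigr)^{\delta}.
\]

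\emph{Step 4: conclusion.} Combining the two regions, $d(N)+o(1)=A_k\le C(S_kd_k^{q/(N-1)})^\delta+o(1)$, which gives \eqref{eq:45}.

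The main obstacle is Step~3 on $W_{R_k}$: the inner estimate must use the critical constant with no room to spare. Cruder splittings, such as the level $t=1$ or absorbing the error into $(1+\varepsilon)$, produce only exponential lower bounds of the form $S_k\ge ce^{-Cd_k^{1/(N-1)}}$ rather than the polynomial bound. Getting a polynomial bound depends on taking the level exactly $d_k^{-1/(N-1)}$, so that the cross term stays bounded, and on the gradient constraint $E_k\le1$ holding without loss.
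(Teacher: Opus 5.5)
Your argument is correct, but it takes a different route from the paper.

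\textbf{The paper's route.} The paper gets the lemma in one line from the whole-space exact-growth inequality of \cite{ref8},
\[
\int_{\R^N}\Ph\bigl(\lambda_\beta v_k^{\frac{N}{N-1}}\bigr)\bigl(1+v_k^{q\delta/(N-1)}\bigr)^{-1}F^{o}(x)^{-\beta}\dd x\le CS_k^\delta .
\]
Since $v_k\le d_k$, this gives $c\le A_k\le CS_k^\delta\bigl(1+d_k^{q\delta/(N-1)}\bigr)$. Because $S_k\to0$, both \eqref{eq:45} and $d_k\to\infty$ follow at once, so no separate Step~1 is needed there.

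\textbf{Your route.} You instead reprove the needed consequence of the exact-growth bound for the particular function $v_k$, splitting at the level $t_k=d_k^{-1/(N-1)}$. Below the level you use the weighted power estimate. Above it you use the critical bounded-domain singular inequality plus dilation, with the radius of the superlevel Wulff ball controlled by the $L^q$ mass through radial monotonicity. This produces literally the same inequality, since $CS_k^\delta\bigl(1+d_k^{q\delta/(N-1)}\bigr)=CS_k^\delta+C\bigl(S_kd_k^{q/(N-1)}\bigr)^\delta$.

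\textbf{What each approach buys.} Your route is independent of the exact-growth theorem. It needs only the subcritical inequality behind Lemma~\ref{lem:weighted} and the classical Adimurthi--Sandeep-type inequality on Wulff balls. In effect it is the standard level-set proof of an exact-growth estimate, which becomes elementary for Wulff-symmetric nonincreasing functions. The cost is twofold. You depend on the symmetry and monotonicity of $v_k$ to make $\{v_k>t_k\}$ a ball with $R_k^N\le CS_kt_k^{-q}$. You also need the preliminary Step~1 to guarantee $t_k\le1$. The cited theorem, by contrast, applies to arbitrary functions with $\norm{F(\nabla u)}_N\le1$ and delivers both assertions simultaneously.

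Your closing remark is also right. Splitting at a fixed level such as $1$ leaves a cross term of order $d_k^{1/(N-1)}$ in the exponent, which yields only an exponential lower bound for $S_k$. The level $d_k^{-1/(N-1)}$ is what keeps that term bounded.
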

\begin{proof}
The exact-growth inequality of \cite{ref8} gives
\begin{equation}\label{eq:44}
 \int_{\R^N}\frac{\Ph(\lambda_\beta v_k^{\frac{N}{N-1}})}
 {1+v_k^{q\delta/(N-1)}}F^{o}(x)^{-\beta}\dd x\le CS_k^\delta.
\end{equation}
Since $0\le v_k\le d_k$,
\[
 0<c\le A_k\le CS_k^\delta
 \bigl(1+d_k^{q\delta/(N-1)}\bigr).
\]
Now $S_k\to0$, so \eqref{eq:45} follows, and it implies $d_k\to\infty$.
\end{proof}
\begin{lemma}[Concentration]\label{lem:4.4}
As Radon measures,
\begin{equation}\label{eq:43}
 F(\nabla v_k)^N\dd x\rightharpoonup\delta_0.
\end{equation}
\end{lemma}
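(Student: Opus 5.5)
The plan is to argue by contradiction with a Lions-type concentration--compactness argument adapted to the Wulff-symmetric setting. Since $E_k\to1$ and $S_k\to0$, the sequence $(v_k)$ is bounded in $\D$. Moreover, each $v_k$ is radially nonincreasing in $F^{o}$. Passing to a subsequence, we may assume $v_k\rightharpoonup v_0$ weakly in $\D$ and $F(\nabla v_k)^N\dd x\rightharpoonup\mu$ as Radon measures, with $\mu(\R^N)\le1$. Because $\norm{v_k}_q^q=S_k\to0$ and $v_k\to v_0$ a.e. (locally strongly in $L^N$ by Rellich on Wulff balls), Fatou gives $v_0=0$. The target is then $\mu=\delta_0$.

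The first step is to locate the mass of $\mu$. Radial monotonicity gives the pointwise decay $v_k(x)^q\le S_k/(\kappa_N F^{o}(x)^N)$, so $v_k\to0$ uniformly on $\R^N\setminus W_r$ for every $r>0$. I then localize away from the origin. Take a cutoff $\phi$ supported in $\R^N\setminus W_{r/2}$ and test the Euler--Lagrange equation \eqref{eq:38} with $\phi^N v_k$. The right-hand side becomes $o(1)$ there: $v_k\to0$ uniformly, $\Ph'(s)\le Cs^{m-1}e^{s}$, and the bound $\int v_k^p F^{o}(x)^{-\beta}\le C S_k^{\delta}$ from Lemma~\ref{lem:weighted} applies with $p=mN/(N-1)>q\delta$. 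The absorption term is nonnegative. The lower bound $\Lambda_k\ge c$ from \eqref{eq:41} lets me divide, and the gradient cross term $\int F(\nabla v_k)^{N-1}v_k\phi^{N-1}|\nabla\phi|$ is $o(1)$ by uniform convergence. This yields $\int \phi^N F(\nabla v_k)^N\to0$, so $\mu$ is supported at $\{0\}$ and $\mu=\mu(\{0\})\delta_0$.

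It remains to show $\mu(\{0\})=1$, and I expect this to be the main obstacle. Mass escaping to infinity is excluded by the localization above applied with the cutoff at large radii, since it treats $\R^N\setminus W_r$ uniformly. Suppose instead that $\mu(\{0\})=\gamma_0<1$. Write the radial profile on $W_r$ as $(v_k-V_k(r))_++V_k(r)$, where $V_k(r)\to0$. The Dirichlet energy of the truncated part on $W_r$ is then asymptotically at most $\gamma_0+o(1)$. The singular Moser--Trudinger inequality of Adimurthi--Sandeep type on Wulff balls, with sharp constant $\lambda_\beta=\delta\lambda_N$, gives a uniform bound on $\int_{W_r}\exp(s\lambda_\beta v_k^{N/(N-1)})F^{o}(x)^{-\beta s}$ for some $s>1$, because $\gamma_0^{1/(N-1)}<1$. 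This is the same margin computation already used in the proof of Lemma~\ref{lem:4.2}. By Vitali, the weighted integral of $\lambda_\beta v_k^{N/(N-1)}\Ph'(\cdot)$ over $W_r$ would then converge to its value at $v_0=0$, namely $0$. On the complement the same quantity is $o(1)$ by the uniform decay. Hence $J_k\to0$, and likewise $A_k\to0$ since $\Ph\le C s\Ph'$. This contradicts \eqref{eq:37}, which gives $A_k\to d(N)>0$. Therefore $\gamma_0=1$ and $F(\nabla v_k)^N\dd x\rightharpoonup\delta_0$.

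There is a delicate point. Here the energy is measured as the total mass of the limit measure near the origin, not as $\limsup_k\int_{W_r}F(\nabla v_k)^N$. So I will choose $r$ small and use $\int_{W_r\setminus W_{\epsilon}}F(\nabla v_k)^N\to0$ together with the localization to make the truncated energy close to $\gamma_0$. If the uniform higher-integrability step needs more care, a fallback is the blow-up bound \eqref{eq:45}. Combined with $S_k\to0$, it already forces $d_k\to\infty$, which is consistent with concentration.
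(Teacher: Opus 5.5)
Your proof is correct, and your second paragraph already contains a complete argument that differs from the paper's. Testing \eqref{eq:38} with $\phi^Nv_k$, where $\phi=1$ outside $W_r$, and dividing by $\Lambda_k\ge c$ gives $\int_{\R^N\setminus W_r}F(\nabla v_k)^N\dd x\to0$ for every $r>0$. The source term is $O(S_k^\delta)$, because $v_k\le V_k(r/2)\to0$ on the support, so only the lowest-order Taylor term of $\Ph$ matters and Lemma~\ref{lem:weighted} applies. The cross term is $O(V_k(r/2))$.

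Together with $E_k\to1$, this yields $\int_{W_r}F(\nabla v_k)^N\dd x\to1$ for all $r$. That is \eqref{eq:43} for the full sequence. So the case $\mu(\{0\})<1$, which you call the main obstacle, cannot occur once escape to infinity is excluded.

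Your third paragraph (truncation at $V_k(r)$, uniform subcriticality on $W_r$, vanishing of $A_k$, contradiction with \eqref{eq:37}) is essentially the paper's whole proof. The paper assumes $\int_{W_R}F(\nabla v_k)^N\dd x\le1-\eta$ along a subsequence. It applies the weighted exact-growth estimate to $(v_k-V_k(R))_+$ on $W_R$ and Lemma~\ref{lem:weighted} outside, and concludes $A_k\to0$. It never invokes the Euler--Lagrange equation.

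The two routes trade off as follows. The paper's route uses only the constraint, $S_k\to0$ and $A_k\to d(N)>0$, at the price of a critical singular Moser--Trudinger estimate with margin on Wulff balls. Your route needs Lemma~\ref{lem:4.2}, and hence the differentiability appendix and the multiplier bound \eqref{eq:41}. After that it uses no exponential estimate at all: only the radial decay $V_k(\rho)^q\le S_k/(\kappa_N\rho^N)$ and the weighted power bound. It also proves tightness directly rather than by contradiction.
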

\begin{proof}
Write $v_k(x)=V_k(r)$ with $r=F^{o}(x)$. For every fixed $R>0$, radial
monotonicity gives
\[
 S_k\ge\int_{W_R}v_k^q\dd x\ge\kappa_N R^N V_k(R)^q,
\]
whence $V_k(R)\to0$. Suppose that \eqref{eq:43} fails. Then, after passing
to a subsequence, there exist $R>0$ and $\eta\in(0,1)$ such that
\[
 \int_{W_R}F(\nabla v_k)^N\dd x\le1-\eta.
\]
Set $w_k:=(v_k-V_k(R))_+\in W_0^{1,N}(W_R)$. Since $V_k(R)\to0$ and
\[
 \int_{W_R}F(\nabla w_k)^N\dd x\le1-\eta,
\]
we may choose $\varepsilon_0>0$ so that the critical exponential of
$v_k=w_k+V_k(R)$ on $W_R$ is uniformly subcritical after the elementary estimate
\[
 (x+y)^{\frac{N}{N-1}}\le(1+\varepsilon_0)x^{\frac{N}{N-1}}+C_{\varepsilon_0}y^{\frac{N}{N-1}}.
\]
The weighted exact-growth estimate on $W_R$ then yields
\[
 \int_{W_R}\frac{\Ph(\lambda_\beta v_k^{\frac{N}{N-1}})}{F^{o}(x)^\beta}\dd x
 \le C\norm{w_k}_q^{q\delta}+o(1)\le CS_k^\delta+o(1)\longrightarrow0.
\]
On $\R^N\setminus W_R$ one has $0\le v_k\le V_k(R)\to0$ uniformly.
Since the first nonzero Taylor term of $\Ph$ has degree $m$ and $\frac{mN}{N-1}>q\delta$,
Lemma~\ref{lem:weighted} gives
\[
 \int_{\R^N\setminus W_R}\frac{\Ph(\lambda_\beta v_k^{\frac{N}{N-1}})}{F^{o}(x)^\beta}\dd x
 \le C\int_{\R^N}\frac{v_k^{\frac{mN}{N-1}}}{F^{o}(x)^\beta}\dd x
 \le CS_k^\delta\longrightarrow0.
\]
Thus $A_k=\J(v_k)\to0$, contradicting \eqref{eq:37}. Therefore the Dirichlet
energy measures converge to the unit point mass at the origin.
\end{proof}

\subsection{Inner blow-up profile and level-truncation estimate}\label{sec:5}
Define the concentration scale $\ell_k$ by
\begin{equation}\label{eq:46}
 \ell_k^{N-\beta}:=\frac{(N-1)\Lambda_k}{N\lambda_\beta}
 d_k^{-\frac{N}{N-1}}\e^{-\lambda_\beta d_k^{\frac{N}{N-1}}},
\end{equation}
and set
\[
\psi_k(x):=\frac{v_k(\ell_k x)}{d_k},\qquad
 \phi_k(x):=d_k^{1/(N-1)}\bigl(v_k(\ell_k x)-d_k\bigr).
\]
\begin{lemma}\label{lem:5.1}
The absorption term is asymptotically negligible on the inner scale.
More precisely, if
\[
 M_k:=\frac{q\delta A_k}{S_k\Lambda_k}d_k^q\ell_k^N,
\]
then $M_k\to0$.
\end{lemma}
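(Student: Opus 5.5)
The plan is to bound each factor of $M_k$ by a power of $d_k$, except $\ell_k^N$. By \eqref{eq:46}, $\ell_k^N$ is exponentially small in $d_k^{N/(N-1)}$. Since Lemma~\ref{lem:4.3} gives $d_k\to\infty$, the exponential factor should then win.

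First, raise \eqref{eq:46} to the power $N/(N-\beta)=1/\delta$ to get
\[
 \ell_k^N=\left(\frac{(N-1)\Lambda_k}{N\lambda_\beta}\right)^{1/\delta}
 d_k^{-\frac{N}{(N-1)\delta}}\,
 \e^{-\frac{\lambda_\beta}{\delta} d_k^{\frac{N}{N-1}}}
 =\left(\frac{(N-1)\Lambda_k}{N\lambda_\beta}\right)^{1/\delta}
 d_k^{-\frac{N}{(N-1)\delta}}\,
 \e^{-\lambda_N d_k^{\frac{N}{N-1}}},
\]
where the second form uses $\lambda_\beta=\delta\lambda_N$ from \eqref{eq:1}.

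Next, control the prefactor $q\delta A_k/(S_k\Lambda_k)$.
\begin{itemize}
\item[(a)] $A_k\to d(N)<\infty$ by \eqref{eq:37}, so $A_k$ is bounded.
\item[(b)] $1/\Lambda_k\le 1/c$ by the lower bound in \eqref{eq:41}.
\item[(c)] Lemma~\ref{lem:4.3}, i.e.\ \eqref{eq:45}, gives $S_k^{-1}\le C d_k^{q/(N-1)}$.
\item[(d)] The upper bound in \eqref{eq:41} gives $\Lambda_k^{1/\delta}\le C(1+d_k^{N/(N-1)})^{1/\delta}\le C d_k^{N/((N-1)\delta)}$ for large $k$, using $d_k\to\infty$.
\end{itemize}
Combining (a)--(d) with the formula for $\ell_k^N$ yields
\[
 0\le M_k\le C\, d_k^{\,q+\frac{q}{N-1}}\;
 d_k^{\frac{N}{(N-1)\delta}}\,d_k^{-\frac{N}{(N-1)\delta}}\;
 \e^{-\lambda_N d_k^{\frac{N}{N-1}}}
 = C\, d_k^{\frac{qN}{N-1}}\,\e^{-\lambda_N d_k^{\frac{N}{N-1}}}.
\]

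Finally, the right-hand side is a fixed polynomial in $d_k$ times $\e^{-\lambda_N d_k^{N/(N-1)}}$. Since $d_k\to\infty$ by Lemma~\ref{lem:4.3}, it tends to zero, so $M_k\to0$.

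There is no genuine obstacle. The only points needing care are which bounds are used where: the mass lower bound \eqref{eq:45} controls $1/S_k$, and the two-sided bound \eqref{eq:41} handles both the $1/\Lambda_k$ in the prefactor and the $\Lambda_k^{1/\delta}$ in $\ell_k^N$. In particular, all constants must be uniform in $k$, and every factor other than the exponential must grow at most polynomially in $d_k$.
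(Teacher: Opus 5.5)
Your proposal is correct and follows essentially the same route as the paper. The paper bounds $M_k$ via \eqref{eq:46}, \eqref{eq:45}, \eqref{eq:37} and \eqref{eq:41} by $Cd_k^{\alpha}\exp\bigl(-\tfrac{N}{N-\beta}\lambda_\beta d_k^{\frac{N}{N-1}}\bigr)=Cd_k^{\alpha}\exp\bigl(-\lambda_N d_k^{\frac{N}{N-1}}\bigr)$, and your computation carries this out explicitly, with $\alpha=\frac{qN}{N-1}$ after the $\Lambda_k^{1/\delta}$ factor cancels against $d_k^{-\frac{N}{(N-1)\delta}}$.
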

\begin{proof}
By \eqref{eq:46}, \eqref{eq:45}, \eqref{eq:37}, and \eqref{eq:41},
there exist constants $C>0$ and $\alpha>0$ such that $M_k$ is bounded
above by $Cd_k^\alpha$ times
\[
 \exp\left(-\frac{N}{N-\beta}\lambda_\beta d_k^{\frac{N}{N-1}}\right),
\]
which tends to zero.
\end{proof}
Wulff radiality reduces the limiting Liouville equation to a radial
initial-value problem, which determines the profile explicitly.
\begin{proposition}[Inner blow-up profile]\label{prop:5.2}
Up to a subsequence,
\[
 \psi_k\to1,\qquad\phi_k\to\phi
\]
locally, with $C^1$ convergence away from the origin, where
\begin{equation}\label{eq:48}
 -Q_N\phi=F^{o}(x)^{-\beta}\e^{\frac{N\lambda_\beta}{N-1}\phi}\qquad\text{in }\R^N,
\end{equation}
$\phi\le0$, $\phi(0)=0$, and
\begin{equation}\label{eq:49}
 \phi(x)=-\frac{N-1}{\lambda_\beta}\log\left[
 1+\left(\frac{K_N}{N-\beta}\right)^{1/(N-1)}
 F^{o}(x)^{\frac{N-\beta}{N-1}}\right].
\end{equation}
Moreover,
\begin{equation}\label{eq:50}
 \int_{\R^N}F^{o}(x)^{-\beta}\e^{\frac{N\lambda_\beta}{N-1}\phi(x)}\dd x=1.
\end{equation}
\end{proposition}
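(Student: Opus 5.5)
We first rescale the Euler--Lagrange equation \eqref{eq:38} to the inner scale $\ell_k$, then pass to the limit using radial (Wulff-symmetric) ODE estimates, and finally identify the limit by solving the limiting radial initial-value problem explicitly.

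\emph{Step 1: the rescaled equations.} Put $x=\ell_k y$ and divide \eqref{eq:38} by $\Lambda_k$. Since $Q_N$ is $(N-1)$-homogeneous, $\psi_k$ satisfies
\[
 -Q_N\psi_k=d_k^{-N}\times(\text{right side}),
\]
so $-Q_N\psi_k\to0$ once the right side of the $\phi_k$ equation is bounded. For $\phi_k$ we use
\[
 \lambda_\beta v_k^{\frac N{N-1}}=\lambda_\beta d_k^{\frac N{N-1}}(1+\phi_k/d_k^{\frac N{N-1}})^{\frac N{N-1}}
 =\lambda_\beta d_k^{\frac N{N-1}}+\tfrac{N\lambda_\beta}{N-1}\phi_k+o(1)
\]
wherever $\phi_k$ is locally bounded. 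We also need $\Ph'(s)=\e^s(1+O(s^{m-1}\e^{-s}))$. Together with the choice \eqref{eq:46} of $\ell_k$, this gives
\[
 -Q_N\phi_k=F^{o}(y)^{-\beta}\psi_k^{\frac1{N-1}}\e^{\frac{N\lambda_\beta}{N-1}\phi_k+o(1)}-M_k\psi_k^{q-1},
\]
where $M_k\to0$ by Lemma~\ref{lem:5.1}. Note that $\phi_k\le0$ and $\phi_k(0)=0$, since $d_k=v_k(0)$ is the maximum.

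\emph{Step 2: local bounds and compactness.} This is done in the radial variable $r=F^{o}(y)$. Writing $\phi_k=\Phi_k(r)$, the equation becomes
\[
 -(r^{N-1}|\Phi_k'|^{N-2}\Phi_k')'=r^{N-1}\times\text{RHS},
\]
with vanishing flux at $0$ by Lemma~\ref{lem:4.2}. Since $\phi_k\le0$ and $\psi_k\le1$, the source is bounded by $r^{N-1-\beta}$ up to the negligible term $M_k$. Integrating gives
\[
 r^{N-1}|\Phi_k'|^{N-1}\le Cr^{N-\beta}.
\]
Hence $|\Phi_k'|\le Cr^{(1-\beta)/(N-1)}$, which is integrable at $0$ because $\beta<N$. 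So $\phi_k$ is locally uniformly bounded and equicontinuous, and $\phi_k'$ is equicontinuous away from $0$ (the integrated equation controls its oscillation). Arzel\`a--Ascoli then gives local uniform convergence of $\phi_k$ and $C^1$ convergence away from $0$. Since $\psi_k=1+\phi_k d_k^{-N/(N-1)}$ and $d_k\to\infty$ by Lemma~\ref{lem:4.3}, we get $\psi_k\to1$ locally uniformly. Passing to the limit in the radial integral equation yields \eqref{eq:48} with $\phi\le0$, $\phi(0)=0$, and zero flux at the origin.

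\emph{Step 3: identification of the profile.} The radial problem with $\Phi(0)=0$ and zero flux has a unique solution. For uniqueness we would use a fixed-point argument for the integral equation near $0$, where the singular weight is integrable, followed by standard ODE continuation. It therefore suffices to verify that \eqref{eq:49} solves it. We compute the flux of the candidate via $F(\nabla\Phi(F^o))=|\Phi'|$, which uses the identities $F(\nabla F^o)=1$ and $F_\xi(\nabla F^o)\cdot x=F^o$. This checks both the equation and the constant $(K_N/(N-\beta))^{1/(N-1)}$.

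\emph{Step 4: the mass identity \eqref{eq:50}.} The mass integral equals $K_N$ times the limiting flux $\lim_{r\to\infty}r^{N-1}|\Phi'(r)|^{N-1}$. From \eqref{eq:49}, $r\Phi'(r)\to-(N-\beta)/\lambda_\beta$. Using $\lambda_\beta^{N-1}=\delta^{N-1}N^{N-1}K_N$, the flux equals $K_N^{-1}$, so the mass is $1$.

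The main obstacle is Step 2 near the origin: the singular weight $F^{o}(y)^{-\beta}$, possibly with $\beta\ge1$, means $\phi_k$ need not be $C^1$ at $0$. The radial flux bound settles this, but the vanishing flux at $0$ must be used uniformly in $k$, and that requires the finiteness argument of Lemma~\ref{lem:4.2}.
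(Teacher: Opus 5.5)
Your proposal is correct and follows the paper's proof essentially step for step: the rescaled equation \eqref{eq:51}, the flux bound $r^{N-1}|\Phi_k'|^{N-1}\le Cr^{N-\beta}$ from zero flux at the origin, Arzel\`a--Ascoli plus domination by $r^{-\beta}$ in the integrated radial equation, uniqueness for the radial initial-value problem, and verification of \eqref{eq:49}. The one variation is that you get \eqref{eq:50} from the limiting flux $\lim_{r\to\infty}r^{N-1}|\Phi'|^{N-1}=K_N^{-1}$ rather than evaluating the Beta-type integral directly, and the two are equivalent; one small correction is that the radial reduction uses $F_\xi(\nabla F^{o}(x))=x/F^{o}(x)$ together with Euler's relation $x\cdot\nabla F^{o}(x)=F^{o}(x)$, not $F_\xi(\nabla F^{o})\cdot x=F^{o}$, which fails for non-Euclidean $F$.
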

\begin{proof}
Write $r=F^{o}(x)$ and regard $\psi_k$ and $\phi_k$ as radial profiles.
The rescaled equation is
\begin{equation}\label{eq:51}
 -Q_N\phi_k+M_k\psi_k^{q-1}
 =r^{-\beta}\e^{-\lambda_\beta d_k^{\frac{N}{N-1}}}
 \Ph'(\lambda_\beta d_k^{\frac{N}{N-1}}\psi_k^{\frac{N}{N-1}})\psi_k^{\frac{1}{N-1}}.
\end{equation}
Because $0\le\psi_k\le1$ and $\Ph'(s)\le\e^s$, its right-hand side is at
most $r^{-\beta}$. The profiles are nonincreasing and have zero flux at
the origin. Integrating the radial equation and dropping the nonnegative
absorption term yields
\[
 0\le r^{N-1}(-\phi_k'(r))^{N-1}\le\frac{r^{N-\beta}}{N-\beta}.
\]
Consequently, with $\gamma=(N-\beta)/(N-1)>0$,
\[
 0\le-\phi_k(r)\le Cr^\gamma,\qquad|\phi_k'(r)|\le Cr^{\gamma-1}.
\]
These estimates give uniform convergence along a subsequence on every
bounded radial interval, including the origin. Since $\psi_k=1+d_k^{-\frac{N}{N-1}}\phi_k$,
we also have $\psi_k\to1$ locally uniformly. The right-hand side of
\eqref{eq:51} converges to $r^{-\beta}\e^{\frac{N\lambda_\beta}{N-1}\phi}$ and is
dominated by $r^{-\beta}$; the absorption term tends to zero by
Lemma~\ref{lem:5.1}. Passing to the integrated equation gives
\begin{equation}\label{eq:52}
 r^{N-1}(-\phi'(r))^{N-1}
 =\int_0^r s^{N-1-\beta}\e^{\frac{N\lambda_\beta}{N-1}\phi(s)}\dd s,\qquad\phi(0)=0.
\end{equation}
Convergence of the fluxes is uniform on compact intervals, and hence
derivative convergence is uniform away from zero.

The initial-value problem \eqref{eq:52} has at most one locally bounded
nonincreasing solution. Indeed, near zero the integrals on its right-hand
side are bounded above and below by positive multiples of $r^{N-\beta}$.
For two solutions, the mean-value estimate for the $(N-1)$st root therefore gives
\[
 |\phi_1'(r)-\phi_2'(r)|
 \le Cr^{\gamma-1}\sup_{0\le s\le r}|\phi_1(s)-\phi_2(s)|.
\]
Integration gives equality on a sufficiently short interval. Uniqueness
then extends away from zero by Gronwall's inequality applied to the same
integral equation. Direct substitution shows that \eqref{eq:49} solves
\eqref{eq:52}. Finally, with $B=(K_N/(N-\beta))^{1/(N-1)}$,
\[
 \int_{\R^N}F^{o}(x)^{-\beta}\e^{\frac{N\lambda_\beta}{N-1}\phi}\dd x
 =K_N\int_0^\infty\frac{r^{N-1-\beta}}{(1+Br^\gamma)^N}\dd r
 =\frac{K_N B^{-(N-1)}}{N-\beta}=1.
\]
This proves both the profile and the mass assertion.
\end{proof}
The next estimate controls the Dirichlet energy of the level truncations
used in the multiplier and source-term limits.
\begin{lemma}[Sharp energy estimate for level truncations]\label{lem:5.3}
For every $L>1$,
\begin{equation}\label{eq:53}
 \limsup_{k\to\infty}\int_{\R^N}
 F\left(\nabla\min\left\{v_k,\frac{d_k}{L}\right\}\right)^N\dd x\le\frac1L.
\end{equation}
\end{lemma}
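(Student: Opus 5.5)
Testing the Euler--Lagrange equation with the truncation from above and using the inner profile to bound the nonlinear mass in the core is the plan. Set $T_k:=\min\{v_k,d_k/L\}$, which lies in $\D$. Since $\nabla T_k=\nabla v_k$ on $\{v_k<d_k/L\}$ and vanishes elsewhere, and $T_k=d_k/L$ on $\{v_k\ge d_k/L\}$, testing \eqref{eq:38} with $T_k$ gives
\[
 \Lambda_k\int_{\R^N}F(\nabla T_k)^N\dd x+\frac{q\delta A_k}{S_k}\int_{\R^N}v_k^{q-1}T_k\dd x
 =\frac{N\lambda_\beta}{N-1}\int_{\R^N}\Ph'(\lambda_\beta v_k^{\frac{N}{N-1}})v_k^{\frac1{N-1}}T_k\,F^{o}(x)^{-\beta}\dd x.
\]
The absorption term is nonnegative and can be dropped. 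On the right, I will use $T_k\le v_k$ everywhere and $T_k=d_k/L$ on $\{v_k\ge d_k/L\}$, which splits the right-hand side as
\[
 \frac{d_k}{L}\cdot\frac{N\lambda_\beta}{N-1}\int_{\{v_k\ge d_k/L\}}\Ph'(\cdots)v_k^{\frac1{N-1}}F^{o}(x)^{-\beta}\dd x
 +\frac{N}{N-1}\int_{\{v_k<d_k/L\}}\lambda_\beta v_k^{\frac{N}{N-1}}\Ph'(\cdots)F^{o}(x)^{-\beta}\dd x.
\]

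For the first (core) term, I will rescale $x=\ell_kx'$ and use \eqref{eq:46}. The integrand then becomes $\Lambda_k d_k^{-1}$ times the right-hand side of \eqref{eq:51}, times $\psi_k^{\cdot}$ factors, so the first term equals
\[
 \frac{\Lambda_k}{L}\int_{\{\psi_k\ge1/L\}}|x'|_{F^o}^{-\beta}\e^{-\lambda_\beta d_k^{N/(N-1)}}\Ph'(\lambda_\beta d_k^{\frac{N}{N-1}}\psi_k^{\frac N{N-1}})\psi_k^{\frac1{N-1}}\dd x'.
\]
This integral is exactly the total flux of $-Q_N\phi_k+M_k\psi_k^{q-1}$ out of the set $\{\psi_k\ge1/L\}$, which is a Wulff ball $W_{R_k}$ by radial monotonicity. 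So it equals $R_k^{N-1}(-\phi_k'(R_k))^{N-1}K_N$ plus a nonnegative absorption part of size $o(1)$, bounded by $M_k$ times the volume. Cleaner still, this integral equals the full integral $\int_{\R^N}$ of the same density minus the outer part. Dividing the whole identity by $\Lambda_k$, the first term is therefore at most $\frac1L\cdot\Lambda_k^{-1}\cdot(\text{total nonlinear mass in the core})$. I will show the core mass over $\Lambda_k$ is at most $1+o(1)$, by writing it as $\frac{N}{N-1}\cdot\frac{1}{d_k}$ times $\lambda_\beta$-weighted mass and comparing with \eqref{eq:40}. The cleanest route is to test \eqref{eq:38} with $v_k$ itself: this gives $\Lambda_kE_k+q\delta A_k=\frac N{N-1}J_k$. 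On $\{v_k\ge d_k/L\}$ we have $d_k\le Lv_k$, so the core term is at most $\frac1L$ times, roughly, $\frac{N}{N-1}J_k^{\rm core}$ divided by $v_k/d_k$. This bound is too crude; instead I will rely on the inner profile, where $\psi_k\to1$ locally, so the bulk of the core mass sits where $v_k=d_k(1+o(1))$.

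The concrete strategy for the core term is as follows. For fixed $R$, the mass on $W_{R\ell_k}$ tends to $\int_{W_R}(\cdots)$ by Proposition~\ref{prop:5.2}, and by \eqref{eq:50} this mass is $1-o_R(1)$ after normalization by $\Lambda_k$. Hence $\Lambda_k^{-1}\cdot\frac{N\lambda_\beta}{N-1}\int_{\{v_k\ge d_k/L\}}\Ph'v_k^{1/(N-1)}d_k\,F^{o}(x)^{-\beta}\dd x\ge1-o(1)$. For the upper bound I will use $v_k\le d_k$ on the core with the identity above, writing core $\le$ total $\le$ (by \eqref{eq:40}) $\Lambda_kE_k+q\delta A_k$ times $d_k/v_k$-corrected. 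The key obstacle is showing that the non-profile part of the core, where $d_k/L\le v_k\ll d_k$, carries negligible weighted mass compared with $\Lambda_k$. To handle this I will first try a symmetric argument: test \eqref{eq:38} also with $(v_k-d_k/L)_+$. The two energies sum to $E_k\to1$. Each energy is bounded below by its profile-based contribution: the lower part by the tail $\{v_k<d_k/L\}$ contribution, and the upper part by $(1-1/L)$ times the core mass, using $(v_k-d_k/L)\ge(1-1/L-o(1))v_k$ on $W_{R\ell_k}$. Concretely, testing with $(v_k-d_k/L)_+$ gives upper energy $\ge(1-\frac1L)(1-o_R(1))-o(1)$. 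Since the total energy is at most $1$, the truncated energy is at most $\frac1L+o(1)$, which is \eqref{eq:53}. This complementary-truncation step is the argument I will actually carry out; the earlier direct estimate is superseded by it.
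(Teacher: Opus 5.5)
Your complementary-truncation argument is the same as the paper's proof. You test \eqref{eq:38} with $w_{k,L}=(v_k-d_k/L)_+$ and bound the right-hand side from below on $W_{R\ell_k}$ by $(1-\frac1L)\int_{W_R}F^{o}(y)^{-\beta}\e^{\frac{N\lambda_\beta}{N-1}\phi}\dd y+o(1)$ via Proposition~\ref{prop:5.2}. You then let $R\to\infty$ using \eqref{eq:50} and subtract from $E_k\le1$, since the energies of $w_{k,L}$ and $\min\{v_k,d_k/L\}$ add up. What is missing is a justification of the ``$-o(1)$''. After testing with $w_{k,L}$ and dividing by $\Lambda_k$, the absorption term
\[
 \frac{q\delta A_k}{S_k\Lambda_k}\int_{\{v_k>d_k/L\}}v_k^{q-1}w_{k,L}\dd x
\]
lies on the same side as the energy you want to bound from below. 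Unlike in your first test with $\min\{v_k,d_k/L\}$, it therefore cannot be dropped, and you must show that it tends to zero.

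This is not automatic. The coefficient $A_k/S_k$ blows up. The trivial bound $\int v_k^{q-1}w_{k,L}\dd x\le S_k$ only gives $q\delta A_k/\Lambda_k=O(1)$, because at this stage only $\Lambda_k\ge c$ is available from \eqref{eq:41}. The growth $\Lambda_k\to\infty$ comes from Lemma~\ref{lem:5.4}, whose proof uses the present lemma. Lemma~\ref{lem:5.1} controls only the part of the integral inside $W_{R\ell_k}$, not the whole superlevel set $\{v_k>d_k/L\}=W_{R_{k,L}}$.

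The paper supplies the missing estimate as follows. H\"older's inequality on the radial annulus $W_{R_0}\setminus W_{R_{k,L}}$, together with $E_k\le1$ and $V_k(R_0)\to0$, gives
\[
 \frac{d_k}{L}-V_k(R_0)\le C\Bigl(\log\frac{R_0}{R_{k,L}}\Bigr)^{(N-1)/N}.
\]
Hence $R_{k,L}\le C\e^{-c_Ld_k^{N/(N-1)}}$ and $\int_{\{v_k>d_k/L\}}v_k^q\dd x\le Cd_k^q\e^{-Nc_Ld_k^{N/(N-1)}}$. By the mass lower bound $S_k\ge cd_k^{-q/(N-1)}$ from \eqref{eq:45}, this is $o(S_k)$. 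Therefore the absorption term divided by $\Lambda_k$ is at most $\frac{q\delta A_k}{\Lambda_k}\cdot\frac1{S_k}\int_{\{v_k>d_k/L\}}v_k^q\dd x$, which tends to zero. With this step inserted, your argument is complete.
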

\begin{proof}
Let $R_{k,L}$ be determined by $V_k(R_{k,L})=d_k/L$. Fix $R_0>0$.
Since $V_k(R_0)\to0$, H\"older's inequality on the annulus
$W_{R_0}\setminus W_{R_{k,L}}$ gives
\[
 \frac{d_k}{L}-V_k(R_0)\le
 C\left(\int_{W_{R_0}\setminus W_{R_{k,L}}}F(\nabla v_k)^N\dd x\right)^{1/N}
 \left(\log\frac{R_0}{R_{k,L}}\right)^{(N-1)/N}.
\]
Since $E_k\le1$, this implies
\begin{equation}\label{eq:54}
 R_{k,L}\le C\e^{-c_Ld_k^{\frac{N}{N-1}}}.
\end{equation}
Because $0\le v_k\le d_k$ and $\{v_k>d_k/L\}=W_{R_{k,L}}$,
\[
 \int_{\{v_k>d_k/L\}}v_k^q\dd x\le Cd_k^q\e^{-c_LNd_k^{\frac{N}{N-1}}}.
\]
Together with \eqref{eq:45}, which gives $S_k\ge cd_k^{-q/(N-1)}$, we obtain
\begin{equation}\label{eq:55}
 \frac1{S_k}\int_{\{v_k>d_k/L\}}v_k^q\dd x\longrightarrow0.
\end{equation}
Set $w_{k,L}:=(v_k-d_k/L)_+$. Testing \eqref{eq:38} with $w_{k,L}$
and dividing by $\Lambda_k$ yields
\begin{align*}
 &\int_{\{v_k>d_k/L\}}F(\nabla v_k)^N\dd x+
 \frac{q\delta A_k}{S_k\Lambda_k}\int_{\{v_k>d_k/L\}}v_k^{q-1}w_{k,L}\dd x\\
 &\qquad=\frac{N\lambda_\beta}{(N-1)\Lambda_k}\int_{\{v_k>d_k/L\}}
 \Ph'(\lambda_\beta v_k^{\frac{N}{N-1}})v_k^{\frac{1}{N-1}}w_{k,L}F^{o}(x)^{-\beta}\dd x.
\end{align*}
The second term on the left tends to zero by \eqref{eq:55} and the lower
bound for $\Lambda_k$. On $W_{R\ell_k}$, one has $v_k/d_k\to1$ locally
after the inner rescaling, and the normalized nonlinear density converges
to $F^{o}(y)^{-\beta}\e^{\frac{N\lambda_\beta}{N-1}\phi(y)}\dd y$. Hence, for every fixed $R>0$,
\[
 \liminf_{k\to\infty}\int_{\R^N}F(\nabla w_{k,L})^N\dd x
 \ge\left(1-\frac1L\right)
 \int_{W_R}F^{o}(y)^{-\beta}\e^{\frac{N\lambda_\beta}{N-1}\phi(y)}\dd y.
\]
Letting $R\to\infty$ and using \eqref{eq:50} gives
\[
 \liminf_{k\to\infty}\int_{\R^N}F(\nabla w_{k,L})^N\dd x\ge1-\frac1L.
\]
Finally, $\nabla w_{k,L}$ and $\nabla\min\{v_k,d_k/L\}$ have disjoint
supports and their $N$-energies add up to $E_k\to1$. This proves \eqref{eq:53}.
\end{proof}
We now use the truncation estimate to determine the leading terms of
$J_k$ and $\Lambda_k$.
\begin{lemma}[Nonlinear-mass and multiplier asymptotics]\label{lem:5.4}
One has
\begin{equation}\label{eq:56}
 \frac{J_k}{\lambda_\beta d_k^{\frac{N}{N-1}} A_k}\longrightarrow1
\end{equation}
and
\begin{equation}\label{eq:57}
 \frac{\Lambda_k}{A_k d_k^{\frac{N}{N-1}}}\longrightarrow \frac{N\lambda_\beta}{N-1}.
\end{equation}
\end{lemma}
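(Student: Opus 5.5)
The second assertion follows from the first. By \eqref{eq:40}, $\Lambda_k E_k=\frac{N}{N-1}J_k-q\delta A_k$ (here $p=N/(N-1)$). Since $E_k\to1$, $A_k\to d(N)\in(0,\infty)$ and $d_k\to\infty$, dividing by $A_kd_k^{N/(N-1)}$ turns \eqref{eq:56} into \eqref{eq:57}. So the plan is to prove \eqref{eq:56}, that is,
\[
 J_k=\lambda_\beta d_k^{\frac{N}{N-1}}A_k\,(1+o(1)).
\]

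\emph{Upper bound.} I split $\R^N$ into $\{v_k>d_k/L\}$ and $\{v_k\le d_k/L\}$, and use the pointwise inequality $s\Ph'(s)\le(s+m)\Ph(s)$ from \eqref{eq:42}.

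On $\{v_k>d_k/L\}$ we have $\lambda_\beta v_k^{N/(N-1)}\le\lambda_\beta d_k^{N/(N-1)}$. So this part of $J_k$ is at most $(\lambda_\beta d_k^{N/(N-1)}+m)A_k$.

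On $\{v_k\le d_k/L\}$ the integrand is bounded by $(\lambda_\beta (d_k/L)^{N/(N-1)}+m)$ times the density of the functional evaluated at $\min\{v_k,d_k/L\}$. I need $\J(\min\{v_k,d_k/L\})=O(1)$, uniformly in $k$. Lemma~\ref{lem:5.3} gives this function Dirichlet energy at most $1/L+o(1)<1$, and its $L^q$ norm is at most $S_k^{1/q}\to0$. The exact-growth estimate \eqref{eq:44}, applied after normalizing the gradient (uniformly subcritical), or simply the subcritical bound of \cite{ref8}, then gives a bound $CS_k^\delta(1+\dots)=O(1)$. Multiplying, the second part of $J_k$ is $O(L^{-N/(N-1)}d_k^{N/(N-1)})$. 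Dividing by $\lambda_\beta d_k^{N/(N-1)}A_k$ and letting $k\to\infty$ and then $L\to\infty$ gives $\limsup\le1$. Here we use $A_k\to d(N)>0$.

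\emph{Lower bound.} I restrict $J_k$ to the inner ball $W_{R\ell_k}$ and change variables to $x=\ell_ky$. By \eqref{eq:46}, $\ell_k^{N-\beta}\e^{\lambda_\beta d_k^{N/(N-1)}}=\frac{(N-1)\Lambda_k}{N\lambda_\beta}d_k^{-N/(N-1)}$. Proposition~\ref{prop:5.2} ($\psi_k\to1$, $\phi_k\to\phi$ uniformly on compacts) and dominated convergence then give
\[
 J_k\ge (1+o(1))\,\frac{(N-1)\Lambda_k}{N}\int_{W_R}F^{o}(y)^{-\beta}\e^{\frac{N\lambda_\beta}{N-1}\phi}\dd y.
\]
This holds because $\lambda_\beta v_k^{N/(N-1)}\sim\lambda_\beta d_k^{N/(N-1)}$ there, and $\Ph'(s)=\e^s(1+o(1))$ as $s\to\infty$. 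Letting $R\to\infty$ and using \eqref{eq:50} yields $J_k\ge(1+o(1))\frac{N-1}{N}\Lambda_k$. Combining this with \eqref{eq:40} gives $\Lambda_kE_k\ge\Lambda_k(1+o(1))-q\delta A_k$, which only returns information about $\Lambda_k$ itself and does not close the argument.

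To close the loop, I would instead bound $A_k$ from above on the outer region. By the same truncation argument, the contribution to $A_k$ from $\{v_k\le d_k/L\}$ is not small; it is only $O(1)$. The cleaner route is the identity obtained by testing \eqref{eq:38} with $w_{k,L}$, as in the proof of Lemma~\ref{lem:5.3}. This shows that the inner region carries flux $\Lambda_k(1-1/L)$. Comparing the $J$-density with the $A$-density on $\{v_k>d_k/L\}$, they differ by the factor $\lambda_\beta v_k^{N/(N-1)}$, which lies in $[\lambda_\beta (d_k/L)^{N/(N-1)},\lambda_\beta d_k^{N/(N-1)}]$. Since the inner blow-up region carries almost all of the nonlinear mass at scale $\ell_k$, this should give
\[
 A_k^{\mathrm{in}}\approx\frac{N-1}{N\lambda_\beta}\Lambda_kd_k^{-N/(N-1)}.
\]

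\emph{Main obstacle.} The hardest step is showing that $A_k$ is asymptotically carried by the inner region, or at least that $A_k\le\frac{N-1}{N\lambda_\beta}\Lambda_kd_k^{-N/(N-1)}(1+o(1))$. I would first try Lemma~\ref{lem:5.3} with $L$ close to $1$, together with the subcritical estimate, to control the outer part of $A_k$ relative to $J_k/d_k^{N/(N-1)}$. If that fails, I would use the identity \eqref{eq:40} together with the upper bound above to pin down the constants.
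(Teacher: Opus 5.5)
Your reduction of \eqref{eq:57} to \eqref{eq:56} through \eqref{eq:40} is correct. The upper bound is also correct, and it needs no splitting: $0\le v_k\le d_k$ and \eqref{eq:42} give $J_k\le(\lambda_\beta d_k^{\frac{N}{N-1}}+m)A_k$ globally.

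The gap is the lower bound in \eqref{eq:56}, which you never obtain. Your inner-ball estimate $J_k\ge(1+o(1))\frac{N-1}{N}\Lambda_k$ already follows from \eqref{eq:40}, since $\frac{N}{N-1}J_k\ge\Lambda_kE_k$, so it adds nothing. Your attempt to close the loop then rests on a false claim: that the contribution of $\{v_k\le d_k/L\}$ to $A_k$ ``is not small; it is only $O(1)$.'' It is $o(1)$. Put $z_k=\min\{v_k,d_k/L\}$ and $\vartheta_k=\norm{F(\nabla z_k)}_N$. Lemma~\ref{lem:5.3} gives $\vartheta_k^N\le L^{-1}+o(1)$, which stays uniformly below one for fixed $L>1$, while $\norm{z_k}_q^q\le S_k\to0$. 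Apply the definition \eqref{eq:11} of $f$ to $z_k/\vartheta_k$ at $\lambda=\lambda_N\vartheta_k^{\frac{N}{N-1}}\le\lambda_0<\lambda_N$, and use $f(\lambda)\le C\lambda^m$ on $(0,\lambda_0]$ together with \eqref{eq:4}. This gives
\[
 \int_{\{v_k\le d_k/L\}}\frac{\Ph(\lambda_\beta v_k^{\frac{N}{N-1}})}{F^{o}(x)^\beta}\dd x
 \le\J(z_k)\le C_L\,\vartheta_k^{\frac{mN}{N-1}-q\delta}S_k^\delta\le C_LS_k^\delta\longrightarrow0.
\]
Do not bound $\J(z_k)$ through \eqref{eq:44} by $CS_k^\delta\bigl(1+(d_k/L)^{q\delta/(N-1)}\bigr)$ instead. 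That bound is $O(1)$ only if $T_k$ is bounded, which is Lemma~\ref{lem:6.1}, and that lemma is proved using the present one.

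Once the sublevel part is known to be $o(1)$, the lower bound is immediate. It needs no information about how $A_k$ is distributed relative to the scale $\ell_k$. Since $A_k\to d(N)>0$, the set $\{v_k>d_k/L\}$ carries $A_k(1+o(1))$. On that set,
\[
 \lambda_\beta v_k^{\frac{N}{N-1}}\Ph'(\lambda_\beta v_k^{\frac{N}{N-1}})\ge\lambda_\beta(d_k/L)^{\frac{N}{N-1}}\Ph(\lambda_\beta v_k^{\frac{N}{N-1}}),
\]
because $\Ph'(s)=\Ph(s)+s^{m-1}/(m-1)!\ge\Ph(s)$. Hence $\liminf_kJ_k/(\lambda_\beta d_k^{\frac{N}{N-1}}A_k)\ge L^{-\frac{N}{N-1}}$, and letting $L\downarrow1$ gives \eqref{eq:56}. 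This is the paper's argument.

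Your ``main obstacle,'' $A_k\le\frac{N-1}{N\lambda_\beta}\Lambda_kd_k^{-\frac{N}{N-1}}(1+o(1))$, is just a restatement of the missing half of \eqref{eq:57}. The way through it is the level splitting with $L\downarrow1$ combined with the vanishing $L^q$ norm $S_k\to0$. A finer inner-scale analysis or a rearrangement of \eqref{eq:40} will not supply it.
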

\begin{proof}
Fix $L>1$ and decompose $\R^N$ into
\[
 \Omega_{k,L}^+:=\{v_k>d_k/L\},\qquad\Omega_{k,L}^-:=\{v_k\le d_k/L\}.
\]
By Lemma~\ref{lem:5.3}, the function $\min\{v_k,d_k/L\}$ has Dirichlet
energy at most $L^{-1}+o(1)$. The weighted exact-growth estimate therefore implies
\[
 \int_{\Omega_{k,L}^-}\frac{\Ph(\lambda_\beta v_k^{\frac{N}{N-1}})}{F^{o}(x)^\beta}\dd x=o(1).
\]
Since $A_k\to d(N)>0$, the integral over $\Omega_{k,L}^+$ equals $A_k+o(1)$.
On this set, $\lambda_\beta v_k^{\frac{N}{N-1}}\to\infty$ uniformly as $k\to\infty$, and
\[
 \frac{\Ph'(s)}{\Ph(s)}\longrightarrow1\qquad(s\to\infty).
\]
Consequently, for fixed $L$,
\[
 \liminf_{k\to\infty}\frac{J_k}{\lambda_\beta d_k^{\frac{N}{N-1}} A_k}\ge L^{-\frac{N}{N-1}}.
\]
On the other hand, \eqref{eq:42} gives
\[
 J_k\le(\lambda_\beta d_k^{\frac{N}{N-1}}+m)A_k,
\]
so the corresponding limsup is at most one. Letting $L\downarrow1$
proves \eqref{eq:56}. Finally, divide \eqref{eq:40} by $A_kd_k^{\frac{N}{N-1}}$ and use
$E_k\to1$ and $d_k\to\infty$ to obtain \eqref{eq:57}.
\end{proof}

\subsection{The nonlinear Green function}\label{sec:6}
Define
\[
T_k:=S_k d_k^{q/(N-1)}.
\]
The lower bound in \eqref{eq:45} shows $T_k\ge c>0$.
\begin{lemma}\label{lem:6.1}
There exists $C>0$ such that
\[
 0<c\le T_k\le C.
\]
\end{lemma}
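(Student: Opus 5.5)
We argue by contradiction: suppose $T_k\to\infty$ along a subsequence. Since the lower bound $T_k\ge c$ is already given by \eqref{eq:45}, only the upper bound needs proof. The tool is the rescaled \emph{Green-type} function
\[
 G_k:=d_k^{1/(N-1)}v_k .
\]
Because $Q_N$ is $(N-1)$-homogeneous, multiplying \eqref{eq:38} by $d_k/\Lambda_k$ gives
\[
 -Q_NG_k+\eta_kG_k^{q-1}=\mu_k,\qquad
 \eta_k:=\frac{q\delta A_k d_k^{N/(N-1)}}{\Lambda_k}\,T_k^{-1},
\]
with source
\[
 \mu_k:=\frac{N\lambda_\beta d_k}{(N-1)\Lambda_k}\Ph'(\lambda_\beta v_k^{\frac N{N-1}})v_k^{\frac1{N-1}}F^{o}(x)^{-\beta}.
\]
By \eqref{eq:57}, $\eta_k\simeq T_k^{-1}$. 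I will first check that $\mu_k\rightharpoonup\delta_0$ with total mass $\to1$. On $\{v_k>d_k/L\}$ we have $v_k^{1/(N-1)}\simeq d_k^{1/(N-1)}$ and $\Ph'\simeq\Ph$ there, so $\int\mu_k\to1$ by Lemma~\ref{lem:5.4}, \eqref{eq:56} and Lemma~\ref{lem:5.3} after letting $L\downarrow1$. The contribution of $\{v_k\le d_k/L\}$ vanishes by the same truncation estimate used in Lemma~\ref{lem:5.4}, and the inner scale $\ell_k\to0$ gives concentration at $0$.

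Next I integrate the radial equation using the zero flux at $0$ and at $\infty$; the latter holds since $v_k\in\D$ is radially nonincreasing. This gives the flux identity
\[
 K_Nr^{N-1}(-G_k'(r))^{N-1}=\mu_k(W_r)-\eta_k\int_{W_r}G_k^{q-1}\dd x
 =\eta_k\int_{\R^N\setminus W_r}G_k^{q-1}\dd x+o(1)\quad(r\gg\ell_k),
\]
together with the global balance $\eta_k\int_{\R^N}G_k^{q-1}=\int\mu_k\to1$. I also have the exact $L^q$ identity
\[
 \int_{\R^N}G_k^q\dd x=d_k^{q/(N-1)}S_k=T_k,
\]
and the monotonicity bound $G_k(r)\le (T_k/(\kappa_Nr^N))^{1/q}$.

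Now define $r_k$ by $\eta_k\int_{\R^N\setminus W_{r_k}}G_k^{q-1}=\tfrac12$. For $r\le r_k$ (beyond the inner scale) the flux is at least $\tfrac12-o(1)$, so $-G_k'(r)\ge c/r$. For $r\ge r_k$ the flux is at most $\tfrac12$. Balance then forces $\eta_k\int_{W_{r_k}}G_k^{q-1}\simeq\tfrac12$ and $\eta_k\int_{W_{r_k}^c}G_k^{q-1}=\tfrac12$. Using $G_k(r)\ge G_k(r_k)+c\log(r_k/r)$ on $W_{r_k}$, this yields $\eta_kr_k^N\,(1+G_k(r_k))^{q-1}\lesssim1$. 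On the other hand, since $G_k$ is radially decreasing, $\int_{W_{r_k}^c}G_k^{q}\le G_k(r_k)\int_{W_{r_k}^c}G_k^{q-1}\simeq G_k(r_k)/\eta_k$. Likewise $\int_{W_{r_k}}G_k^q\lesssim r_k^N(1+G_k(r_k))^q$, and this is $\lesssim(1+G_k(r_k))/\eta_k$ by the previous bound. Hence
\[
 T_k=\int G_k^q\lesssim \frac{1+G_k(r_k)}{\eta_k}\simeq T_k\,(1+G_k(r_k)).
\]
This is not yet contradictory, so the decisive additional input is a \emph{lower} bound of the same kind: $\eta_k\int G_k^{q-1}\to1$ and H\"older on the mass region should give $T_k\gtrsim \eta_k^{-1}(1+G_k(r_k))$. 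I then plan to show $G_k(r_k)$ stays bounded, by comparing with the explicit radial supersolution
\[
 c\log_+(\bar r/r)+C\Bigl(\frac{r}{\bar r}\Bigr)^{-N/(q-N+1)\,\cdot}
\]
(the power-type decay profile of $-Q_NG+\eta G^{q-1}=0$, available since $q>N-1$ is forced by $m\le N-1$). The pure power here is heuristic; the point is that the absorption balances at the natural radius $\bar r\simeq\eta_k^{-1/N}$. Combining the two-sided estimate with the identity $\int G_k^q=T_k$ and $\eta_k\simeq T_k^{-1}$ then contradicts $T_k\to\infty$.

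The main obstacle is this last step: controlling $G_k(r_k)$, the value at the absorption radius, uniformly in $k$. Boundedness of the mass $\eta_k\int G_k^{q-1}$ alone leaves a logarithmic slack. My first attempt is the comparison-principle argument for the radial ODE on $\R^N\setminus W_{r_k}$, with the flux $\tfrac12$ prescribed at $r_k$, which should pin $G_k(r_k)$ in terms of $\eta_kr_k^N$. If that fails, the fallback is to bound $T_k$ directly through Lemma~\ref{lem:4.1}. For that I would build a competitor in $\mathcal A_k$ that transfers part of the $L^q$ mass from the outer region to the inner one, and show that $T_k\to\infty$ would strictly increase $\J$.
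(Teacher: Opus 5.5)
\textbf{There is a genuine gap, and it is structural: the ingredients you use cannot produce a contradiction.}

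Everything you invoke is invariant under the dilation $G\mapsto G(\tau\,\cdot)$, $\eta\mapsto\tau^N\eta$, $T\mapsto\tau^{-N}T$ of Lemma~\ref{lem:7.2}. This covers the equation $-Q_NG_k+\eta_kG_k^{q-1}=\mu_k$ with $\mu_k\rightharpoonup\delta_0$, the flux identity, radial monotonicity, $\int_{\R^N}G_k^q\dd x=T_k$, and $\eta_kT_k\to q(N-1)/(N\lambda_N)$. The last relation is exactly the Pohozaev identity \eqref{eq:74}, which holds at every point of the dilation orbit.

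Consequently a configuration $G_k\approx G_1(\eta_k^{1/N}\,\cdot)$ with $\eta_k\to0$ and $T_k\to\infty$ satisfies all of your relations. In it, $r_k=\eta_k^{-1/N}r_1$ and $G_k(r_k)=G_1(r_1)$ stays bounded. So even if you settle what you call the main obstacle, your two-sided estimate only returns $T_k\simeq T_k$.

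The supersolution step also has errors. By \eqref{eq:7}, $m\le N-1$ is equivalent to $q<q_+$, which is an upper bound and does not force $q>N-1$. Moreover, a power ansatz for $-Q_NG+\eta G^{q-1}=0$ balances only when $q>N$; for $q<N$, decaying solutions have compact support.

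\textbf{What is missing is an input that breaks this invariance by tying $d_k$ to $\lambda_N-\lambda_k$.} Note that $T_k^{N/q}=(1-\theta_k^N)d_k^{N/(N-1)}$ with $1-\theta_k^N\simeq\lambda_N-\lambda_k$. Also, $S_k$ is imposed by the lifting \eqref{eq:33}; it is not selected by the maximization in Lemma~\ref{lem:4.1}, whose quotient is dilation invariant. So the lemma is really the statement $d_k^{N/(N-1)}\lesssim(\lambda_N-\lambda_k)^{-1}$.

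The paper obtains this bound from the $\lambda$-dependence of $f$:
\begin{enumerate}
\item Set $g_u(\lambda)=\norm{u}_q^{-q\delta}\int_{\R^N}\Ph(\lambda\delta|u|^{\frac{N}{N-1}})F^{o}(x)^{-\beta}\dd x$. Each $g_u$ is convex in $\lambda$, being a power series with nonnegative coefficients. Hence $f=\sup_u g_u$ is convex.
\item Since $g_{u_k}$ touches $f$ from below at $\lambda_k$, one gets $g_{u_k}'(\lambda_k)\le(f(s)-f(\lambda_k))/(s-\lambda_k)$ for $s>\lambda_k$.
\item Taking $s=(\lambda_k+\lambda_N)/2$ and using \eqref{eq:14} gives $g_{u_k}'(\lambda_k)/f(\lambda_k)\lesssim(\lambda_N-\lambda_k)^{-1}$.
\item The exact lifting identity $J_k/A_k=\lambda_k\,g_{u_k}'(\lambda_k)/f(\lambda_k)$, together with Lemma~\ref{lem:5.4}, turns this into the required bound on $d_k^{N/(N-1)}$.
\end{enumerate}

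Your fallback, a competitor in $\mathcal{A}_k$ at fixed $\lambda_k$, does not see this dependence as described. It would need a quantitative upper bound for $\J$ in terms of height and energy deficit, of Carleson--Chang type, valid uniformly as $\eta_k\to0$. Along the dilation orbit the concentration level \eqref{eq:88} does tend to zero as $S\to\infty$, which is the kind of non-invariant information required. Nothing of that sort is supplied in your proposal.
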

\begin{proof}
For a fixed admissible function $u$, set
\[
 g_u(\lambda):=\frac1{\norm{u}_q^{q\delta}}
 \int_{\R^N}\frac{\Ph(\lambda\delta|u|^{\frac{N}{N-1}})}{F^{o}(x)^\beta}\dd x.
\]
The function $g_u$ is convex on $(0,\lambda_N)$, and therefore so is
$f=\sup_u g_u$. For the maximizer $u_k$, we have $g_{u_k}(\lambda_k)=f(\lambda_k)$ and
\[
 g_{u_k}'(\lambda_k)=\delta\int_{\R^N}u_k^{\frac{N}{N-1}}\Ph'(\lambda_k\delta u_k^{\frac{N}{N-1}})
 F^{o}(x)^{-\beta}\dd x.
\]
Convexity of $g_{u_k}$ and the inequality $g_{u_k}\le f$ imply, for $s>\lambda_k$,
\[
 g_{u_k}'(\lambda_k)\le\frac{f(s)-f(\lambda_k)}{s-\lambda_k}.
\]
Choose $s=(\lambda_k+\lambda_N)/2$. By \eqref{eq:14}, the ratio $f(s)/f(\lambda_k)$
remains bounded as $k\to\infty$, while $s-\lambda_k=(\lambda_N-\lambda_k)/2$. Hence
\[
 \frac{g_{u_k}'(\lambda_k)}{f(\lambda_k)}\le\frac{C}{\lambda_N-\lambda_k}.
\]
The lifting \eqref{eq:33}--\eqref{eq:34} gives the exact identity
\[
 \frac{J_k}{A_k}=\lambda_k\frac{g_{u_k}'(\lambda_k)}{f(\lambda_k)}.
\]
Using Lemma~\ref{lem:5.4} and $\lambda_k\to\lambda_N$, we obtain
\[
 d_k^{\frac{N}{N-1}}\le\frac{C}{\lambda_N-\lambda_k}.
\]
Finally,
\[
 T_k^{N/q}=S_k^{N/q}d_k^{\frac{N}{N-1}}=(1-\theta_k^N)d_k^{\frac{N}{N-1}},
\]
and $1-\theta_k^N\simeq\lambda_N-\lambda_k$. This proves the upper bound for
$T_k$; the lower bound was obtained in \eqref{eq:45}.
\end{proof}
Passing to a subsequence,
\[
T_k\to T\in(0,\infty).
\]
Set
\[
G_k:=d_k^{1/(N-1)}v_k.
\]
Then
\[
 \norm{G_k}_q^q=T_k.
\]
Multiplying \eqref{eq:38} by $d_k/\Lambda_k$ gives
\[
-Q_NG_k+\eta_k G_k^{q-1}=\nu_k,
\]
where
\[
\eta_k:=\frac{q\delta A_k}{S_k\Lambda_k}d_k^{\frac{N-q}{N-1}}
\]
and
\begin{equation}\label{eq:63}
 \nu_k:=\frac{N\lambda_\beta}{(N-1)\Lambda_k}
 d_k\Ph'(\lambda_\beta v_k^{\frac{N}{N-1}})v_k^{\frac{1}{N-1}}F^{o}(x)^{-\beta}\dd x.
\end{equation}
By \eqref{eq:57},
\[
\eta_k T_k\longrightarrow\frac{q(N-1)}{N\lambda_N}.
\]
Hence
\begin{equation}\label{eq:65}
 \eta_k\to\eta:=\frac{q(N-1)}{N\lambda_N T}>0.
\end{equation}
\begin{lemma}[Concentration of the source]\label{lem:6.2}
The measures $\nu_k$ in \eqref{eq:63} satisfy
\[
 \nu_k\rightharpoonup\delta_0\qquad\text{locally in }\mathcal{M}(\R^N).
\]
\end{lemma}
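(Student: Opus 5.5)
To prove Lemma~\ref{lem:6.2} I would test $\nu_k$ against a fixed $\varphi\in C_c(\R^N)$ and split $\R^N$ into the level sets $\Omega_{k,L}^+=\{v_k>d_k/L\}$ and $\Omega_{k,L}^-=\{v_k\le d_k/L\}$ from Lemma~\ref{lem:5.4}. The goal is to show that the total mass of $\nu_k$ on $\Omega_{k,L}^+$ tends to $1$ up to an error vanishing as $L\downarrow1$, that $\Omega_{k,L}^+$ shrinks to the origin, and that the mass on $\Omega_{k,L}^-\cap\operatorname{supp}\varphi$ tends to zero.

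\textbf{Step 1: the high-level part.} On $\Omega_{k,L}^+$ we have $v_k\in(d_k/L,d_k]$, so $d_kv_k^{1/(N-1)}$ lies between $L^{-1/(N-1)}d_k^{N/(N-1)}$ and $d_k^{N/(N-1)}$. Using $\Ph'(s)/\Ph(s)\to1$ as $s\to\infty$, the density of $\nu_k$ on $\Omega_{k,L}^+$ is comparable to
\[
\frac{N\lambda_\beta d_k^{\frac{N}{N-1}}}{(N-1)\Lambda_k}\,\Ph(\lambda_\beta v_k^{\frac{N}{N-1}})F^{o}(x)^{-\beta},
\]
with ratio factors in $[L^{-1/(N-1)}(1+o(1)),1+o(1)]$. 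By the proof of Lemma~\ref{lem:5.4}, the $\Ph$-integral over $\Omega_{k,L}^+$ equals $A_k+o(1)$. By \eqref{eq:57}, the prefactor times $A_k$ tends to $1$. Hence
\[
L^{-1/(N-1)}\le\liminf_k\nu_k(\Omega_{k,L}^+)\le\limsup_k\nu_k(\Omega_{k,L}^+)\le1.
\]
By \eqref{eq:54}, $\Omega_{k,L}^+=W_{R_{k,L}}$ with $R_{k,L}\to0$. Therefore $\int_{\Omega_{k,L}^+}\varphi\,\dd\nu_k-\varphi(0)\nu_k(\Omega_{k,L}^+)\to0$ by continuity of $\varphi$.

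\textbf{Step 2: the low-level part, which I expect to be the main obstacle.} Here the density carries the factor $d_k/\Lambda_k\simeq d_k^{-1/(N-1)}$, and I would show that this decay beats the $\Ph'$ integral. On $\Omega_{k,L}^-$ use $\Ph'(s)\le\Ph(s)+s^{m-1}/(m-1)!$ together with $v_k\le d_k/L$. This bounds the density by
\[
C d_k^{-\frac1{N-1}}\Bigl(\Ph(\lambda_\beta v_k^{\frac N{N-1}})+v_k^{\frac{N(m-1)}{N-1}}\Bigr)v_k^{\frac1{N-1}}F^{o}(x)^{-\beta}.
\]
For the $\Ph$ term, apply H\"older with an exponent $s>1$ close to $1$. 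By Lemma~\ref{lem:5.3}, $\min\{v_k,d_k/L\}$ has energy at most $L^{-1}+o(1)<1$, so the singular Moser--Trudinger inequality on a fixed Wulff ball containing $\operatorname{supp}\varphi$ (handled with the shift by $V_k(R)\to0$ as in Lemma~\ref{lem:4.4}) bounds $\int\Ph(\cdot)^sF^{o}(x)^{-\beta}$ uniformly. Combined with local $L^p$ bounds on $v_k$, this shows the term is $O(d_k^{-1/(N-1)})\to0$. The polynomial term $d_k^{-1/(N-1)}v_k^{(N(m-1)+1)/(N-1)}$ on a compact set is also bounded by local $L^p$ bounds, which follow from $E_k\le1$, $S_k\le1$ and Poincar\'e's inequality. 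It therefore tends to zero as well.

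\textbf{Step 3: conclusion.} This step also yields the stated local weak convergence. Combining the two parts gives
\[
L^{-1/(N-1)}\varphi(0)-o(1)\le\int\varphi\,\dd\nu_k\le\varphi(0)+o(1)
\]
for $\varphi\ge0$. Letting $k\to\infty$ and then $L\downarrow1$ gives $\int\varphi\,\dd\nu_k\to\varphi(0)$. General $\varphi$ follows by splitting into positive and negative parts.
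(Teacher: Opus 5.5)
Your proof is correct and follows essentially the same route as the paper. Both arguments split at the level $d_k/L$, compare $\nu_k$ on $\{v_k>d_k/L\}$ with a measure of asymptotically unit mass concentrating at the origin, kill the sublevel part on a fixed Wulff ball via $d_k/\Lambda_k=O(d_k^{-1/(N-1)})$ and the bounded-domain singular subcritical inequality that Lemma~\ref{lem:5.3} makes available, and then let $L\downarrow1$.

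The only difference is the comparison measure. The paper uses $\mu_k$ (density proportional to $v_k^{\frac{N}{N-1}}\Ph'$), with $\mu_k\le\nu_k\le L\mu_k$ on the superlevel set and localization from the inner profile of Proposition~\ref{prop:5.2}. You instead use $d_k^{\frac{N}{N-1}}\Lambda_k^{-1}\Ph\,F^{o}(x)^{-\beta}$, together with $\Ph'/\Ph\to1$, the splitting of $A_k$ from Lemma~\ref{lem:5.4}, \eqref{eq:57}, and the shrinking superlevel sets from \eqref{eq:54}. This changes nothing essential.
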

\begin{proof}
Put
\[
 \dd\mu_k=\frac{N\lambda_\beta}{(N-1)\Lambda_k}
 v_k^{\frac{N}{N-1}}\Ph'(\lambda_\beta v_k^{\frac{N}{N-1}})F^{o}(x)^{-\beta}\dd x.
\]
By Lemma~\ref{lem:5.4}, $\mu_k(\R^N)=pJ_k/\Lambda_k\to1$.
The inner change of variables and Proposition~\ref{prop:5.2} give
\[
 \lim_{k\to\infty}\mu_k(W_{R\ell_k})
 =\int_{W_R}F^{o}(y)^{-\beta}\e^{\frac{N\lambda_\beta}{N-1}\phi(y)}\dd y.
\]
Thus $\mu_k\rightharpoonup\delta_0$, and its mass outside the inner balls
tends to zero as first $k\to\infty$ and then $R\to\infty$.

Fix $L>1$ and a bounded Wulff ball $W_B$. On $\{v_k>d_k/L\}$,
$\dd\mu_k\le\dd\nu_k\le L\dd\mu_k$. On its complement set
$z_k=\min\{v_k,d_k/L\}$. Its gradient energy is at most $L^{-1}+o(1)$ by
Lemma~\ref{lem:5.3}. The bounded-domain singular subcritical inequality,
with a small exponential margin, implies
\[
 \sup_k\int_{W_B}\Ph'(\lambda_\beta z_k^{\frac{N}{N-1}})z_k^{\frac{1}{N-1}}F^{o}(x)^{-\beta}\dd x
 <\infty.
\]
Here the bounded additive mean is controlled by the local $L^q$ norm;
the polynomial factor is absorbed by the exponential margin. Since
$d_k/\Lambda_k=O(d_k^{-\frac{1}{N-1}})\to0$, the sublevel contribution to $\nu_k(W_B)$
tends to zero. The inner ball gives the lower bound one for the limiting
mass near zero; the superlevel comparison gives an upper bound $L$, and
no mass remains on a fixed annulus. Letting $L\downarrow1$ proves local
convergence to $\delta_0$.
\end{proof}
\begin{proposition}[Nonlinear Green-function limit]\label{prop:6.3}
Up to a subsequence,
\[
 G_k\rightharpoonup G\ \text{in }W_{\mathrm{loc}}^{1,r}(\R^N)\ (1<r<N),
 \qquad G_k\to G\ \text{in }C_{\mathrm{loc}}^1(\R^N\setminus\{0\}).
\]
The limit is nonnegative, Wulff radial, satisfies $\norm{G}_q^q\le T$, and solves
\begin{equation}\label{eq:66}
 -Q_NG+\eta G^{q-1}=\delta_0\qquad\text{in }\R^N.
\end{equation}
Writing $G(x)=g(r)$, $r=F^{o}(x)$, its radial flux is
\begin{equation}\label{eq:70}
 P(r):=K_Nr^{N-1}(-g'(r))^{N-1}
 =1-\eta\int_{W_r}G^{q-1}\dd x\ge0.
\end{equation}
As $r\downarrow0$,
\begin{equation}\label{eq:71}
 1-P(r)=O\bigl(r^N|\log r|^{q-1}\bigr),
\end{equation}
and, for some finite $C_\eta$,
\begin{equation}\label{eq:73}
 G(x)=-K_N^{-1/(N-1)}\log F^{o}(x)+C_\eta
 +O\bigl(F^{o}(x)^N|\log F^{o}(x)|^{q-1}\bigr).
\end{equation}
At infinity, $g(r)\to0$, $P(r)\to0$, and $r^Ng(r)^q\to0$.
\end{proposition}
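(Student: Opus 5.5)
Everything reduces to the radial integrated equation. For $G_k(x)=g_k(r)$ with $r=F^{o}(x)$, integrating the weak equation $-Q_NG_k+\eta_kG_k^{q-1}=\nu_k$ over $W_r$ and using the zero flux at the origin (Lemma~\ref{lem:4.2}) gives
\begin{equation*}
 K_Nr^{N-1}(-g_k'(r))^{N-1}=\nu_k(W_r)-\eta_k\int_{W_r}G_k^{q-1}\dd x .
\end{equation*}

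\textbf{Step 1: uniform bounds.} The right-hand side is at most $\nu_k(W_r)$. Since $\nu_k(\R^N)$ is bounded by $pJ_k/\Lambda_k\cdot O(1)$, the flux is uniformly bounded, so $|g_k'(r)|\le Cr^{-1}$ on bounded sets. Radial monotonicity gives $\kappa_N R^Ng_k(R)^q\le T_k\le C$, hence $g_k(R)\le CR^{-N/q}$. Integrating $g_k'$ between $r$ and $R$ then yields $g_k(r)\le C(R)(1+|\log r|)$. This gives uniform $W^{1,r}_{\mathrm{loc}}$ bounds for $r<N$, since $|\nabla G_k|\lesssim F^{o}(x)^{-1}$, together with uniform $L^\infty$ and Lipschitz bounds on annuli.

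\textbf{Step 2: passing to the limit.} By Arzel\`a--Ascoli, $g_k\to g$ locally uniformly on $(0,\infty)$. By Lemma~\ref{lem:6.2}, $\nu_k(W_r)\to1$ for each fixed $r>0$. The absorption integral converges by dominated convergence, using the logarithmic bound near $0$ and $\eta_k\to\eta$ from \eqref{eq:65}. Hence the fluxes converge locally uniformly away from $0$, and taking $(N-1)$st roots gives $C^1_{\mathrm{loc}}$ convergence. The limit flux identity is exactly \eqref{eq:70}. The flux satisfies $P\ge0$ because it is a limit of nonnegative quantities, $g_k$ being nonincreasing. Weak convergence in $W^{1,r}_{\mathrm{loc}}$ and the weak form of \eqref{eq:66} follow by testing with $\varphi\in C_c^\infty$ and using the local convergences. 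Fatou gives $\norm{G}_q^q\le T$.

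\textbf{Step 3: asymptotics at the origin.} From $g(r)=O(|\log r|)$ we get $\int_{W_r}G^{q-1}\dd x=O(r^N|\log r|^{q-1})$, which is \eqref{eq:71}. Then
\begin{equation*}
 -g'(r)=K_N^{-1/(N-1)}r^{-1}P(r)^{1/(N-1)}=K_N^{-1/(N-1)}r^{-1}+O\bigl(r^{N-1}|\log r|^{q-1}\bigr).
\end{equation*}
The error term is integrable at $0$. Integrating therefore produces the finite constant $C_\eta$ and the expansion \eqref{eq:73}.

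\textbf{Step 4: behavior at infinity.} The bound $r^Ng(r)^q\to0$ follows from monotonicity and $G\in L^q$: the quantity $r^Ng(r)^q$ is controlled by $C\int_{W_r\setminus W_{r/2}}G^q\dd x\to0$. Consequently $g\to0$. Since $P$ is nonincreasing and nonnegative, it has a limit $P_\infty\ge0$. If $P_\infty>0$, then $-g'\ge cr^{-1}$ for large $r$. Because $g(\infty)=0$, this would give $g(r)\ge\int_r^\infty cs^{-1}\dd s=\infty$, a contradiction. Hence $P(r)\to0$.

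\textbf{Main obstacle.} I expect the hardest part to be the uniform logarithmic upper bound near the origin in Step 1. It depends on controlling $\nu_k(W_r)$ uniformly and on the zero-flux property for each $k$. Once that bound is in place, the limit passages and the asymptotics are routine one-dimensional estimates.
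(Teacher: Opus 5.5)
Your argument follows the paper's proof step for step: the integrated radial flux identity, the local bounds $|g_k'|\le C_R/r$ and $g_k\le C_R(1+|\log(r/R)|)$ obtained from radial monotonicity in $L^q$, the limit passage on annuli, the substitution of the logarithmic bound to get \eqref{eq:71}--\eqref{eq:73}, and the contradiction argument for $P(r)\to0$ at infinity.

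One justification in Step~1 needs repair: the global mass $\nu_k(\R^N)$ is not controlled by $pJ_k/\Lambda_k$. The comparison $\nu_k\le L\mu_k$, with $\mu_k$ as in the proof of Lemma~\ref{lem:6.2}, holds only on $\{v_k>d_k/L\}$, because $\dd\nu_k/\dd\mu_k=d_k/v_k$ is unbounded where $v_k$ is small, and nothing available controls the sublevel part of $\nu_k$ globally. Use instead, as the paper does, that Lemma~\ref{lem:6.2} gives $\sup_k\nu_k(W_R)<\infty$ for each fixed $R$; this suffices, since you only use the flux bound on bounded sets. Also note that monotonicity of $r\mapsto\nu_k(W_r)$ upgrades the pointwise limit $\nu_k(W_r)\to1$ to uniform convergence on $[r_0,R]$, which your $C^1_{\mathrm{loc}}$ claim requires.
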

\begin{proof}
Write $G_k(x)=g_k(r)$. The radial equation gives
\begin{equation}\label{eq:68}
 P_k(r):=K_Nr^{N-1}(-g_k'(r))^{N-1}
 =\nu_k(W_r)-\eta_k\int_{W_r}G_k^{q-1}\dd x\ge0.
\end{equation}
For fixed $R$, Lemma~\ref{lem:6.2} bounds $\nu_k(W_R)$ uniformly.
Radial monotonicity and $\norm{G_k}_q^q=T_k\le C$ therefore give
\[
 |g_k'(r)|\le C_R/r,\qquad
 0\le g_k(r)\le C_R(1+|\log(r/R)|),\qquad 0<r\le R.
\]
These estimates yield local $W^{1,r}$ bounds for $1<r<N$, subsequential
uniform convergence on annuli, and convergence in every finite local
Lebesgue space. In particular, $G_k^{q-1}\to G^{q-1}$ in $L^1(W_R)$,
also when $q<2$.

On every $[r_0,R]\subset(0,\infty)$, the monotone distribution functions
$\nu_k(W_r)$ converge uniformly to one by Lemma~\ref{lem:6.2}.
Thus \eqref{eq:68} converges uniformly to \eqref{eq:70}.
Taking the $(N-1)$st root gives uniform derivative convergence on annuli.
The logarithmic bound gives $P(r)\to1$ at zero, which identifies the
source in \eqref{eq:66}, and Fatou's lemma gives $\norm{G}_q^q\le T$.

Substitution of the logarithmic bound into \eqref{eq:70} proves
\eqref{eq:71}; integration of
$-g'(r)=K_N^{-1/(N-1)}P(r)^{1/(N-1)}/r$ gives \eqref{eq:73}.
At infinity, monotonicity and $G\in L^q$ imply $g(r)\to0$ and
$r^Ng(r)^q\to0$. The nonnegative, nonincreasing flux must also tend to
zero: a positive limit would force $-g'(r)\ge c/r$, contradicting $g\ge0$.
\end{proof}
For large $t$, let $r(t)$ be determined by $g(r(t))=t$ and set
\begin{equation}\label{eq:72}
 \mathcal{P}(t):=P(r(t)).
\end{equation}

\begin{lemma}[Pohozaev identity and recovery of mass]\label{lem:6.4}
The nonlinear Green function satisfies
\begin{equation}\label{eq:74}
 \eta\norm{G}_q^q=\frac{q(N-1)}{N\lambda_N}.
\end{equation}
Moreover, $G_k\to G$ strongly in $L^q(\R^N)$ and
\begin{equation}\label{eq:67}
 \norm{G}_q^q=T.
\end{equation}
\end{lemma}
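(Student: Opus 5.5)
The plan is to derive \eqref{eq:74} by a radial Pohozaev computation for \eqref{eq:66}, using only the flux identity \eqref{eq:70} and the asymptotics from Proposition~\ref{prop:6.3}. Then \eqref{eq:67} will follow by comparison with \eqref{eq:65}, and strong convergence from convergence of norms.

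\emph{Step 1: a radial identity on annuli.} Write $w(r):=-g'(r)>0$, so that $P(r)=K_Nr^{N-1}w^{N-1}$. Differentiating \eqref{eq:70} gives $P'(r)=-\eta K_N r^{N-1}g^{q-1}$. On the other hand, differentiating the product directly,
\[
P'=K_N(N-1)\bigl(r^{N-2}w^{N-1}+r^{N-1}w^{N-2}w'\bigr).
\]
I multiply both expressions for $P'$ by $rw/(N-1)$. The left side becomes $\frac{K_N}{N}\frac{\dd}{\dd r}(r^Nw^N)$. The right side becomes $\frac{\eta K_N}{q(N-1)}r^N(g^q)'$. Integrating over $[\varepsilon,R]$ and integrating by parts on the right gives
\[
\frac{K_N}{N}\Bigl[r^Nw^N\Bigr]_\varepsilon^R
=\frac{\eta K_N}{q(N-1)}\Bigl[r^Ng^q\Bigr]_\varepsilon^R
-\frac{\eta N}{q(N-1)}\int_{W_R\setminus W_\varepsilon}G^q\dd x .
\]
Here I used $\int_{W_R\setminus W_\varepsilon}G^q\dd x=K_N\int_\varepsilon^R r^{N-1}g^q\dd r$.

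\emph{Step 2: boundary terms.} This is the step I expect to need the most care, since everything hinges on the two endpoint limits.
\begin{enumerate}
\item At $r=\varepsilon\downarrow0$, the flux \eqref{eq:70} together with \eqref{eq:71} gives $P(\varepsilon)\to1$. Hence $\varepsilon w(\varepsilon)\to K_N^{-1/(N-1)}$, and so
\[
\frac{K_N}{N}\,\varepsilon^Nw(\varepsilon)^N\longrightarrow \frac{K_N^{-1/(N-1)}}{N}=\frac1{\lambda_N},
\]
because $\lambda_N=NK_N^{1/(N-1)}$.
\item Also at $r=\varepsilon\downarrow0$, the logarithmic expansion \eqref{eq:73} gives $\varepsilon^Ng(\varepsilon)^q\to0$.
\item At $R\to\infty$, Proposition~\ref{prop:6.3} gives $P(R)\to0$, hence $Rw(R)\to0$. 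It also gives $R^Ng(R)^q\to0$.
\end{enumerate}
Letting $\varepsilon\to0$ and $R\to\infty$, the identity of Step 1 reduces to $-1/\lambda_N=-\frac{\eta N}{q(N-1)}\norm{G}_q^q$. This is \eqref{eq:74}. Finiteness of $\norm{G}_q$ is already known from $\norm{G}_q^q\le T$.

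\emph{Step 3: recovery of mass.} By \eqref{eq:65}, $\eta T=q(N-1)/(N\lambda_N)$. Comparing with \eqref{eq:74} yields $\norm{G}_q^q=T$, which is \eqref{eq:67}.

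\emph{Step 4: strong convergence.} Since $G_k\to G$ locally uniformly away from the origin, the convergence holds almost everywhere. Moreover $\norm{G_k}_q^q=T_k\to T=\norm{G}_q^q$. The Brezis--Lieb lemma (or uniform convexity of $L^q$, $q>1$, combined with weak convergence in $L^q$) then gives $G_k\to G$ strongly in $L^q(\R^N)$.
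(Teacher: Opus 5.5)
Your proposal is correct and follows the paper's proof. The paper does the same Pohozaev computation in the logarithmic variable $s=\log r$, $h(s)=g(\e^s)$, where $|h'|^N=r^Nw^N$ and $\e^{Ns}h^q=r^Ng^q$, so your boundary terms and their limits coincide with the paper's; the recovery of mass via \eqref{eq:65} and the strong convergence from convergence of norms are also identical. One small remark on the product rule for $w$ in Step~1: it remains valid even if $P$ vanishes at a finite radius, because $r^Nw^N=(P/K_N)^{N/(N-1)}$ is $C^1$ and the chain rule yields the same identity.
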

\begin{proof}
Set $r=\e^s$ and $h(s)=g(\e^s)$. Since
\[
 \int_{\R^N}G^q\dd x=K_N\int_{-\infty}^\infty\e^{Ns}h(s)^q\dd s,
\]
the radial equation associated with \eqref{eq:66} on $\R^N\setminus\{0\}$
can be written as
\[
 -\bigl(|h'|^{N-2}h'\bigr)'+\eta\e^{Ns}h^{q-1}=0.
\]
Multiplying by $h'$ and integrating from $-R$ to $R$ gives
\[
 -\frac{N-1}{N}\left[|h'(s)|^N\right]_{-R}^R+
 \frac{\eta}{q}\left[\e^{Ns}h(s)^q\right]_{-R}^R
 -\frac{N\eta}{q}\int_{-R}^R\e^{Ns}h(s)^q\dd s=0.
\]
The flux limits in Proposition~\ref{prop:6.3} give
\[
 h'(s)\longrightarrow-K_N^{-1/(N-1)}\qquad(s\to-\infty),
\]
whereas $h'(s)\to0$ as $s\to+\infty$. Moreover $\e^{Ns}h(s)^q\to0$ at both
ends; at $+\infty$ this follows from monotonicity and $G\in L^q$.
Letting $R\to\infty$ therefore yields
\[
 \frac{N-1}{N K_N^{N/(N-1)}}=\frac{N\eta}{q}
 \int_{-\infty}^\infty\e^{Ns}h(s)^q\dd s.
\]
Multiplying by $qK_N/N$ and using $\lambda_N=NK_N^{1/(N-1)}$
gives \eqref{eq:74}. Comparing with \eqref{eq:65} proves \eqref{eq:67}.
Boundedness and local convergence imply weak $L^q$ convergence; since
$\norm{G_k}_q^q=T_k\to T=\norm{G}_q^q$, uniform convexity gives strong convergence.
\end{proof}
Testing \eqref{eq:66} with $G$ on the finite annulus $W_R\setminus W_r$
gives the boundary terms $g(r)P(r)-g(R)P(R)$. Since $g(R)\to0$ and
$P(R)\to0$, the outer boundary term vanishes as $R\to\infty$.
Monotone convergence of the nonnegative energy terms therefore gives
\[
\int_{\R^N\setminus W_r}F(\nabla G)^N\dd x+
 \eta\int_{\R^N\setminus W_r}G^q\dd x=g(r)P(r).
\]
Using \eqref{eq:71} and \eqref{eq:74},
\begin{equation}\label{eq:76}
 \int_{\R^N\setminus W_r}F(\nabla G)^N\dd x
 =G(r)-\frac{q(N-1)}{N\lambda_N}+o(1)\qquad(r\downarrow0).
\end{equation}

\subsection{Concentration bound and Green-function normalization}\label{sec:7}
We estimate the endpoint level along the sequence \eqref{eq:32}. Let
\[
S:=T^{N/q}=\norm{G}_q^N
\]
and set
\[
c_q:=\frac{q(N-1)}{N\lambda_N}.
\]
For $a=N$, the contribution of the global constraint to the concentration
expansion is $S-c_q$.

Fix $\rho>0$ and write
\[
 b_{k,\rho}:=v_k|_{\partial W_\rho},\qquad
 \bar v_{k,\rho}:=(v_k-b_{k,\rho})_+,\qquad
 \tau_{k,\rho}:=\int_{W_\rho}F(\nabla\bar v_{k,\rho})^N\dd x.
\]
The outer convergence gives
\begin{equation}\label{eq:79}
 d_k^{1/(N-1)}b_{k,\rho}\to G(\rho).
\end{equation}
On each bounded annulus the gradients converge uniformly. Letting the
outer radius tend to infinity gives the required lower bound:
\begin{equation}\label{eq:80}
 \liminf_{k\to\infty}d_k^{\frac{N}{N-1}}\int_{\R^N\setminus W_\rho}F(\nabla v_k)^N\dd x
 \ge\int_{\R^N\setminus W_\rho}F(\nabla G)^N\dd x.
\end{equation}
Furthermore,
\[
d_k^{\frac{N}{N-1}}(1-E_k)\to S.
\]
Consequently,
\begin{equation}\label{eq:82}
\begin{aligned}
 \liminf_{k\to\infty}d_k^{\frac{N}{N-1}}(1-\tau_{k,\rho})
 &\ge S+\int_{\R^N\setminus W_\rho}F(\nabla G)^N\dd x\\
 &=G(\rho)+S-c_q+o_\rho(1).
\end{aligned}
\end{equation}
The lower bound in \eqref{eq:80} suffices for the following estimate.

The elementary inequality
\begin{equation}\label{eq:83}
 (z+b)^{\frac{N}{N-1}}-\tau^{-1/(N-1)}z^{\frac{N}{N-1}}\le\frac{b^{\frac{N}{N-1}}}{(1-\tau)^{1/(N-1)}},
 \qquad z,b\ge0,\quad0<\tau<1,
\end{equation}
combined with the anisotropic singular Carleson--Chang estimate of
\cite{ref10} yields the following bound.
\begin{proposition}[Concentration upper bound]\label{prop:7.1}
For the endpoint sequence \eqref{eq:32},
\[
d(N)\le\mathcal{C}_N(S),
\]
where
\begin{equation}\label{eq:85}
 \mathcal{C}_N(S):=\frac{K_N}{N-\beta}
 \exp\left[H_{N-1}+\lambda_\beta C_\eta
 -\frac{\lambda_\beta}{N-1}(S-c_q)\right].
\end{equation}
\end{proposition}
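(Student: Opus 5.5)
The strategy is to split $A_k=\J(v_k)$ into the contributions from $W_\rho$ and from $\R^N\setminus W_\rho$. On the inside we apply the anisotropic singular Carleson--Chang estimate of \cite{ref10} to the truncation $\bar v_{k,\rho}$, after normalizing its energy. The normalization error is then controlled by \eqref{eq:82}, and the limit is taken first as $k\to\infty$ and then as $\rho\downarrow0$.

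\emph{Outer region.} On $\R^N\setminus W_\rho$ we have $0\le v_k\le b_{k,\rho}=O(d_k^{-1/(N-1)})\to0$. Only the terms of degree $\ge m$ survive in $\Ph$, so the integrand is bounded by $Cv_k^{mN/(N-1)}F^{o}(x)^{-\beta}$. Lemma~\ref{lem:weighted}, together with $S_k\to0$ and $mN/(N-1)>q\delta$, makes this contribution $o(1)$. Hence
\[
 d(N)=\lim_k A_k=\lim_k\int_{W_\rho}\frac{\Ph(\lambda_\beta v_k^{\frac{N}{N-1}})}{F^{o}(x)^\beta}\dd x .
\]

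\emph{Inner region.} On $W_\rho$ write $v_k=\bar v_{k,\rho}+b_{k,\rho}$ and set $w_k=\bar v_{k,\rho}/\tau_{k,\rho}^{1/N}\in W_0^{1,N}(W_\rho)$, so that $\norm{F(\nabla w_k)}_N=1$. Apply \eqref{eq:83} with $z=\bar v_{k,\rho}$, $b=b_{k,\rho}$ and $\tau=\tau_{k,\rho}$. Since $z^{N/(N-1)}\tau^{-1/(N-1)}=w_k^{N/(N-1)}$, this gives
\[
 \lambda_\beta v_k^{\frac{N}{N-1}}\le\lambda_\beta w_k^{\frac{N}{N-1}}
 +\lambda_\beta\frac{b_{k,\rho}^{\frac{N}{N-1}}}{(1-\tau_{k,\rho})^{1/(N-1)}} .
\]
By \eqref{eq:79} and \eqref{eq:82},
\[
 \frac{b_{k,\rho}^{N/(N-1)}}{(1-\tau_{k,\rho})^{1/(N-1)}}
 =\frac{(d_k^{1/(N-1)}b_{k,\rho})^{N/(N-1)}}{\bigl(d_k^{N/(N-1)}(1-\tau_{k,\rho})\bigr)^{1/(N-1)}},
\]
and its limsup is at most
\[
 \frac{G(\rho)^{N/(N-1)}}{\bigl(G(\rho)+S-c_q+o_\rho(1)\bigr)^{1/(N-1)}} .
\]
Using the expansion \eqref{eq:73}, $G(\rho)\to\infty$, and the expansion $(1+x)^{-1/(N-1)}=1-x/(N-1)+O(x^2)$, this bound equals $G(\rho)-\frac{S-c_q}{N-1}+o_\rho(1)$.

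\emph{Carleson--Chang step.} Because $\Ph\le\exp$, the inner integral is at most $e^{\lambda_\beta(G(\rho)-\frac{S-c_q}{N-1}+o(1))}$ times the limsup of the concentrating parts of $\int_{W_\rho}e^{\lambda_\beta w_k^{N/(N-1)}}F^{o}(x)^{-\beta}$. The sequence $w_k$ concentrates at $0$, since its energy goes to the origin by Lemma~\ref{lem:4.4}. The singular Carleson--Chang bound of \cite{ref10} on $W_\rho$ states that such a limsup exceeds $\int_{W_\rho}F^{o}(x)^{-\beta}\dd x$ by at most
\[
 \frac{K_N}{N-\beta}\rho^{N-\beta}e^{H_{N-1}} .
\]
The term $\int_{W_\rho}F^{o}(x)^{-\beta}\dd x=O(\rho^{N-\beta})$ is harmless once multiplied by the small prefactor. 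This requires care: the constant part must be subtracted before multiplying, because on sets where $w_k$ is bounded, $v_k$ is tiny and the $\Ph$-integrand is $o(1)$ there.

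\emph{Assembling and the limit $\rho\downarrow0$.} Since $G(\rho)=-K_N^{-1/(N-1)}\log\rho+C_\eta+o(1)$ and $\lambda_\beta K_N^{-1/(N-1)}=(N-\beta)/(N-1)\cdot\ldots$, we must check the exponent. Because $\lambda_N=NK_N^{1/(N-1)}$, we have $\lambda_\beta K_N^{-1/(N-1)}=\delta N=N-\beta$. Therefore
\[
 e^{\lambda_\beta G(\rho)}\rho^{N-\beta}\to e^{\lambda_\beta C_\eta},
\]
which produces exactly $\mathcal C_N(S)$ as $\rho\downarrow0$.

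The main obstacle is the Carleson--Chang step. The difficulty is justifying that the bounded, nonconcentrating region contributes nothing after the exponential prefactor $e^{\lambda_\beta G(\rho)}\sim\rho^{-(N-\beta)}$ is applied. The plan is to split $W_\rho$ into $\{w_k\le M\}$ and its complement. On $\{w_k\le M\}$ we use $v_k\to0$ together with the Taylor order $m$. On the complement we use the concentration form of the estimate of \cite{ref10}.
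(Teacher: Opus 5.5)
Your proposal is correct and is essentially the paper's proof. You discard a non-concentrating region, apply \eqref{eq:83} with $z=\bar v_{k,\rho}$ and $\tau=\tau_{k,\rho}$, bound $\limsup_k B_{k,\rho}$ through \eqref{eq:79} and \eqref{eq:82}, use the Carleson--Chang estimate of \cite{ref10} with $\int_{W_\rho}F^{o}(x)^{-\beta}\dd x$ subtracted, and let $\rho\downarrow0$ via \eqref{eq:73}. The only difference is where you cut. You discard $\R^N\setminus W_\rho$ and $\{w_k\le M\}$ using the Taylor order $m$ and Lemma~\ref{lem:weighted}; this is legitimate because $v_k\le M+o(1)$ there, $S_k\to0$, and $\{w_k>M\}$ shrinks to the origin so its weighted measure vanishes. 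The paper instead discards $\{v_k\le d_k/L\}$ via the truncation estimate of Lemma~\ref{lem:5.3}, so your split is slightly more elementary.

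One correction: the prefactor $\e^{B_{k,\rho}}\approx\e^{\lambda_\beta G(\rho)}\sim\rho^{-(N-\beta)}$ is large, not small. The unsubtracted constant term would therefore contribute an $O(1)$ error, which is exactly why the subtraction you then insist on is needed. Also, on $\{w_k\le M\}$ it is the integral that tends to zero, not $v_k$ pointwise.
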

\begin{proof}
Fix $\rho>0$ small enough that
\[
 B_\rho:=S+\int_{\R^N\setminus W_\rho}F(\nabla G)^N\dd x>0.
\]
Set $z_{k,\rho}=\tau_{k,\rho}^{-1/N}\bar v_{k,\rho}$. For large $k$,
$0<\tau_{k,\rho}<1$, and $z_{k,\rho}$ is normalized concentrating in
$W_0^{1,N}(W_\rho)$. Fix $L>1$ and put $E_{k,L}=\{v_k>d_k/L\}$.
The proof of Lemma~\ref{lem:5.4} shows that the contribution of its
complement to $A_k$ tends to zero. Moreover $E_{k,L}\subset W_\rho$ for
large $k$, and its weighted measure tends to zero, by \eqref{eq:54}.

On $E_{k,L}$, \eqref{eq:83} gives
\[
 \e^{\lambda_\beta v_k^{\frac{N}{N-1}}}\le\e^{B_{k,\rho}}\e^{\lambda_\beta z_{k,\rho}^{\frac{N}{N-1}}},
 \qquad B_{k,\rho}:=\frac{\lambda_\beta b_{k,\rho}^{\frac{N}{N-1}}}
 {(1-\tau_{k,\rho})^{1/(N-1)}}.
\]
By \eqref{eq:79} and \eqref{eq:82},
\[
 \limsup_k B_{k,\rho}\le
 \lambda_\beta G(\rho)^{\frac{N}{N-1}}/B_\rho^{1/(N-1)}<\infty.
\]
Since $\Ph\le\exp$, subtracting one on the shrinking set $E_{k,L}$ gives
\[
 A_k\le\e^{B_{k,\rho}}\int_{W_\rho}
 (\e^{\lambda_\beta z_{k,\rho}^{\frac{N}{N-1}}}-1)F^{o}(x)^{-\beta}\dd x+o(1).
\]
The $o(1)$ term includes the weighted measure of $E_{k,L}$, which tends
to zero for fixed $\rho$. The anisotropic singular Carleson--Chang estimate
\cite[Lemma 3.12]{ref10} now yields
\[
 d(N)\le\frac{K_N}{N-\beta}\rho^{N-\beta}\e^{H_{N-1}}
 \exp\left(\frac{\lambda_\beta G(\rho)^{\frac{N}{N-1}}}{B_\rho^{1/(N-1)}}\right).
\]
By \eqref{eq:76}, $B_\rho=G(\rho)+S-c_q+o_\rho(1)$, so
\[
 \frac{G(\rho)^{\frac{N}{N-1}}}{B_\rho^{1/(N-1)}}
 =G(\rho)-\frac{S-c_q}{N-1}+o_\rho(1).
\]
Finally $\rho^{N-\beta}\e^{\lambda_\beta G(\rho)}\to\e^{\lambda_\beta C_\eta}$
by \eqref{eq:73}. Letting $\rho\downarrow0$ gives \eqref{eq:85}.
\end{proof}
Fix the Green function obtained from the endpoint sequence. We compute
the dependence of \eqref{eq:85} on $S$ within its dilation orbit; uniqueness
among all solutions of the Green equation is not assumed.
\begin{lemma}[Dilation law for the nonlinear Green family]\label{lem:7.2}
Let $G$ solve \eqref{eq:66}, and for $\tau>0$ set
\[
 G_\tau(x):=G(\tau x).
\]
Then
\[
 -Q_NG_\tau+\tau^N\eta G_\tau^{q-1}=\delta_0,
\]
\begin{equation}\label{eq:86}
 \norm{G_\tau}_q^N=\tau^{-N^2/q}S,
\end{equation}
and, if $C(S)$ denotes the regular constant in the pole expansion, then
\begin{equation}\label{eq:87}
 \lambda_\beta\bigl(C(S_2)-C(S_1)\bigr)=\sigma\log\frac{S_2}{S_1}.
\end{equation}
\end{lemma}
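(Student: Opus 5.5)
The plan is to verify the three assertions of Lemma~\ref{lem:7.2} by direct substitution, using only the homogeneity of the operator $Q_N$ and the pole expansion \eqref{eq:73}.

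\emph{The equation for $G_\tau$.} Since $F$ is one-homogeneous, $F_\xi$ is zero-homogeneous. Hence $\nabla G_\tau(x)=\tau(\nabla G)(\tau x)$, and
\[
 Q_NG_\tau(x)=\tau^N(Q_NG)(\tau x)
\]
in the distributional sense away from the origin. It is cleanest to work with the radial form. Writing $G_\tau(x)=g(\tau r)$, the radial flux of $G_\tau$ at radius $r$ equals the flux $P(\tau r)$ of $G$ at radius $\tau r$. Then \eqref{eq:70} gives
\[
 K_Nr^{N-1}\bigl(-\tfrac{d}{dr}g(\tau r)\bigr)^{N-1}
 =P(\tau r)=1-\eta\int_{W_{\tau r}}G^{q-1}\dd x
 =1-\tau^N\eta\int_{W_r}G_\tau^{q-1}\dd x,
\]
where the last step is the change of variables $x=\tau y$. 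This is exactly the integrated radial form of
\[
 -Q_NG_\tau+\tau^N\eta G_\tau^{q-1}=\delta_0 .
\]
The flux tends to one at the origin, so the unit point mass is preserved; this is the point where the dilation is compatible with the Dirac source, because the operator is $N$-homogeneous and $\delta_0$ scales by $\tau^{-N}$.

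\emph{The norm identity \eqref{eq:86}.} The same change of variables gives $\norm{G_\tau}_q^q=\tau^{-N}\norm{G}_q^q$. Raising to the power $N/q$ yields
\[
 \norm{G_\tau}_q^N=\tau^{-N^2/q}S .
\]

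\emph{The constant law \eqref{eq:87}.} From \eqref{eq:73},
\[
 G_\tau(x)=-K_N^{-1/(N-1)}\log F^{o}(x)-K_N^{-1/(N-1)}\log\tau+C_\eta+o(1),
\]
so the regular constant of $G_\tau$ is $C_\eta-K_N^{-1/(N-1)}\log\tau$. Now take two members $S_i=\tau_i^{-N^2/q}S$ of the dilation orbit. Then
\[
 \log\frac{\tau_2}{\tau_1}=-\frac{q}{N^2}\log\frac{S_2}{S_1}.
\]
Multiplying by $\lambda_\beta$ and using $\lambda_\beta K_N^{-1/(N-1)}=\delta\lambda_NK_N^{-1/(N-1)}=N\delta$ gives
\[
 \lambda_\beta\bigl(C(S_2)-C(S_1)\bigr)=N\delta\cdot\frac{q}{N^2}\log\frac{S_2}{S_1}=\sigma\log\frac{S_2}{S_1},
\]
by \eqref{eq:18}.

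\emph{Where care is needed.} There is no serious obstacle; the one step to handle carefully is making the identity $C(S)$ well defined. By this we mean the parametrization of the dilation orbit by $S$, which is injective since $\tau\mapsto\tau^{-N^2/q}$ is strictly monotone. We do not appeal to uniqueness of solutions of the Green equation, in line with the remark preceding the lemma.
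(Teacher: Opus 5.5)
Your proposal is correct and follows the same route as the paper. Both use the homogeneity of $Q_N$ (which you make explicit through the radial flux identity \eqref{eq:70} and the change of variables $x=\tau y$), the scaling $\norm{G_\tau}_q^q=\tau^{-N}\norm{G}_q^q$, and the shift of the pole constant $C_\eta\mapsto C_\eta-K_N^{-1/(N-1)}\log\tau$ combined with $\lambda_\beta K_N^{-1/(N-1)}=N-\beta$; you simply write out the details that the paper compresses into the word ``homogeneity''.
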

\begin{proof}
The equation and \eqref{eq:86} follow from homogeneity. The pole expansion gives
\[
 C_{\tau^N\eta}=C_\eta-K_N^{-1/(N-1)}\log\tau.
\]
Since $\lambda_\beta K_N^{-1/(N-1)}=N-\beta$ and
\[
 \log\frac{S_2}{S_1}=-\frac{N^2}{q}\log\tau,
\]
identity \eqref{eq:87} follows from \eqref{eq:18}.
\end{proof}
Consequently, there exists a constant $A_0>0$ such that
\begin{equation}\label{eq:88}
 \mathcal{C}_N(S)=A_0S^\sigma\exp\left(-\frac{\lambda_\beta}{N-1}S\right).
\end{equation}
The unique maximizer is
\begin{equation}\label{eq:89}
 S_*=\frac{(N-1)\sigma}{\lambda_\beta}=\frac{q(N-1)}{N\lambda_N}=c_q.
\end{equation}
Let $G_*$ denote the dilation of $G$ satisfying
\begin{equation}\label{eq:90}
 \norm{G_*}_q^N=c_q.
\end{equation}
Then Lemma~\ref{lem:6.4} implies
\begin{equation}\label{eq:91}
 -Q_NG_*+\norm{G_*}_q^{N-q}G_*^{q-1}=\delta_0.
\end{equation}
Write
\begin{equation}\label{eq:92}
 G_*(x)=-K_N^{-1/(N-1)}\log F^{o}(x)+A_*+o(1).
\end{equation}
For this normalization, define
\begin{equation}\label{eq:93}
 D_N:=\frac{K_N}{N-\beta}\exp\bigl(H_{N-1}+\lambda_\beta A_*\bigr).
\end{equation}
\begin{corollary}\label{cor:7.3}
One has
\begin{equation}\label{eq:94}
 d(N)\le D_N.
\end{equation}
Moreover,
\[
\frac{\mathcal{C}_N(S)}{D_N}
 =\left(\frac{S}{c_q}\right)^\sigma
 \exp\left[-\sigma\left(\frac{S}{c_q}-1\right)\right]\le1,
\]
with equality if and only if $S=c_q$.
\end{corollary}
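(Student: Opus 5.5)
The plan is to combine Proposition~\ref{prop:7.1} with the dilation law of Lemma~\ref{lem:7.2}, and then maximize a scalar function of $S$. The first step is to establish \eqref{eq:88} honestly, since it underlies everything else. Fix the Green function $G$ produced by the endpoint sequence, with coefficient $\eta$ and regular constant $C_\eta$. For $\tau>0$, the dilate $G_\tau$ solves the Green equation with coefficient $\tau^N\eta$. Its $L^q$ norm satisfies $\norm{G_\tau}_q^N=\tau^{-N^2/q}S$, and its regular constant is $C(S_\tau)=C_\eta-K_N^{-1/(N-1)}\log\tau$. By \eqref{eq:87} we then have
\[
\lambda_\beta C(S_\tau)=\lambda_\beta C(S)+\sigma\log(S_\tau/S).
\]
Thus, along the dilation orbit, $\exp(\lambda_\beta C(\cdot))$ is a constant multiple of $(\cdot)^\sigma$. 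Inserting this into \eqref{eq:85} gives $\mathcal{C}_N(S')=A_0S'^\sigma\exp(-\lambda_\beta S'/(N-1))$ for every $S'$ in the orbit. The constant $A_0$ absorbs $K_N/(N-\beta)$, $e^{H_{N-1}}$ and $e^{\lambda_\beta c_q/(N-1)}$.

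One point needs care here. Proposition~\ref{prop:7.1} was derived only for the actual $S=T^{N/q}$ attached to the sequence, so $\mathcal{C}_N(S')$ for other $S'$ should be read as the formula \eqref{eq:85} with $C_\eta$ replaced by the regular constant of the dilate. Only the value at the true $S$ is a proven bound for $d(N)$; the rest of the orbit serves for comparison.

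The second step is the scalar maximization. Set $\varphi(S)=\sigma\log S-\lambda_\beta S/(N-1)$. This function is strictly concave with derivative $\sigma/S-\lambda_\beta/(N-1)$, so its unique critical point is $S_*=(N-1)\sigma/\lambda_\beta$. Substituting $\sigma=q\delta/N$ and $\lambda_\beta=\delta\lambda_N$ gives $S_*=q(N-1)/(N\lambda_N)=c_q$, which is \eqref{eq:89}. The dilation $G_*$ with $\norm{G_*}_q^N=c_q$ exists because $\tau\mapsto\tau^{-N^2/q}S$ is a bijection of $(0,\infty)$.

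Next, Lemma~\ref{lem:6.4} applied to $G_*$ gives \eqref{eq:91}. The coefficient of $G_*$ is $\eta_*$ with $\eta_*\norm{G_*}_q^q=c_q=\norm{G_*}_q^N$, so $\eta_*=\norm{G_*}_q^{N-q}$. We then read off $A_*=C(c_q)$.

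For the ratio, use $C(S)=A_*+(\sigma/\lambda_\beta)\log(S/c_q)$ and the relation $\lambda_\beta/(N-1)=\sigma/c_q$. Together with \eqref{eq:85} and \eqref{eq:93}, these give
\[
\frac{\mathcal{C}_N(S)}{D_N}=\exp\Bigl[\sigma\log\frac{S}{c_q}-\frac{\sigma}{c_q}(S-c_q)\Bigr]=\Bigl(\frac{S}{c_q}\Bigr)^\sigma e^{-\sigma(S/c_q-1)}.
\]
Writing $x=S/c_q$, the inequality $\log x\le x-1$, with equality only at $x=1$, yields a ratio at most $1$ with equality iff $S=c_q$. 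Then $d(N)\le\mathcal{C}_N(S)\le D_N$, which is \eqref{eq:94}.

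The main obstacle I expect is the well-definedness of $D_N$. Uniqueness of the Green function is not assumed, so $A_*$ a priori depends on the orbit of the particular $G$ extracted from the subsequence. The statement is consistent with this because $D_N$ is defined from that fixed $G$. I would remark that $D_N$ depends only on the orbit, since any two normalizations within it coincide, and I would not attempt to prove orbit-independence here.
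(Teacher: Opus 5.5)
Your proposal is correct and follows the paper's route. You derive \eqref{eq:88} from the dilation law of Lemma~\ref{lem:7.2}, locate the maximizer $S_*=c_q$, reduce the ratio to $\log x\le x-1$, and conclude with $d(N)\le\mathcal{C}_N(S)\le D_N$ via Proposition~\ref{prop:7.1}. One small correction: your constant $A_0$ should also include the orbit-invariant factor $\e^{\lambda_\beta C(S)}S^{-\sigma}$, which does not affect the argument.
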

\begin{proof}
The formula follows from \eqref{eq:88} and \eqref{eq:89}. The inequality
is equivalent to $\log x\le x-1$ for $x>0$.
Combining with Proposition~\ref{prop:7.1} gives \eqref{eq:94}.
\end{proof}

\section{Test functions and proofs of the endpoint results}\label{sec:endpoint}
\subsection{The normalized Green-function test}\label{sec:8}
We construct an admissible function for the $a=N$ constraint whose
functional value is strictly larger than $D_N$. We adapt the Green-function
constructions in \cite{ref10,ref11}. The estimates required in the borderline
case $m=N-1$ are established in Appendix~\ref{sec:A}.

Set
\[
\gamma:=\frac{N-\beta}{N-1},\qquad
 B:=\left(\frac{K_N}{N-\beta}\right)^{1/(N-1)}.
\]
Let
\[
 L_\varepsilon:=\log\frac1\varepsilon,\qquad R=L_\varepsilon^M,
\]
where $M>0$ will be chosen large, and define
\[
t_\varepsilon:=\frac{N}{\lambda_N}\log\frac1{R\varepsilon}.
\]
Set $s_\varepsilon(t)=B\varepsilon^{-\gamma}\e^{-\lambda_\beta t/(N-1)}$
and define
\begin{equation}\label{eq:profile}
 h_\varepsilon(t)=
 \begin{cases}
 t_\varepsilon+\dfrac{N-1}{\lambda_\beta}
 \log\dfrac{1+BR^\gamma}{1+s_\varepsilon(t)},&t\ge t_\varepsilon,\\[1ex]
 t,&t<t_\varepsilon.
 \end{cases}
\end{equation}
Since $s_\varepsilon(t_\varepsilon)=BR^\gamma$, the profile is continuous.
Let
\begin{equation}\label{eq:100}
 Y_\varepsilon:=\norm{F(\nabla h_\varepsilon(G_*))}_N^N
 +\norm{h_\varepsilon(G_*)}_q^N,
\end{equation}
and set
\[
\varphi_\varepsilon:=Y_\varepsilon^{-1/N}h_\varepsilon(G_*).
\]
Both terms in $Y_\varepsilon$ are finite: the outer branch uses $G_*$
away from its pole, while the inner branch is bounded and its derivative
decays exponentially. Homogeneity gives the exact constraint
\begin{equation}\label{eq:101}
 \norm{F(\nabla\varphi_\varepsilon)}_N^N+\norm{\varphi_\varepsilon}_q^N=1.
\end{equation}

\begin{lemma}[Asymptotics of the Green-function test]\label{lem:8.1}
Choose $M$ so that
\begin{equation}\label{eq:102}
 M\gamma>1.
\end{equation}
The function $\varphi_\varepsilon$ defined above satisfies \eqref{eq:101}, and
\begin{equation}\label{eq:103}
 Y_\varepsilon=\frac{N}{\lambda_N}L_\varepsilon+
 \frac1{\lambda_\beta}\log\frac{K_N}{N-\beta}-
 \frac{N-1}{\lambda_\beta}H_{N-1}+o(Y_\varepsilon^{-1}).
\end{equation}
Moreover,
\begin{equation}\label{eq:104}
 \int_{\{G_*>t_\varepsilon\}}
 \frac{\Ph(\lambda_\beta\varphi_\varepsilon^{\frac{N}{N-1}})}{F^{o}(x)^\beta}\dd x
 \ge D_N+o(Y_\varepsilon^{-1}).
\end{equation}
On every fixed Wulff annulus $\mathcal{A}\Subset\R^N\setminus\{0\}$,
for sufficiently small $\varepsilon$,
\begin{equation}\label{eq:105}
 \varphi_\varepsilon=Y_\varepsilon^{-1/N}G_*\qquad\text{on }\mathcal{A}.
\end{equation}
Finally,
\begin{equation}\label{eq:106}
 \norm{\varphi_\varepsilon}_q^N=\frac{c_q}{Y_\varepsilon}+o(Y_\varepsilon^{-2}).
\end{equation}
\end{lemma}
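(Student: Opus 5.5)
The plan is to compute everything by splitting $\R^N$ along the level set $\{G_*=t_\varepsilon\}$. Outside it, $h_\varepsilon(G_*)=G_*$. Inside it, $h_\varepsilon$ reproduces the inner profile \eqref{eq:49} after the change of variables $x=\varepsilon y$.

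Assertion \eqref{eq:105} is immediate. On a fixed annulus $G_*$ is bounded, while $t_\varepsilon\to\infty$, so $h_\varepsilon(G_*)=G_*$ there for small $\varepsilon$. The exact constraint \eqref{eq:101} is just the $N$-homogeneity of both terms of $Y_\varepsilon$.

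\textbf{Energy.} For the Dirichlet energy I would use the coarea formula together with the flux identity \eqref{eq:70}. Since $\int_{\{G_*=t\}}F(\nabla G_*)^{N-1}F_\xi(\nabla G_*)\cdot\nu=\mathcal{P}(t)$, we get
\[
 \int F(\nabla h_\varepsilon(G_*))^N\dd x=\int_0^\infty h_\varepsilon'(t)^N\mathcal{P}(t)\dd t .
\]
\emph{Outer part.} Test \eqref{eq:91} with $G_*$ outside $W_r$, as in the derivation of \eqref{eq:76}, where $r=r(t_\varepsilon)\simeq R\varepsilon$. With the normalization \eqref{eq:90}, $\eta\norm{G_*}_q^q=c_q$. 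The outer energy is therefore
\[
 t_\varepsilon\mathcal{P}(t_\varepsilon)-c_q+\eta\int_{W_r}G_*^q
 =t_\varepsilon-c_q+O\bigl(r^N|\log r|^{q}\bigr),
\]
by \eqref{eq:71}. This error is $o(L_\varepsilon^{-1})$.

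\emph{Inner part.} Here $\mathcal{P}=1+O(r^N|\log r|^{q-1})$. Since $ds/dt=-\lambda_\beta s/(N-1)$, we have $h_\varepsilon'=s/(1+s)$. Substituting $s$ gives
\[
 \frac{N-1}{\lambda_\beta}\int_0^{BR^\gamma}\Bigl(\frac{s}{1+s}\Bigr)^N\frac{\dd s}{s}
 =\frac{N-1}{\lambda_\beta}\bigl[\log(1+BR^\gamma)-H_{N-1}+O(R^{-\gamma})\bigr].
\]
\emph{Cancellation of $\log R$.} Because $(N-1)\gamma/\lambda_\beta=N/\lambda_N$, the $\log R$ in $t_\varepsilon$ cancels the one in $\log(BR^\gamma)$. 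This produces exactly the constants in \eqref{eq:103}, apart from the extra $-c_q$.

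\textbf{$L^q$ term.} The difference $\norm{h_\varepsilon(G_*)}_q^N-\norm{G_*}_q^N$ lives on $W_r$. Its size is $O(r^N t_\varepsilon^q)$, so $\norm{h_\varepsilon(G_*)}_q^N=c_q+o(L_\varepsilon^{-2})$. The $+c_q$ cancels the $-c_q$ above. The residual error is $O(R^{-\gamma})=O(L_\varepsilon^{-M\gamma})=o(L_\varepsilon^{-1})$ by \eqref{eq:102}, and $Y_\varepsilon\simeq L_\varepsilon$; this proves \eqref{eq:103}. Dividing by $Y_\varepsilon$ gives \eqref{eq:106}.

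\textbf{Lower bound \eqref{eq:104}.} This is the main step. On the inner region write $h_\varepsilon=Y_\varepsilon+(h_\varepsilon-Y_\varepsilon)$. Convexity, $(1+x)^{N/(N-1)}\ge1+\frac{N}{N-1}x$ for $x\ge-1$, gives
\[
 \lambda_\beta\varphi_\varepsilon^{\frac N{N-1}}
 =\lambda_\beta Y_\varepsilon\Bigl(\frac{h_\varepsilon}{Y_\varepsilon}\Bigr)^{\frac N{N-1}}
 \ge\lambda_\beta Y_\varepsilon+\frac{N\lambda_\beta}{N-1}(h_\varepsilon-Y_\varepsilon).
\]
This avoids the quadratic term $(h_\varepsilon-Y_\varepsilon)^2/Y_\varepsilon$, which is of size $(\log R)^2/L_\varepsilon$ and so not negligible; I expect controlling it to be the crux, and a one-sided inequality is all that is needed here.

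\emph{Exponent.} Insert \eqref{eq:103} and the definition of $h_\varepsilon$. Then express $G_*$ through \eqref{eq:92}, with an $O(r^N|\log r|^{q-1})$ error from \eqref{eq:73}. In the variable $y=x/\varepsilon$ the exponent becomes
\[
 H_{N-1}+\lambda_\beta A_*+\log\frac{K_N}{N-\beta}+(N-\beta)\log\frac1\varepsilon-N\log\bigl(1+BF^o(y)^\gamma\bigr)+o(L_\varepsilon^{-1}).
\]
The factor $\varepsilon^{-(N-\beta)}$ is compensated by the Jacobian together with the weight $F^{o}(x)^{-\beta}$.

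\emph{Integral.} The integral becomes $\frac{K_N}{N-\beta}e^{H_{N-1}+\lambda_\beta A_*}$ times the mass identity \eqref{eq:50}, which equals $D_N$. It is truncated at $F^o(y)\lesssim R$, and the tail is $O(R^{-\gamma})=o(L_\varepsilon^{-1})$. Finally, replacing $\Ph$ by $\exp$ costs only polynomial terms on a set of measure $O((R\varepsilon)^{N-\beta})$, which is negligible.
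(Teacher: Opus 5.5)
Your proposal is correct and tracks the paper's Appendix~\ref{sec:A} closely. It uses the same split at the level $t_\varepsilon$, and your outer-energy error $\eta\int_{W_r}G_*^{q-1}(G_*-t_\varepsilon)$ from testing \eqref{eq:91} is exactly the one in \eqref{eq:A.4}. The inner energy is likewise obtained by coarea plus flux. The $\log R$ cancels through $(N-1)\gamma/\lambda_\beta=N/\lambda_N$, and $-c_q$ cancels against the $L^q$ term. You also use the same one-sided convexity bound on the exponent; as you note, the quadratic remainder is nonnegative and simply discarded, so nothing needs controlling there. The Taylor polynomial is removed on a set of weighted measure $O((R\varepsilon)^{N-\beta})$, as in the paper.

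The one genuine difference is how you evaluate the inner integral in \eqref{eq:104}. The paper never inserts the pole expansion pointwise. Writing $M(t)$ for the weighted measure of $\{G_*>t\}$, it uses $\mathcal{P}\le1$ to show that $\e^{\lambda_\beta t}M(t)$ is nonincreasing. This gives $-M'(t)\ge K_N^{N/(N-1)}\e^{\lambda_\beta(A_*-t)}$ for all large $t$, with no error term. The paper then integrates in the level variable against $(N-1)s^{N-2}(1+s)^{-N}\dd s$, which needs only the definition of $A_*$ and not the rate in \eqref{eq:A.1}. You instead use the rate in \eqref{eq:73} to control $s(G_*(x))$ pointwise up to an exponentially small factor, then rescale to the bubble variable and invoke \eqref{eq:50}. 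Your route is more transparent but uses slightly more information about $G_*$. The two identities coincide, since \eqref{eq:50} in the variable $s=BF^{o}(y)^\gamma$ is exactly the Beta integral the paper uses.

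One bookkeeping correction. With $y=x/\varepsilon$ one has $s=BF^{o}(y)^\gamma\e^{-\lambda_\beta A_*/(N-1)}(1+o_{\exp}(1))$. So $\lambda_\beta A_*$ cannot sit as an additive constant in the exponent next to $-N\log(1+BF^{o}(y)^\gamma)$. The correct bubble variable is $x=\varepsilon\e^{\lambda_N A_*/N}y$, which gives $s=BF^{o}(y)^\gamma(1+o_{\exp}(1))$ and turns the inner region into $W_{R(1+o(1))}$. The factor $\e^{\lambda_\beta A_*}$ in $D_N$ then comes from the Jacobian and weight, since $(\varepsilon\e^{\lambda_N A_*/N})^{N-\beta}=\varepsilon^{N-\beta}\e^{\lambda_\beta A_*}$. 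The final value $D_N(1-O(R^{-\gamma}))$ is unchanged.
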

\begin{proof}
The normalization follows from \eqref{eq:100}. Appendix~\ref{sec:A}
proves the expansions, including the cancellation of the outer energy
constant $-c_q$ with the $L^q$ contribution and the refined
$o(Y_\varepsilon^{-1})$ remainder.
\end{proof}
The first nonzero Taylor term gives the strict comparison.
\begin{proposition}[Strict comparison at $a=N$]\label{prop:8.2}
Assume $1<q<q_+$. Then
\begin{equation}\label{eq:107}
 \mathcal{M}(N)>D_N\ge d(N).
\end{equation}
\end{proposition}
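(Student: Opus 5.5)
The second inequality $D_N\ge d(N)$ is Corollary~\ref{cor:7.3}, so it remains to show $\mathcal{M}(N)>D_N$. We test $\J$ on the admissible functions $\varphi_\varepsilon$ of Lemma~\ref{lem:8.1}. By \eqref{eq:101} each $\varphi_\varepsilon$ satisfies the $a=b=N$ constraint, so $\mathcal{M}(N)\ge\J(\varphi_\varepsilon)$. We split $\J(\varphi_\varepsilon)$ into the inner region $\{G_*>t_\varepsilon\}$ and its complement. Since $\Ph\ge0$, we may discard everything in the complement except a fixed Wulff annulus $\mathcal{A}\Subset\R^N\setminus\{0\}$, chosen once and for all. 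Then \eqref{eq:104} gives
\[
 \J(\varphi_\varepsilon)\ge D_N+o(Y_\varepsilon^{-1})
 +\int_{\mathcal{A}}\frac{\Ph(\lambda_\beta\varphi_\varepsilon^{\frac{N}{N-1}})}{F^{o}(x)^\beta}\dd x .
\]
For small $\varepsilon$ the annulus $\mathcal{A}$ lies in $\{G_*\le t_\varepsilon\}$, because $t_\varepsilon\to\infty$ while $G_*$ is bounded on $\mathcal{A}$. The key point is to show that the annulus integral is positive and of strictly larger order than $Y_\varepsilon^{-1}$.

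On $\mathcal{A}$, \eqref{eq:105} gives $\varphi_\varepsilon=Y_\varepsilon^{-1/N}G_*$, and this tends to zero uniformly by \eqref{eq:103}. Every Taylor coefficient of $\Ph$ is positive, so keeping only the first term $j=m$ gives
\[
 \int_{\mathcal{A}}\frac{\Ph(\lambda_\beta\varphi_\varepsilon^{\frac{N}{N-1}})}{F^{o}(x)^\beta}\dd x
 \ge\frac{\lambda_\beta^m}{m!}\,Y_\varepsilon^{-\frac{m}{N-1}}
 \int_{\mathcal{A}}\frac{G_*^{\frac{mN}{N-1}}}{F^{o}(x)^\beta}\dd x
 =c_{\mathcal{A}}\,Y_\varepsilon^{-\frac{m}{N-1}} .
\]
Here $c_{\mathcal{A}}>0$ because $G_*>0$ on $\mathcal{A}$. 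The positivity of $G_*$ follows from \eqref{eq:70}: the function $g$ is nonincreasing, and if $g$ vanished at a finite radius $r_0$ it would stay zero beyond $r_0$, so the flux would vanish there. The flux identity then yields $1=\eta\int_{W_{r_0}}G^{q-1}\dd x$. This is not immediately contradictory, so instead I would choose $\mathcal{A}$ inside the ball where $G_*>0$. That set is a nonempty neighbourhood of the pole minus the pole, by \eqref{eq:92}, so the choice is always possible.

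By \eqref{eq:7}, the hypothesis $q<q_+$ is equivalent to $m\le N-1$, hence $m/(N-1)\le1$.

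\textbf{Case $m\le N-2$.} The exponent $m/(N-1)$ is strictly less than $1$. Since $Y_\varepsilon\simeq L_\varepsilon\to\infty$, the term $c_{\mathcal{A}}Y_\varepsilon^{-m/(N-1)}$ dominates the error $o(Y_\varepsilon^{-1})$. Therefore $\J(\varphi_\varepsilon)>D_N$ for all small $\varepsilon$.

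\textbf{Case $m=N-1$.} Here the gain is exactly $c_{\mathcal{A}}Y_\varepsilon^{-1}$, of the same order as the error term. Strictness relies entirely on the error in \eqref{eq:104} being genuinely $o(Y_\varepsilon^{-1})$ and not merely $O(Y_\varepsilon^{-1})$. That refinement is exactly what Lemma~\ref{lem:8.1} provides, through the cancellation of $-c_q$ with the $L^q$ contribution \eqref{eq:106} and the refined expansion \eqref{eq:103}. With it we get $\J(\varphi_\varepsilon)\ge D_N+(c_{\mathcal{A}}+o(1))Y_\varepsilon^{-1}>D_N$.

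I expect the main obstacle to be the borderline case $m=N-1$. There I must confirm that the annulus contribution is not already counted inside \eqref{eq:104}, which holds because the two regions are disjoint. I must also confirm that no hidden negative $O(Y_\varepsilon^{-1})$ term arises from the normalization. The exact constraint \eqref{eq:101} rules this out, since the mismatch between $Y_\varepsilon$ and the pure logarithmic energy is fully absorbed into \eqref{eq:103}. Finally, the Taylor bound on $\mathcal{A}$ needs only positivity of the coefficients, so no uniformity issue arises there. Combining both cases yields $\mathcal{M}(N)\ge\J(\varphi_\varepsilon)>D_N\ge d(N)$.
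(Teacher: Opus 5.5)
Your proof is correct and matches the paper's argument essentially step for step. You test with the exactly normalized $\varphi_\varepsilon$, add the inner bound \eqref{eq:104} to the first Taylor term on a small Wulff annulus near the pole where $G_*>0$, compare $Y_\varepsilon^{-m/(N-1)}$ with $o(Y_\varepsilon^{-1})$ in the cases $m\le N-2$ and $m=N-1$, and take $D_N\ge d(N)$ from Corollary~\ref{cor:7.3}. Your abandoned attempt at global positivity of $G_*$ is unnecessary; choosing the annulus via the pole expansion, as you end up doing, is exactly what the paper does.
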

\begin{proof}
Since $q<q_+$, \eqref{eq:7} gives $m\le N-1$. Fix a Wulff annulus
\[
 \mathcal{A}=W_{2r_0}\setminus W_{r_0}
\]
with $r_0>0$ chosen sufficiently small that $G_*>0$ on its closure, which
is possible by the pole expansion. For small $\varepsilon$, the annulus is
contained in $\{G_*<t_\varepsilon\}$ and \eqref{eq:105} holds. Hence
\begin{equation}\label{eq:108}
\begin{aligned}
 \int_{\mathcal{A}}\frac{\Ph(\lambda_\beta\varphi_\varepsilon^{\frac{N}{N-1}})}
 {F^{o}(x)^\beta}\dd x
 &\ge\frac{\lambda_\beta^m}{m!}Y_\varepsilon^{-m/(N-1)}
 \int_{\mathcal{A}}\frac{G_*^{\frac{mN}{N-1}}}{F^{o}(x)^\beta}\dd x\\
 &=c_{\mathcal{A}}Y_\varepsilon^{-m/(N-1)},
\end{aligned}
\end{equation}
where $c_{\mathcal{A}}>0$. Combining \eqref{eq:104} and \eqref{eq:108},
\begin{equation}\label{eq:109}
 \J(\varphi_\varepsilon)\ge D_N+
 c_{\mathcal{A}}Y_\varepsilon^{-m/(N-1)}+o(Y_\varepsilon^{-1}).
\end{equation}
If $m\le N-2$, the positive term dominates $Y_\varepsilon^{-1}$.
If $m=N-1$, it equals $c_{\mathcal{A}}/Y_\varepsilon$, and the remainder
is $o(Y_\varepsilon^{-1})$. Thus $\J(\varphi_\varepsilon)>D_N$ for all
sufficiently small $\varepsilon$. Since \eqref{eq:101} holds exactly,
$\varphi_\varepsilon\in\Sa_N$ and therefore $\mathcal{M}(N)>D_N$.
The second inequality follows from Corollary~\ref{cor:7.3}.
\end{proof}
We now prove Theorems~\ref{thm:1.2} and~\ref{thm:1.3}, treating $m\le N-2$
and $m=N-1$ separately, and then deduce Corollary~\ref{cor:1.2}.
\subsection{The range \texorpdfstring{$q<q_-$}{q<q-}}\label{sec:9.1}
\begin{proposition}\label{prop:9.1}
If $q<q_-$, then
\begin{equation}\label{eq:116}
 \mathcal{M}(a)>d(a)\qquad\text{for every }a>0.
\end{equation}
\end{proposition}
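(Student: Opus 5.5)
The plan is to transport the Green-function test $\varphi_\varepsilon$ of Lemma~\ref{lem:8.1}, which lies on $\Sa_N$, to the constraint surface $\Sa_a$ with the exact dilation $T_{N,a}$ of Lemma~\ref{lem:3.3}. We then compare the result with $d(a)$, using the endpoint scaling of Lemma~\ref{lem:3.1} and the bound $d(N)\le D_N$ of Corollary~\ref{cor:7.3}. By \eqref{eq:7}, the hypothesis $q<q_-$ means $m\le N-2$, so $m/(N-1)<1$. This gap in exponents is what should make the comparison work for every $a$, with no threshold.

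Fix $a>0$ and set $r_\varepsilon:=\norm{F(\nabla\varphi_\varepsilon)}_N\in(0,1)$. By \eqref{eq:101} and \eqref{eq:106},
\[
 x_\varepsilon:=1-r_\varepsilon^N=\norm{\varphi_\varepsilon}_q^N=\frac{c_q}{Y_\varepsilon}+o(Y_\varepsilon^{-2}),
\]
so $x_\varepsilon\simeq Y_\varepsilon^{-1}\to0$. Since $r_\varepsilon^a=(1-x_\varepsilon)^{a/N}$, a first-order Taylor expansion gives
\[
 \frac{1-r_\varepsilon^a}{1-r_\varepsilon^N}=\frac aN\bigl(1+O(x_\varepsilon)\bigr)=\frac aN\bigl(1+O(Y_\varepsilon^{-1})\bigr).
\]
The constant in $O(\cdot)$ depends only on $a/N$; its sign is irrelevant. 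Then Lemma~\ref{lem:3.3} and \eqref{eq:109} give
\[
 \mathcal{M}(a)\ge\J(T_{N,a}\varphi_\varepsilon)
 =\left(\frac{1-r_\varepsilon^a}{1-r_\varepsilon^N}\right)^\sigma\J(\varphi_\varepsilon)
 \ge\left(\frac aN\right)^\sigma\bigl(1+O(Y_\varepsilon^{-1})\bigr)
 \Bigl(D_N+c_{\mathcal{A}}Y_\varepsilon^{-m/(N-1)}+o(Y_\varepsilon^{-1})\Bigr).
\]
Since $m/(N-1)<1$ and $Y_\varepsilon\to\infty$, the positive term $c_{\mathcal{A}}Y_\varepsilon^{-m/(N-1)}$ dominates both the $O(Y_\varepsilon^{-1})D_N$ correction from the rescaling and the $o(Y_\varepsilon^{-1})$ remainder. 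Hence, for small $\varepsilon$,
\[
 \mathcal{M}(a)>\left(\frac aN\right)^\sigma D_N\ge\left(\frac aN\right)^\sigma d(N)=d(a),
\]
where the last two steps use \eqref{eq:94} and \eqref{eq:21}. This proves \eqref{eq:116}.

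The only delicate point is that the dilation factor is not exactly $(a/N)^\sigma$. Its error is of order $Y_\varepsilon^{-1}$ with a sign depending on whether $a<N$ or $a>N$. This is precisely why the case $m=N-1$ has a genuine threshold: there the gain is also of order $Y_\varepsilon^{-1}$. For $m\le N-2$ the gain is of strictly lower order, so no sign control is needed. The only inputs to verify are that \eqref{eq:106} gives $x_\varepsilon\simeq Y_\varepsilon^{-1}$, and that $c_{\mathcal{A}}>0$ is independent of $\varepsilon$, which Proposition~\ref{prop:8.2} already provides. Attainment for every $a>0$ then follows from Lemma~\ref{lem:2.2}.
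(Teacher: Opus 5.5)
Your proposal is correct and follows the paper's proof essentially verbatim. Like the paper, you transport $\varphi_\varepsilon$ to $\Sa_a$ by $T_{N,a}$ and expand the dilation factor as $(a/N)^\sigma[1+O_a(Y_\varepsilon^{-1})]$ using $x_\varepsilon=\norm{\varphi_\varepsilon}_q^N=O(Y_\varepsilon^{-1})$; the gain $c_{\mathcal{A}}Y_\varepsilon^{-m/(N-1)}$ from \eqref{eq:109} then dominates because $m\le N-2$, and \eqref{eq:94} with \eqref{eq:21} gives $\left(\frac{a}{N}\right)^\sigma D_N\ge d(a)$.
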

\begin{proof}
Fix $a>0$ and put $v_{a,\varepsilon}:=T_{N,a}\varphi_\varepsilon$.
Lemma~\ref{lem:3.3} gives $v_{a,\varepsilon}\in\Sa_a$ and, with
$x_\varepsilon=\norm{\varphi_\varepsilon}_q^N=O(Y_\varepsilon^{-1})$,
\begin{equation}\label{eq:114}
 \J(v_{a,\varepsilon})=
 \left(\frac{1-(1-x_\varepsilon)^{a/N}}{x_\varepsilon}\right)^\sigma
 \J(\varphi_\varepsilon).
\end{equation}
The factor has the expansion
\begin{equation}\label{eq:115}
 \left(\frac{1-(1-x_\varepsilon)^{a/N}}{x_\varepsilon}\right)^\sigma
 =\left(\frac{a}{N}\right)^\sigma
 \bigl[1+O_a(Y_\varepsilon^{-1})\bigr].
\end{equation}
Since $m/(N-1)<1$, \eqref{eq:109} implies
\[
 \J(v_{a,\varepsilon})\ge
 \left(\frac{a}{N}\right)^\sigma D_N+
 c_aY_\varepsilon^{-m/(N-1)}+O_a(Y_\varepsilon^{-1})
 >\left(\frac{a}{N}\right)^\sigma D_N
\]
for small $\varepsilon$, after decreasing $c_a>0$ if necessary.
By Lemma~\ref{lem:3.1} and Corollary~\ref{cor:7.3}, the last quantity is
at least $d(a)$.
\end{proof}
\begin{proof}[Proof of Theorem~\ref{thm:1.2}]
Proposition~\ref{prop:9.1} gives $\mathcal{M}(a)>d(a)$ for every $a>0$.
Lemma~\ref{lem:2.2} then yields a nonnegative Wulff-symmetric maximizer.
\end{proof}

\subsection{The range \texorpdfstring{$q_-\le q<q_+$}{q- <= q < q+}}\label{sec:9.2}
\begin{proof}[Proof of Theorem~\ref{thm:1.3}]
Here $m=N-1$. Proposition~\ref{prop:8.2} gives $\mathcal{M}(N)>d(N)$,
so Proposition~\ref{prop:3.5} yields the threshold $a_c$ defined by
\eqref{eq:29}. Attainment for $0<a<a_c$ follows from Lemma~\ref{lem:2.2},
and nonattainment for $a>a_c$, when $a_c<\infty$, is part of the threshold criterion.
\end{proof}
\begin{remark}[Scope of the comparison]\label{rem:9.2}
When $m=N-1$, the annular contribution and the dilation correction both
have order $Y_\varepsilon^{-1}$. Failure of this test family to give a
strict comparison for some $a$ does not imply nonattainment and does not
prove $a_c<\infty$. For $q\ge q_+$, one has $m\ge N$, so the annular term
is $o(Y_\varepsilon^{-1})$ and the present argument no longer applies;
the conditional results of \cite[Theorem 1.2]{ref12} concern that complementary range.
\end{remark}

\subsection{The case \texorpdfstring{$q=N$}{q=N}}\label{sec:9.3}
\begin{proof}[Proof of Corollary~\ref{cor:1.2}]
Part (i) follows directly from Theorem~\ref{thm:1.1} with $q=N$.
Since $\beta>0$, one has $N<q_+$. Also,
\[
 N<q_-\iff\beta>\frac{N}{N-1}.
\]
Thus, if $0<\beta\le N/(N-1)$, Theorem~\ref{thm:1.3} applies and gives
part (ii). If $N\ge3$ and $N/(N-1)<\beta<N$, Theorem~\ref{thm:1.2}
gives part (iii). For $N=2$ the latter interval is empty.
In each case, the corresponding theorem also gives a nonnegative
Wulff-symmetric maximizer whenever the supremum is attained.

In Proposition~\ref{prop:2.1}, setting $q=N$ gives $c=1$ and
$\beta+q\delta=N$. Hence \eqref{eq:10} becomes \eqref{eq:8}, and the
attainment correspondence shows that the threshold is independent of $F$.
\end{proof}

\appendix
\ifdefined\titleformat
\titleformat{\section}{\normalfont\Large\bfseries}{Appendix \thesection.}{1em}{}
\fi
\section{Asymptotic normalization of the Green-function test}\label{sec:A}
We prove the expansions and integral estimates in Lemma~\ref{lem:8.1}.
From the definition of $h_\varepsilon$, one has $h_\varepsilon=O(L_\varepsilon)$
on its inner branch. The unscaled outer energy is $t_\varepsilon+O(1)$,
the inner energy is $O(\log R)$, and the unscaled $L^q$ norm tends to
$\norm{G_*}_q$. Thus \eqref{eq:100} first gives $Y_\varepsilon\simeq L_\varepsilon$,
before any refined expansion is used. The nonlinear absorption term in
the Green equation contributes only exponentially small errors near the
pole. Throughout this appendix, $o_{\exp}(1)$ denotes a quantity bounded
by $CL_\varepsilon^A(R\varepsilon)^\vartheta$ for some fixed $A\ge0$ and
$\vartheta>0$; in particular, $o_{\exp}(1)=o(L_\varepsilon^{-M_0})$ for
every fixed $M_0>0$.

Let $G_*$ be the normalized Green function satisfying
\eqref{eq:90}--\eqref{eq:91}. Write $G_*(x)=g(r)$, $r=F^{o}(x)$.
In this appendix, $P$ and $\mathcal{P}$ denote the radial and level-set
fluxes of $G_*$, defined as in \eqref{eq:70} and \eqref{eq:72}.
The flux estimate \eqref{eq:71} also holds for $G_*$ and gives
\begin{equation}\label{eq:A.1}
 G_*(r)=-K_N^{-1/(N-1)}\log r+A_*+O\bigl(r^N|\log r|^{q-1}\bigr).
\end{equation}
If $r_t$ is defined by $G_*(r_t)=t$, then
\begin{equation}\label{eq:A.2}
 r_t^N=O(\e^{-\lambda_N t})
\end{equation}
and, for each fixed $s>0$,
\begin{equation}\label{eq:A.3}
 \int_{\{G_*>t\}}G_*^s\dd x=O(t^s\e^{-\lambda_N t}).
\end{equation}
\subsection{Dirichlet energy on the sublevel region}\label{sec:A.1}
Testing \eqref{eq:91} on $\R^N\setminus W_r$ and using \eqref{eq:90} gives
\begin{equation}\label{eq:A.4}
 \int_{\{G_*<t\}}F(\nabla G_*)^N\dd x
 =t-c_q+O(t^q\e^{-\lambda_Nt}).
\end{equation}
Indeed, the error is
\[
 \norm{G_*}_q^{N-q}\int_{\{G_*>t\}}G_*^{q-1}(G_*-t)\dd x,
\]
which is controlled by \eqref{eq:A.3}.

On the outer linear branch of \eqref{eq:profile},
$\varphi_\varepsilon=Y_\varepsilon^{-1/N}G_*$. Hence
\begin{equation}\label{eq:A.5}
 \int_{\{G_*<t_\varepsilon\}}F(\nabla\varphi_\varepsilon)^N\dd x
 =\frac1{Y_\varepsilon}\left[\frac{N}{\lambda_N}
 (L_\varepsilon-\log R)-c_q+o_{\exp}(1)\right].
\end{equation}
\subsection{Dirichlet energy on the superlevel region}\label{sec:A.2}
On the inner branch, \eqref{eq:profile} gives
\[
 h_\varepsilon'(t)=\frac{s(t)}{1+s(t)},\qquad
 \frac{\dd}{\dd t}\bigl(Y_\varepsilon^{-1/N}h_\varepsilon(t)\bigr)
 =Y_\varepsilon^{-1/N}\frac{s(t)}{1+s(t)},
\]
where
\[
 s(t)=B\varepsilon^{-\gamma}\e^{-\lambda_\beta t/(N-1)}.
\]
The coarea formula and the flux representation yield
\begin{equation}\label{eq:A.6}
 \int_{\{G_*>t_\varepsilon\}}F(\nabla\varphi_\varepsilon)^N\dd x
 =\frac1{Y_\varepsilon}\int_{t_\varepsilon}^\infty
 \left(\frac{s(t)}{1+s(t)}\right)^N\mathcal{P}(t)\dd t.
\end{equation}
By \eqref{eq:71}, \eqref{eq:A.2} and \eqref{eq:72}, the error from replacing
$\mathcal{P}(t)$ by one is exponentially small. With the change of variables
$s=s(t)$,
\[
 \int_{t_\varepsilon}^\infty\left(\frac{s(t)}{1+s(t)}\right)^N\dd t
 =\frac{N-1}{\lambda_\beta}\int_0^{BR^\gamma}\frac{s^{N-1}}{(1+s)^N}\dd s.
\]
The asymptotic identity
\begin{equation}\label{eq:A.7}
 \int_0^L\frac{s^{N-1}}{(1+s)^N}\dd s
 =\log L-H_{N-1}+O(L^{-1})\qquad(L\to\infty)
\end{equation}
gives
\begin{equation}\label{eq:A.8}
\begin{aligned}
 \int_{\{G_*>t_\varepsilon\}}F(\nabla\varphi_\varepsilon)^N\dd x
 =\frac1{Y_\varepsilon}\bigg[&
 \frac{N}{\lambda_N}\log R+\frac1{\lambda_\beta}\log\frac{K_N}{N-\beta}\\
 &-\frac{N-1}{\lambda_\beta}H_{N-1}+O(R^{-\gamma})+o_{\exp}(1)\bigg].
\end{aligned}
\end{equation}
Combining \eqref{eq:A.5} and \eqref{eq:A.8},
\begin{equation}\label{eq:A.9}
\begin{aligned}
 \norm{F(\nabla\varphi_\varepsilon)}_N^N
 =\frac1{Y_\varepsilon}\bigg[&
 \frac{N}{\lambda_N}L_\varepsilon+\frac1{\lambda_\beta}\log\frac{K_N}{N-\beta}
 -\frac{N-1}{\lambda_\beta}H_{N-1}\\
 &-c_q+O(R^{-\gamma})+o_{\exp}(1)\bigg].
\end{aligned}
\end{equation}
\subsection{The \texorpdfstring{$L^q$}{Lq} term and normalization}\label{sec:A.3}
On the outer linear branch,
\[
 \varphi_\varepsilon=Y_\varepsilon^{-1/N}G_*.
\]
Using \eqref{eq:A.3},
\[
\int_{\{G_*<t_\varepsilon\}}|\varphi_\varepsilon|^q\dd x
 =Y_\varepsilon^{-q/N}\bigl(\norm{G_*}_q^q+o(Y_\varepsilon^{-M_0})\bigr)
\]
for every fixed $M_0>0$. The superlevel region has Euclidean volume
$O((R\varepsilon)^N)$, while $\varphi_\varepsilon=O(Y_\varepsilon^{(N-1)/N})$ there,
so its $L^q$ mass is exponentially small. Hence
\[
 \norm{\varphi_\varepsilon}_q^N
 =\frac{\norm{G_*}_q^N}{Y_\varepsilon}+o(Y_\varepsilon^{-2})
 =\frac{c_q}{Y_\varepsilon}+o(Y_\varepsilon^{-2}),
\]
which is \eqref{eq:106}.

Imposing \eqref{eq:101} in \eqref{eq:A.9} cancels $-c_q$ with the $L^q$
contribution and gives
\begin{equation}\label{eq:A.11}
\begin{aligned}
 Y_\varepsilon={}&\frac{N}{\lambda_N}L_\varepsilon+
 \frac1{\lambda_\beta}\log\frac{K_N}{N-\beta}
 -\frac{N-1}{\lambda_\beta}H_{N-1}\\
 &+O(R^{-\gamma})+o_{\exp}(1).
\end{aligned}
\end{equation}
Since $Y_\varepsilon\simeq L_\varepsilon$ and $R=L_\varepsilon^M$,
condition \eqref{eq:102} implies
\[
 R^{-\gamma}=o(Y_\varepsilon^{-1}),
\]
which proves \eqref{eq:103}.

For the inner-region expansion, abbreviate
\[
 A_\varepsilon:=t_\varepsilon+\frac{N-1}{\lambda_\beta}
 \log(1+BR^\gamma)-Y_\varepsilon.
\]
Using \eqref{eq:103} and $R^{-\gamma}=o(Y_\varepsilon^{-1})$ gives
\begin{equation}\label{eq:A.12}
 A_\varepsilon=\frac{N-1}{\lambda_\beta}H_{N-1}+o(Y_\varepsilon^{-1}).
\end{equation}
\subsection{Weighted level sets and the inner-region integral}\label{sec:A.4}
Set
\[
 M(t):=\int_{\{G_*>t\}}F^{o}(x)^{-\beta}\dd x.
\]
For large $t$ the radial flux is positive, so the inverse radius $r_t$
is differentiable and
\[
 M(t)=\frac{K_N}{N-\beta}r_t^{N-\beta},\qquad
 -g'(r_t)=\frac{\mathcal{P}(t)^{1/(N-1)}}{K_N^{1/(N-1)}r_t}.
\]
Differentiating the radial volume formula gives
\[
-M'(t)=\lambda_\beta\mathcal{P}(t)^{-1/(N-1)}M(t)
 \ge\lambda_\beta M(t),
\]
since $0<\mathcal{P}(t)\le1$. Thus $\e^{\lambda_\beta t}M(t)$ is
nonincreasing. From \eqref{eq:92},
\[
\lim_{t\to\infty}\e^{\lambda_\beta t}M(t)
 =\frac{K_N}{N-\beta}\e^{\lambda_\beta A_*}.
\]
Consequently,
\begin{equation}\label{eq:A.15}
 -M'(t)\ge K_N^{N/(N-1)}\e^{\lambda_\beta(A_*-t)}
\end{equation}
for all large $t$.

On the inner branch, write
\[
 \varphi_\varepsilon=Y_\varepsilon^{(N-1)/N}\left(1+\frac{k_\varepsilon(s)}{Y_\varepsilon}\right),
 \qquad
 k_\varepsilon(s):=A_\varepsilon-\frac{N-1}{\lambda_\beta}\log(1+s).
\]
Since $|k_\varepsilon(s)|=O(\log R)=o(Y_\varepsilon)$ uniformly for
$0\le s\le BR^\gamma$, the bracket is positive for small $\varepsilon$.
Convexity of $x\mapsto x^{\frac{N}{N-1}}$ gives
\[
 \varphi_\varepsilon^{\frac{N}{N-1}}\ge Y_\varepsilon+\frac{N}{N-1}k_\varepsilon(s).
\]
Using \eqref{eq:A.12}, we obtain
\[
 \e^{\lambda_\beta\varphi_\varepsilon^{\frac{N}{N-1}}}
 \ge\e^{\lambda_\beta Y_\varepsilon+NH_{N-1}+o(Y_\varepsilon^{-1})}(1+s)^{-N}.
\]
Combining this inequality with \eqref{eq:A.15}, the substitution
\[
 s=B\varepsilon^{-\gamma}\e^{-\lambda_\beta t/(N-1)},
\]
and the normalization formula \eqref{eq:103}, one obtains
\begin{equation}\label{eq:A.16}
 \int_{\{G_*>t_\varepsilon\}}
 \frac{\e^{\lambda_\beta\varphi_\varepsilon^{\frac{N}{N-1}}}}{F^{o}(x)^\beta}\dd x
 \ge D_N\e^{o(Y_\varepsilon^{-1})}
 \int_0^{BR^\gamma}(N-1)\frac{s^{N-2}}{(1+s)^N}\dd s.
\end{equation}
The measure
\[
 (N-1)\frac{s^{N-2}}{(1+s)^N}\dd s
\]
has total mass one on $(0,\infty)$, and its tail beyond $BR^\gamma$ is
$O(R^{-\gamma})=o(Y_\varepsilon^{-1})$. Hence
\begin{equation}\label{eq:A.17}
 \int_{\{G_*>t_\varepsilon\}}
 \frac{\e^{\lambda_\beta\varphi_\varepsilon^{\frac{N}{N-1}}}}{F^{o}(x)^\beta}\dd x
 \ge D_N+o(Y_\varepsilon^{-1}).
\end{equation}
It remains to replace the exponential by $\Ph$. Since $m$ is fixed,
the polynomial $\e^s-\Ph(s)$ has fixed degree $m-1$. The weighted measure
of the inner region is $O((R\varepsilon)^{N-\beta})$, and
$\lambda_\beta\varphi_\varepsilon^{\frac{N}{N-1}}=O(Y_\varepsilon)$ there. Its integral
is therefore exponentially small and, in particular, $o(Y_\varepsilon^{-1})$.
This proves \eqref{eq:104}. The identity \eqref{eq:105} follows directly
from the outer linear branch of \eqref{eq:profile}.

\section{Differentiability of the subcritical functional}\label{sec:diff}
\begin{lemma}\label{lem:differentiability}
The functional $\J$ is continuously Fr\'echet differentiable on
\[
 \mathcal{U}:=\{u\in\D:\norm{F(\nabla u)}_N<1\},
\]
and
\[
 D\J(u)[h]=\frac{N\lambda_\beta}{N-1}\int_{\R^N}
 \Ph'(\lambda_\beta|u|^{\frac{N}{N-1}})|u|^{\frac{2-N}{N-1}}u\,h\,F^{o}(x)^{-\beta}\dd x.
\]
\end{lemma}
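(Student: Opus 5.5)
The plan is to prove differentiability in three stages. First, a pointwise G\^ateaux derivative formula. Second, a local domination bound, uniform on a neighbourhood of $u$, that justifies differentiating under the integral. Third, continuity of the candidate derivative $u\mapsto D\J(u)$ from $\mathcal U$ into $\D^*$, which upgrades G\^ateaux to continuous Fr\'echet differentiability.

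Write $\Psi(t):=\Ph(\lambda_\beta|t|^{\frac N{N-1}})$. Since $\Ph(s)=O(s^m)$ at $0$ with $m\ge1$, $\Psi$ is $C^1$ on $\R$, and
\[
\Psi'(t)=\frac{N\lambda_\beta}{N-1}\Ph'(\lambda_\beta|t|^{\frac N{N-1}})|t|^{\frac{2-N}{N-1}}t .
\]
Near zero, $|\Psi'(t)|\le C|t|^{\frac{mN}{N-1}-1}$, because $\Ph'(s)=O(s^{m-1})$. By the mean value theorem,
\[
\frac{\Psi(u+\tau h)-\Psi(u)}{\tau}=\Psi'(u+\theta\tau h)h
\]
for some $\theta\in(0,1)$. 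It therefore suffices to dominate $\sup_{|\tau|\le1}|\Psi'(u+\tau h)||h|F^o(x)^{-\beta}$ by an integrable function whenever $h$ is small in $\D$.

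For the domination, split $\Psi'$ into a polynomial part and an exponential part. The polynomial part is bounded by $C(|w|^{\frac{mN}{N-1}-1}+|w|^{p})$ for suitable $p$, with $w=u+\tau h$. Multiplying by $|h|$ and applying H\"older's inequality with respect to the measure $F^o(x)^{-\beta}\dd x$, the product is controlled through Lemma~\ref{lem:weighted}. The exponent $\frac{mN}{N-1}>q\delta$ by \eqref{eq:4}, and $\frac{mN}{N-1}\ge\frac N{N-1}>1$, so that lemma applies. The exponential part is the main obstacle: one needs
\[
\sup_{\|h\|\le\eta}\int \Ph'(\lambda_\beta|u+h|^{\frac N{N-1}})^s F^o(x)^{-\beta}\dd x<\infty
\]
for some $s>1$ close to $1$. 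I would proceed as in the proof of Lemma~\ref{lem:4.2}. Since $\|F(\nabla u)\|_N<1$, we may choose $\eta$ small and $s>1$ so that
\[
s(1+\epsilon)\|F(\nabla(u+h))\|_N^{N/(N-1)}<1 .
\]
Then split $\R^N$ into the set where $|u+h|$ is large and its complement. The first set has finite weighted measure by the $L^q$ bound. On it, the critical exponential with a strict margin is uniformly integrable by the singular subcritical inequality of \cite{ref8} applied to $s(u+h)$, after absorbing the polynomial factors $|t|^{\frac1{N-1}}$ into the margin. Where $|u+h|$ is small, only the polynomial bound is needed. H\"older's inequality with conjugate exponent $s'$, together with Lemma~\ref{lem:weighted} for $|h|^{s'}$ (take $s'$ large so that $s'>q\delta$), then gives
\[
|D\J(u)[h]|\le C\|h\|_{\D},
\]
so $D\J(u)\in\D^*$.

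For continuity, let $u_n\to u$ in $\D$. Pass to a subsequence converging a.e. The same uniform $L^s$ bound on $\Psi'(u_n)$ over a neighbourhood of $u$ gives uniform integrability via Vitali's theorem. Hence $\Psi'(u_n)\to\Psi'(u)$ in the space $L^{s}\cap L^{r}(F^o(x)^{-\beta}\dd x)$, for the exponents used above. Duality with the embedding of Lemma~\ref{lem:weighted} then yields $\|D\J(u_n)-D\J(u)\|_{\D^*}\to0$. Since the limit does not depend on the subsequence, the whole sequence converges. A G\^ateaux derivative that is continuous in operator norm is a Fr\'echet derivative, which proves the lemma. The delicate point throughout is keeping the exponential estimate uniform in $h$. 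That is exactly where the strict inequality $\|F(\nabla u)\|_N<1$ defining $\mathcal U$ is used.
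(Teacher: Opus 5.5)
Your route is the paper's. You separate small and large values of $|u|$. The small-value part is bounded by $|u|^{r-1}$ with $r=\frac{mN}{N-1}$ and paired with variations in $L^r(\mu)$, $\dd\mu=F^{o}(x)^{-\beta}\dd x$, via Lemma~\ref{lem:weighted}. The large-value part is bounded uniformly near $u$ by the subcritical inequality with a margin. Continuity of $D\J$ comes from Vitali's theorem.

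The step that does not go through as written is that last one. Since $\mu(\R^N)=\infty$, a uniform $L^s(\mu)$ bound plus a.e.\ convergence does not give convergence in $L^s(\mu)$: mass can escape to infinity. Vitali's theorem therefore also needs a tightness condition, which you never check. Separately, an $L^s$ bound only yields convergence in $L^{\tilde s}$ for $\tilde s<s$, so the margin must be used to bound in some $L^{s_1}$ with $s_1>s$. The target space is also misstated. The small-value part behaves like $|u|^{r-1}$, which lies in $L^{r'}(\mu)$ but in general not in $L^s(\mu)$ for $s$ near one. Convergence must therefore be proved in $L^s(\mu)+L^{r'}(\mu)$ after a cutoff decomposition, not in $L^s\cap L^r$.

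The repair is the paper's tail argument. Split $\Psi'(u_n)=(1-\chi(u_n))\Psi'(u_n)+\chi(u_n)\Psi'(u_n)$ with $\chi$ vanishing on $[-1,1]$. For $\zeta>\max\{1,q\delta\}$,
\[
 \mu\bigl(\{|u_n|>1\}\setminus W_A\bigr)\le\int_{\R^N\setminus W_A}|u_n|^\zeta\dd\mu ,
\]
which is small uniformly in $n$ once $A$ is large, because $u_n\to u$ in $L^\zeta(\mu)$. H\"older's inequality against the uniform $L^{s_1}$ bound then makes the $L^s$ tails of $\chi(u_n)\Psi'(u_n)$ outside $W_A$ uniformly small. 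Vitali's theorem applies on each $W_A$, where $\mu$ is finite. The small-value part converges in $L^{r'}(\mu)$ by continuity of the Nemytskii operator $L^r(\mu)\to L^{r'}(\mu)$.

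A smaller point of the same kind occurs in your G\^ateaux step. A bound on $\sup_h\int$ is not a pointwise dominator of $\sup_{|\tau|\le1}|\Psi'(u+\tau h)||h|$. Either use $|\Psi'(u+\tau h)|\le\Psi'(|u|+|h|)$ together with $\norm{F(\nabla(|u|+|h|))}_N\le\norm{F(\nabla u)}_N+\norm{F(\nabla h)}_N<1$, or bypass the issue with Fubini in $\J(u+h)-\J(u)=\int_0^1D\J(u+th)[h]\dd t$.
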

\begin{proof}
Fix $u_0$ with $\norm{F(\nabla u_0)}_N<1$ and choose
$\theta\in(\norm{F(\nabla u_0)}_N,1)$. In a sufficiently small
$D^{N,q}$ neighborhood of $u_0$, the gradient norm is at most $\theta$ and the
$L^q$ norm is bounded. Put $r=\frac{mN}{N-1}>\max\{1,q\delta\}$ and
$\dd\mu=F^{o}(x)^{-\beta}\dd x$. Lemma~\ref{lem:weighted} gives a continuous embedding into
$L^r(\dd\mu)$. On $|u|\le1$, the derivative of $\Ph(\lambda_\beta|u|^{\frac{N}{N-1}})$
is bounded by $C|u|^{r-1}$, so its pairing with a variation is controlled
by H\"older's inequality in this space.

To treat $|u|>1$, choose $1<t<t_1$ such that $t_1\theta^{\frac{N}{N-1}}<1$ and
$t'>q\delta$. Define
\[
 d(s):=\frac{N\lambda_\beta}{N-1}\,\Ph'(\lambda_\beta|s|^{\frac{N}{N-1}})|s|^{\frac{2-N}{N-1}}s\quad(s\ne0),
 \qquad d(0):=0.
\]
Let $\chi\in C^\infty(\R;[0,1])$ be even, with $\chi=0$ on $[-1,1]$ and
$\chi=1$ for $|s|\ge2$, and write
\[
 d_0=(1-\chi)d,\qquad d_1=\chi d.
\]
The strict inequality $t_1\theta^{\frac{N}{N-1}}<1$ leaves room to absorb the polynomial
factors in $|d_1(u)|^{t_1}$ into the exponential. The subcritical estimate
therefore bounds $d_1(u)$ uniformly in $L^{t_1}(\dd\mu)$ throughout this
neighborhood. The weighted power estimate controls variations in
$L^{t'}(\dd\mu)$.

We verify continuity on the whole space. Let $u_n\to u$ in $D^{N,q}$
within the chosen neighborhood, and fix $\zeta>\max\{1,q\delta\}$.
The weighted embedding gives $u_n\to u$ in $L^\zeta(\dd\mu)$. For $A>0$, let
\[
 E_{n,A}:=(\R^N\setminus W_A)\cap\{|u_n|>1\}.
\]
Then
\[
 \mu(E_{n,A})\le\int_{\R^N\setminus W_A}|u_n|^\zeta\dd\mu,\qquad
 \lim_{A\to\infty}\limsup_{n\to\infty}\mu(E_{n,A})=0.
\]
Since $d_1(u_n)$ vanishes on $\{|u_n|\le1\}$, H\"older's inequality gives
\[
 \int_{\R^N\setminus W_A}|d_1(u_n)|^t\dd\mu
 \le\norm{d_1(u_n)}_{L^{t_1}(\dd\mu)}^t\,
 \mu(E_{n,A})^{1-t/t_1}.
\]
Taking the limsup as $n\to\infty$ and then letting $A\to\infty$ makes the
right-hand side tend to zero; the same tail estimate holds for $u$.
On each $W_A$, the measure $\mu$ is finite. Continuity of $d_1$, convergence
in measure, and the uniform $L^{t_1}$ bound imply $d_1(u_n)\to d_1(u)$ in
$L^t(W_A,\dd\mu)$ by Vitali's theorem. Hence the convergence holds in
$L^t(\R^N,\dd\mu)$.

For the other term, $|d_0(s)|\le C|s|^{r-1}$ and strong convergence in
$L^r(\dd\mu)$ imply $d_0(u_n)\to d_0(u)$ in $L^{r'}(\dd\mu)$, by continuity
of the corresponding Nemytskii operator. Pairing the two terms with
variations in $L^r(\dd\mu)$ and $L^{t'}(\dd\mu)$ proves continuity in the
dual of $D^{N,q}$. The fundamental theorem of calculus now gives
$\J\in C^1$ near $u_0$, with
\[
 D\J(u)[h]=\frac{N\lambda_\beta}{N-1}\int_{\R^N}
 \Ph'(\lambda_\beta|u|^{\frac{N}{N-1}})|u|^{\frac{2-N}{N-1}}u\,h\dd\mu.
\]

Since $u_0$ was arbitrary, the assertion holds on $\mathcal{U}$.
\end{proof}

\section*{Declarations}
\addcontentsline{toc}{section}{Declarations}
\noindent\textit{Competing interests.} The authors declare that they have
no competing interests.

\medskip\noindent\textit{Data availability.} No datasets were generated or
analyzed in the present study.


\begin{thebibliography}{22}
\small
\raggedright
\setlength{\itemsep}{3pt}
\addcontentsline{toc}{section}{References}
\bibitem{refAdachiTanaka2000}
S.~Adachi, K.~Tanaka, Trudinger type inequalities in $\R^N$ and their best
exponents, \textit{Proc. Amer. Math. Soc.} \textbf{128} (7) (2000) 2051--2057.
\href{https://doi.org/10.1090/S0002-9939-99-05180-1}{doi:10.1090/S0002-9939-99-05180-1}.

\bibitem{refAdimurthiSandeep2007}
Adimurthi, K.~Sandeep, A singular Moser--Trudinger embedding and its
applications, \textit{NoDEA Nonlinear Differential Equations Appl.}
\textbf{13} (5--6) (2007) 585--603.
\href{https://doi.org/10.1007/s00030-006-4025-9}{doi:10.1007/s00030-006-4025-9}.

\bibitem{ref2}
Adimurthi, Y.~Yang, An interpolation of Hardy inequality and Trudinger--Moser
inequality in $\R^N$ and its applications,
\textit{Int. Math. Res. Not. IMRN} 2010 (13) (2010) 2394--2426.
\href{https://doi.org/10.1093/imrn/rnp194}{doi:10.1093/imrn/rnp194}.

\bibitem{ref4}
A.~Alvino, V.~Ferone, G.~Trombetti, P.-L.~Lions, Convex symmetrization and
applications, \textit{Ann. Inst. H. Poincar\'e C Anal. Non Lin\'eaire}
\textbf{14} (2) (1997) 275--293.
\href{https://doi.org/10.1016/S0294-1449(97)80147-3}{doi:10.1016/S0294-1449(97)80147-3}.

\bibitem{refCarlesonChang1986}
L.~Carleson, S.-Y.~A.~Chang, On the existence of an extremal function for
an inequality of J.~Moser, \textit{Bull. Sci. Math. (2)}
\textbf{110} (2) (1986) 113--127.

\bibitem{refFlucher1992}
M.~Flucher, Extremal functions for the Trudinger--Moser inequality in
2 dimensions, \textit{Comment. Math. Helv.} \textbf{67} (1992) 471--497.
\href{https://doi.org/10.1007/BF02566514}{doi:10.1007/BF02566514}.

\bibitem{ref8}
K.~Guo, Y.~Liu, Sharp anisotropic singular Trudinger--Moser inequalities
in the entire space, \textit{Calc. Var. Partial Differential Equations}
\textbf{63} (2024) 82.
\href{https://doi.org/10.1007/s00526-024-02700-0}{doi:10.1007/s00526-024-02700-0}.

\bibitem{ref9}
K.~Guo, Y.~Liu, Existence of extremal functions and Wulff symmetry for
anisotropic Trudinger--Moser inequalities,
\textit{J. Geom. Anal.} \textbf{36} (2026) 258.
\href{https://doi.org/10.1007/s12220-026-02506-w}{doi:10.1007/s12220-026-02506-w}.

\bibitem{ref3}
X.~Li, Y.~Yang, Extremal functions for singular Trudinger--Moser inequalities
in the entire Euclidean space, \textit{J. Differential Equations}
\textbf{264} (8) (2018) 4901--4943.
\href{https://doi.org/10.1016/j.jde.2017.12.028}{doi:10.1016/j.jde.2017.12.028}.

\bibitem{refLiRuf2008}
Y.~Li, B.~Ruf, A sharp Trudinger--Moser type inequality for unbounded
domains in $\R^n$, \textit{Indiana Univ. Math. J.}
\textbf{57} (1) (2008) 451--480.
\href{https://doi.org/10.1512/iumj.2008.57.3137}{doi:10.1512/iumj.2008.57.3137}.

\bibitem{ref12}
J.~Liang, Z.-C.~Xiong, On the thresholds of the critical singular anisotropic
Trudinger--Moser inequality, \textit{Forum Math.} \textbf{38} (6) (2026)
1919--1958.
\href{https://doi.org/10.1515/forum-2025-0226}{doi:10.1515/forum-2025-0226}.

\bibitem{ref7}
Y.~Liu, Concentration--compactness principle of singular Trudinger--Moser
inequality involving $N$-Finsler--Laplacian operator,
\textit{Int. J. Math.} \textbf{31} (11) (2020) 2050085.
\href{https://doi.org/10.1142/S0129167X20500858}{doi:10.1142/S0129167X20500858}.

\bibitem{refLiu2022}
Y.~Liu, Anisotropic Trudinger--Moser inequalities associated with the exact
growth in $\R^N$ and its maximizers, \textit{Math. Ann.}
\textbf{383} (3--4) (2022) 921--941.
\href{https://doi.org/10.1007/s00208-021-02194-7}{doi:10.1007/s00208-021-02194-7}.

\bibitem{ref10}
G.~Lu, Y.~Shen, J.~Xue, M.~Zhu, Weighted anisotropic isoperimetric
inequalities and existence of extremals for singular anisotropic
Trudinger--Moser inequalities, \textit{Adv. Math.} \textbf{458} (2024) 109949.
\href{https://doi.org/10.1016/j.aim.2024.109949}{doi:10.1016/j.aim.2024.109949}.

\bibitem{refMasmoudiSani2015}
N.~Masmoudi, F.~Sani, Trudinger--Moser inequalities with the exact growth
condition in $\R^N$ and applications,
\textit{Comm. Partial Differential Equations} \textbf{40} (8) (2015) 1408--1440.
\href{https://doi.org/10.1080/03605302.2015.1026775}{doi:10.1080/03605302.2015.1026775}.

\bibitem{ref1}
J.~Moser, A sharp form of an inequality by N.~Trudinger,
\textit{Indiana Univ. Math. J.} \textbf{20} (1970--1971) 1077--1092.
\href{https://doi.org/10.1512/iumj.1971.20.20101}{doi:10.1512/iumj.1971.20.20101}.

\bibitem{ref11}
V.~H.~Nguyen, The thresholds of the existence of maximizers for the critical
sharp singular Moser--Trudinger inequality under constraints,
\textit{Math. Ann.} \textbf{380} (3--4) (2021) 1933--1958.
\href{https://doi.org/10.1007/s00208-020-02010-8}{doi:10.1007/s00208-020-02010-8}.

\bibitem{refRuf2005}
B.~Ruf, A sharp Trudinger--Moser type inequality for unbounded domains
in $\R^2$, \textit{J. Funct. Anal.} \textbf{219} (2) (2005) 340--367.
\href{https://doi.org/10.1016/j.jfa.2004.06.013}{doi:10.1016/j.jfa.2004.06.013}.

\bibitem{refTrudinger1967}
N.~S.~Trudinger, On imbeddings into Orlicz spaces and some applications,
\textit{J. Math. Mech.} \textbf{17} (5) (1967) 473--483.
\href{https://doi.org/10.1512/iumj.1968.17.17028}{doi:10.1512/iumj.1968.17.17028}.

\bibitem{refWangXia2012}
G.~Wang, C.~Xia, Blow-up analysis of a Finsler--Liouville equation in two
dimensions, \textit{J. Differential Equations} \textbf{252} (2) (2012) 1668--1700.
\href{https://doi.org/10.1016/j.jde.2011.08.001}{doi:10.1016/j.jde.2011.08.001}.

\bibitem{ref5}
C.~Zhou, C.~Zhou, Moser--Trudinger inequality involving the anisotropic
Dirichlet norm $(\int_\Omega F^N(\nabla u)\dd x)^{1/N}$ on $W_0^{1,N}(\Omega)$,
\textit{J. Funct. Anal.} \textbf{276} (9) (2019) 2901--2935.
\href{https://doi.org/10.1016/j.jfa.2018.12.001}{doi:10.1016/j.jfa.2018.12.001}.

\bibitem{ref6}
C.~Zhou, C.~Zhou, On the anisotropic Moser--Trudinger inequality for
unbounded domains in $\R^n$, \textit{Discrete Contin. Dyn. Syst.}
\textbf{40} (2) (2020) 847--881.
\href{https://doi.org/10.3934/dcds.2020064}{doi:10.3934/dcds.2020064}.

\end{thebibliography}
\end{document}